\newtheorem{theorem}{Theorem}[section]
\newtheorem{corollary}{Corollary}
\newtheorem{lemma}[theorem]{Lemma}
\newtheorem{proposition}{Proposition}
\theoremstyle{definition}
\newtheorem{definition}[theorem]{Definition}
\newtheorem{remark}{Remark}
\newtheorem{assumption}[theorem]{Assumption}
\newtheorem{example}[theorem]{Example}
\DeclareMathOperator{\Lap}{\mathcal{L}}
\DeclareMathOperator{\Four}{\mathscr{F}}
\title[Second order dispersion relations] 
      {Long-time behavior of second order linearized Vlasov-Poisson equations near a homogeneous equilibrium.}
\author[Joackim Bernier$^*$ and Michel Mehrenberger]{}
\subjclass{Primary: 35Q83, 65Z05; Secondary: 44A10.}
 \keywords{Dispersion relations, Best frequency, Landau damping, Vlasov-Poisson, Laplace transform.}
 \email{joackim.bernier@univ-rennes1.fr}
 \email{michel.mehrenberger@univ-amu.fr}
\thanks{This work was granted access to the HPC resources of Aix-Marseille Universit\'e financed by the project Equip@Meso (ANR-10-EQPX-29-01) of the program "œInvestissements d'Avenir" supervised by the Agence Nationale de la Recherche. This work has been carried out within the framework of the EUROfusion Consortium and has received funding from the Euratom research and training programme 2014-2018 and 2019-2020 under grant agreement No 633053. The views and opinions expressed herein do not necessarily reflect those of the European Commission.}
\thanks{$^*$ Corresponding author: Joackim Bernier}
\begin{document}
\maketitle

\centerline{\scshape Joackim Bernier$^*$}
\medskip
{\footnotesize
 \centerline{Univ Rennes, INRIA, CNRS, IRMAR - UMR 6625, F-35000 Rennes, France}
} 

\medskip

\centerline{\scshape Michel Mehrenberger}
\medskip
{\footnotesize
 \centerline{ Aix Marseille Univ, CNRS, Centrale Marseille, I2M, Marseille, France}
}

\bigskip

 \centerline{(Communicated by the associate editor name)}

\begin{abstract}
The asymptotic behavior of the solutions of the second order linearized Vlasov-Poisson system around homogeneous equilibria is derived. 
It provides a fine description of some nonlinear and multidimensional phenomena such as the existence of \emph{Best frequencies}. 
Numerical results for the $1D\times1D$ and $2D\times2D$ Vlasov-Poisson system illustrate the effectiveness of this approach.
\end{abstract}


\section{Introduction}
We consider 
potentials $\phi=\phi(t,x):\mathbb{R}\times{\mathbb{T}_d}  \to \mathbb{R}$ and
distribution functions $f=f(t,x,v):\mathbb{R}\times{\mathbb{T}_d}  \times \mathbb{R}^d \to \mathbb{R}$  satisfying the Vlasov-Poisson system
\begin{equation}
\label{VP_eq}
\tag{VP}
\left\{ \begin{array}{llll} 
\partial_t f + v\cdot\nabla_x f- \nabla_x \phi\cdot\nabla_v f =0 \\
\Delta_x \phi =n(f)- \int_{\mathbb{R}^d} f \textrm{d}v \\
f(t=0) = f_0.
\end{array} \right.
\end{equation}
Here periodic boundary conditions being used, 
${\mathbb{T}_d} $ is a $d$ dimensional torus: there exist $L_1, \dots, L_d>0$ such that
${\mathbb{T}_d} = (\mathbb{R}/L_1 \mathbb{Z})\times \dots \times (\mathbb{R}/L_d \mathbb{Z}).$
Furthermore, doing an assumption of neutrality, we only consider solutions of \eqref{VP_eq} such that
\[ n(f) = \frac{1}{L_1 \dots L_d}\iint_{{\mathbb{T}_d} \times \mathbb{R}^d} f \textrm{d}x \textrm{d}v.\]

In this paper, we aim at exhibiting nonlinear and multidimensional phenomena of solutions of \eqref{VP_eq}, pursuing a first preliminary work \cite{us} on this subject.
 Beyond their physical interest, these phenomena can be relevant to evaluate the performances and the qualitative properties of numerical methods.

 Since the very first 
 developments of numerical methods for solving \ref{VP_eq} (we refer to \cite{VlasovBookSonnen}, for a review; the literature is particularly huge in $1D\times 1D$ and we can mention \cite{shoucrigagne}, 
 as one of the earliest works in $2D\times 2D$),  
the numerical solutions are compared to the solutions of the Vlasov-Poisson system linearized around a homogeneous equilibria $f^{eq}\equiv f^{eq}(v)$\footnote{it can be noticed that every function depending only on $v$ is an equilibrium of \eqref{VP_eq}. }. It consists in looking for solutions of \eqref{VP_eq} of the type
$$
\left\{  \begin{array}{llllll} f &=& f^{eq} &+& \varepsilon g\\
								\phi &=& 0 &+& \varepsilon \psi. \end{array} \right.
$$
Neglecting second order terms, $g$ is formally a solution of
\begin{equation}
\label{VPL}
\tag{VPL}
\left\{ \begin{array}{llll}
\partial_t g + v\cdot\nabla_x g- \nabla_x \psi\cdot\nabla_v f^{eq} =0, \\
\displaystyle \Delta_x \psi + \int_{\mathbb{R}^d} g \ \textrm{d}v =0,\\
g(t=0) = g_0.
\end{array} \right.
\end{equation}
This equation being linear and homogeneous, it is natural to try to solve it realizing a Fourier transform we respect to the variable $x$. Thus, we get
\begin{equation}
\label{VPLF}
\tag{VPLF}
\left\{ \begin{array}{llll}
\partial_t \widehat{g} + i (v\cdot k)  \widehat{g}-  i \widehat{\psi}(k \cdot\nabla_v) f^{eq} =0, \\
\displaystyle -|k|^2  \widehat{\psi} + \int_{\mathbb{R}^d}  \widehat{g} \ \textrm{d}v =0,\\
 \widehat{g}(t=0) =  \widehat{g}_0,
\end{array} \right.
\end{equation}
where $k\in \widehat{\mathbb{T}}_d =  (2\pi/L_1) \mathbb{Z}\times \dots \times  (2\pi/L_d) \mathbb{Z}$ and the Fourier transform with respect to the space variable is defined for $u\in L^1({\mathbb{T}_d})$ and $k\in \widehat{\mathbb{T}}_d$ by
$$ \widehat{u}(k) =  \left(\prod_{j=1}^d L_j \right)^{-1} \int_{{\mathbb{T}_d}} u(x) e^{-ik\cdot x} \textrm{d}x. $$
It is relevant to notice on \eqref{VPLF} that there is no \emph{energy exchange} between space modes at the linear level.  In other words, if $g$ is a solution of \eqref{VPL} such that $g_0 \equiv\widehat{g_0}(v) e^{ik\cdot x}$ then it is of the form 
 $g(t,x,v) = \widehat{g}(t,v) e^{ik\cdot x}$. As a consequence, the linear analysis is not well suited to 
 describe multidimensional phenomena that could be confronted with numerical simulations.

Since \eqref{VPLF} is linear and autonomous, it is natural to solve it with the \emph{Laplace transform}. This transform is defined for functions $u:\mathbb{R}_+^{*}\to \mathbb{C}$ such that there exists $\lambda\in \mathbb{R}$ satisfying $ue^{-\lambda t}\in L^{\infty}(\mathbb{R}_+^*)$ and $z\in \mathbb{C}$ such that $\Im z>\lambda$ by
\[\Lap [u](z)  =  \int_{0}^{\infty} u(t) e^{izt} \textrm{d}t.  \]
Thus, it can be proven that solutions of \eqref{VPLF} are given by
\begin{equation}
\label{intro_art_duham}
g(t,x,v) = \sum_{k \in \widehat{{\mathbb{T}_d}}} e^{ik\cdot(x-vt)} \widehat{g_0}(k,v) + i \int_{0}^t e^{i k\cdot(x-v(t-s))} \widehat{\psi}(s,k) k\cdot\nabla_v f^{eq}(v) \textrm{d}s,
\end{equation}
and for $\Im z$ large enough
\begin{equation}
\label{intro_art_rel_disp_lin}
\Lap \left[ \widehat{\psi}(t,k)\right] (z) = \frac{N_k(z)}{D_k(z)} =:M_k(z).
\end{equation}
where $N_k$ and $D_k$ are holomorphic functions defined when $\Im z$ is large enough by
\begin{equation}
\label{intro_defNkDk}
N_k(z) = -\frac{i}{|k|^2}  \int_{\mathbb{R}^d} \frac{\widehat{g_0}(k,v)}{v\cdot k-z}  \textrm{d}v \quad  \textrm{ and }  \quad D_k(z) = 1- \frac1{|k|^2} \int \frac{k\cdot \nabla_v f^{eq}(v)}{v\cdot k-z} \textrm{d}v.
\end{equation}
Thus to get a solution $g$ of \eqref{VPL} by \eqref{intro_art_duham} we just have to solve the equation \eqref{intro_art_rel_disp_lin} (called \emph{dispersion relation}) determining an inverse Laplace transform. 

Up to some strong assumptions on $f^{eq}$ and $\widehat{g_0}(k)$ (precised later), it can be proven that $N_k$ and $D_k$ are entire functions and that, for all $\lambda\in \mathbb{R}$, the number of zeros of $D_k$ with an imaginary part larger than $\lambda$ is finite (see Remark \ref{remark_NkDk}). Thus, using the formula
\begin{equation}
\label{intro_art_pas_fou}
\Lap[t^m e^{-i\omega t}](z) =\frac{i^{m+1} m!}{(z-\omega)^{m +1}}, \ \omega \in \mathbb{C}, \ m\in \mathbb{N},
\end{equation}
and realizing precise estimates of remainder terms, we can prove that \eqref{intro_art_rel_disp_lin} has an analytic solution $\widehat{\psi}$ whose analytic expansion is given, for all $\lambda\in \mathbb{R}$, by
\begin{equation}
\label{intro_art_DA_psi}
\widehat{\psi}(t,k) = \sum_{\substack{D_k(\omega)=0\\ \Im \omega\geq \lambda}} P_{\omega,k}(t) e^{-i\omega t} + \mathcal{O}(e^{\lambda t}), 
\end{equation}
where $P_{\omega,k}$ is the polynomial such that $\displaystyle M_k(z) \mathop{=}_{z\to \omega}  \Lap [P_{\omega,k}(t) e^{-i\omega t}] (z) + \mathcal{O}(1)$ is the expansion of $M_k(z)$ in $\omega$.

Such an analysis was first realized by Landau \cite{Landau46}, in 1946. It has been done rigorously and generalized in 1986 by Degond \cite{MR825714}. It gave a partial explanation to the phenomenon of \emph{Landau damping}. This latter corresponds to the dynamic of \eqref{VP_eq} when for all $k\in \widehat{{\mathbb{T}_d}}$, $D_k(z)$ does not vanish if $\Im z \geq 0$. In this case, the electric potential goes exponentially fast to zero as $t$ goes to $+\infty$. In 2011, Mouhot and Villani proved the existence of this phenomenon for the nonlinear Vlasov-Poisson equation \eqref{VP_eq} in  \cite{MouhotVillani}.

As we have just seen, due to the absence of energy exchange between the spaces modes at the linear level, the linearization is not relevant to explain really multidimensional phenomena. Furthermore, of course, it can not explain nonlinear phenomena. 
This motivates thus the study of  the dynamic of the \emph{second order term} in the expansion of $f$ as powers of $\varepsilon$. More precisely, we look for a solution of \eqref{VP_eq} under the form
$$ \left\{ \begin{array}{lllllllll} f & = & f^{eq} & +  & \varepsilon g & + & \varepsilon^2 h & + &  o(\varepsilon^2), \\
								  \phi & = & 0 & + & \varepsilon \psi & + & \varepsilon^2 \mu & +  &o(\varepsilon^2),
\end{array} \right. $$
where $h(t=0) \equiv 0$. Neglecting the third order terms, it can be proven formally that $(h,v)$ is a solution of
\begin{equation}
\label{VPL2}
\tag{VPL2}
\left\{ \begin{array}{llll}
\partial_t h + v\cdot\nabla_x h- \nabla_x \mu\cdot\nabla_v f^{eq} =  \nabla_x \psi\cdot\nabla_v g, \\
\displaystyle \Delta_x \mu + \int_{\mathbb{R}^d} h \ \textrm{d}v =0,\\
h(t=0) = 0.
\end{array} \right.
\end{equation}
We recognize the linearized Vlasov-Poisson equations, with an initial condition equal to zero but with a source term. 
In that case, we refer to Denavit \cite{denavit}, for one of the first works on the subject, in 1965. Different second order oscillations appear and have been studied by physicists (see for example \cite{O2_phy} and references therein; 
there are many references especially from the 1960s and 1970s).  Our aim is to make here a rigorous mathematical study of the asymptotical behavior of the solutions of these equations, which has, to the best of our knowledge, 
not already been performed.

 This expansion in power of $\varepsilon$ is quite natural because if we assume that $f$, the solution of \eqref{VP_eq}, is a smooth function of $\varepsilon$ then $g$ and $h$ are just its first and second Taylor coefficients around $\varepsilon=0$. We admit that there is a priori no reason that this second order approximation provides a more accurate approximation of $f$ for long times than the usual first order approximation. Nevertheless, numerical results suggest that this second order approximation is relevant for long times both if the equilibrium is stable or not (see \cite{us}, \cite{O2_phy} and Section \ref{sec:5}). Furthermore, to the best of our knowledge, the asymptotic expansion of the solutions of \eqref{VPL2} we realize in this paper (see Theorem \ref{The_th}) is much more precise than what is known for the asymptotic behavior of the solution of \eqref{VP_eq} (see \cite{MR3489904},\cite{MouhotVillani}).

%

Here, as for the linear case, we solve \eqref{VPL2} using a Laplace transform for the time variable and a Fourier transform for the space variable. More precisely, some calculations prove that  \eqref{VPL2} is equivalent to 
\begin{multline*}
\label{intro_art_g_Duhamel_2}
h(t,x,v) = \sum_{k \in \widehat{{\mathbb{T}_d}}}  i \int_{0}^t e^{i k\cdot(x-v(t-s))} \widehat{\mu}(s,k) k\cdot\nabla_v f^{eq}(v) \textrm{d}s \\
+ \int_{0}^t \widehat{\nabla_x \psi\cdot\nabla_v g}(s,k,v) e^{i k\cdot(x-v(t-s))}   \textrm{d}s ,
\end{multline*}
and when $\Im z$ is large enough 
\begin{equation}
\label{intro_art_rel_disp_lin2}
\Lap \left[ \widehat{\mu}(t,k)\right] (z) = \frac{  \mathcal{N}_{k}(z)}{D_k(z)} =:\mathcal{M}_k(z),
\end{equation}
where $D_k$ is given by \eqref{intro_defNkDk} and $\mathcal{N}_k$ is a meromorphic function on $\mathbb{C}$ explicitly known.

As previously, there is just to invert a Laplace transform to solve \eqref{VPL2}. As for the linear case, a precise study of $\mathcal{M}_k$ and its poles gives a solution $\mu$ of \eqref{intro_art_rel_disp_lin2} and an asymptotic expansion of the form 
\begin{equation}
\label{intro_art_DA_mu}
\forall \lambda\in \mathbb{R}, \ \widehat{\mu}(t,k) =  \sum_{\substack{\mathcal{M}_k(\omega)=\infty\\ \Im \omega\geq \lambda}} Q_{\omega,k}(t) e^{-i\omega t} + \mathcal{O}(e^{\lambda t}).
\end{equation}
where $Q_{\omega,k}$ is the polynomial such that $\displaystyle \mathcal{M}_k(z) \mathop{=}_{z\to \omega}  \Lap [Q_{\omega,k}(t) e^{-i\omega t}] (z) + \mathcal{O}(1)$.

The poles of $\mathcal{M}_k$ are of two kinds: they can be zeros of $D_k$ (generating the same frequencies as at the first order) or poles of $\mathcal{N}_{k}$. The study of the poles is technical because $\mathcal{N}_{k}$ is defined from the solution of \eqref{VPL}. However, the asymptotic expansion of $\psi$ (see \eqref{intro_art_DA_psi}) enables a decomposition of
$\mathcal{N}_{k}$ in more elementary terms whose poles can be determined.
 
In order to give an intuition of these poles, we consider a term that is very representative\footnote{but slightly simplified.} of this decomposition:
 $$
\mathcal{N}_{k}^{(rep)}(z)=   \Lap \left[ F^{(rep)}_k(t) \right] (z)
$$
where
\begin{equation}
\label{intro_Frep}
F^{(rep)}_k(t) =    e^{-i (\omega_1 t + \omega_2 t) }  \iint_{0\leq \tau\leq s\leq t}  e^{i (\omega_1 \tau + \omega_2 s) }   \mathscr{F}[f^{eq}] ( \tau k_1 + s k_2) \ \textrm{d}s \ {\rm d\tau}
\end{equation}
with $k_1,k_2 \in \widehat{{\mathbb{T}_d}}\setminus{0}$ satisfy $k_1+k_2 = k$, $\omega_1,\omega_2 \in \mathbb{C}$ are such that $D_{k_1}(\omega_1) = D_{k_2}(\omega_2) = 0$  and $ \mathscr{F}[f^{eq}]$ is the Fourier transform of $f^{eq}$. The later being defined for $u\in L^1(\mathbb{R}^d)$ and $\xi \in \mathbb{R}^d$ by
$$
 \Four [u](\xi)  = \int_{\mathbb{R}^d} u(v) e^{-iv\cdot \xi} \textrm{d}v.  
$$

Since $\mathcal{N}_{k}^{(rep)}$ is the Laplace transform of $F^{(rep)}_k(t)$, it can be proven that its poles are given by the asymptotic expansion of $F^{(rep)}_k(t)$ with the formula \eqref{intro_art_pas_fou}. As it is suggested by the formula \eqref{intro_Frep}, the behavior of this later is quite different if the set of the points $(\tau,s)$ such that $\tau k_1 + s k_2=0$ is a line segment (\emph{resonant case}) or a point (\emph{non-resonant case}).

In the non-resonant case, there exists a constant $c>0$ such that
$$0\leq \tau\leq s\leq t, \ |\tau k_1 + s k_2| \geq c s.$$ 
So, assuming that $f^{eq}$ is regular enough so that $\mathscr{F}[f^{eq}](\xi)$ decreases faster than any exponential as $|\xi|$ goes to $+\infty$ (for example like a Gaussian), we can prove that the integral in \eqref{intro_Frep} converges faster than any exponential as $t$ goes to $+\infty$. As a consequence, we get a constant $a\in \mathbb{C}$ such that 
$$
\forall \lambda\in \mathbb{R}, \ F^{(rep)}_k(t) = a e^{-i (\omega_1 t + \omega_2 t) } + \mathcal{O}(e^{\lambda t}).
$$ 

In the resonant case, there exists $\gamma\in (0,1)$ such that
$$  k_2 = - \gamma k_1.$$
Realizing a natural change of coordinates in \eqref{intro_Frep}, we get 
$$
F^{(rep)}_k(t) =    \int_0^t \int_{-\gamma s}^{(1-\gamma)s}   e^{-i (\omega_1 (t-\tau -\gamma s)  + \omega_2 (t-s)  ) }   \mathscr{F}[f^{eq}] ( \tau k_1 )  \ \textrm{d}\tau \ \textrm{d}s.
$$
Thus, assuming that $f^{eq}$ is regular enough so that $\mathscr{F}[f^{eq}](\xi)$ decrease faster than any exponential as $|\xi|$ goes to $+\infty$, we have
\begin{equation*}
\begin{split}
F^{(rep)}_k(t)  =&  \left(\int_0^t  e^{-i (\omega_1 (t -\gamma s)  + \omega_2 (t-s)  ) } \textrm{d}s\right) \left( \int_{\mathbb{R}}  e^{i\omega_1 \tau} \mathscr{F}[f^{eq}] ( \tau k_1 ) \textrm{d}\tau   \right)\\
&-  e^{-it(\omega_1+\omega_2)} \int_0^\infty \int_{\substack{ \tau\geq (1-\gamma)s \\ \textrm{or } \tau<-\gamma s}} e^{i (\omega_1 (\tau +\gamma s)  + \omega_2 s  ) }   \mathscr{F}[f^{eq}] ( \tau k_1 )  \ \textrm{d}\tau \ \textrm{d}s \\
&+e^{-it(\omega_1+\omega_2)} \int_t^\infty \int_{\substack{ \tau\geq (1-\gamma)s \\ \textrm{or } \tau<-\gamma s}} e^{i (\omega_1 (\tau +\gamma s)  + \omega_2 s  ) }   \mathscr{F}[f^{eq}] ( \tau k_1 )  \ \textrm{d}\tau \ \textrm{d}s,
\end{split}
\end{equation*}
and we can prove that the third term decreases faster than any exponential. Thus, this decomposition provides the following asymptotic expansion 
$$
\forall \lambda\in \mathbb{R}, \ F^{(rep)}_k(t) = a e^{-i t (\omega_1+\omega_2)} + b e^{-i t \omega_b} + \mathcal{O}(e^{\lambda t}),
$$ 
where $a,b\in \mathbb{C}$ and $\omega_b =  (1-\gamma) \omega_1  = (|k|/|k_1|) \omega_1$ is the \emph{Best frequency} (according to \cite{O2_phy}).

\medskip

As suggested by this sketch of proof, we can prove that $\mathcal{M}_k$ have three kinds of poles. More precisely, if $\omega$ is a pole of $\mathcal{M}_k$ it satisfies one of the following conditions
\begin{enumerate}[(I)]
\item $\omega$ is a zero of $D_k$,
\item $\omega = \omega_1+\omega_2$ where $D_{k_1}(\omega_1) = D_{k_2}(\omega_2)=0$ and $k_1+k_2=k$,
\item $\omega = (|k|/|k_1|) \omega_1$ where $D_{k_1}(\omega_1) =0$ and there exists $\gamma\in (0,1)$ such that $k = \gamma k_1$.
\end{enumerate}
We recall that these poles drive the asymptotic behavior of $\widehat{\mu}(k)$ through formula \eqref{intro_art_DA_mu}.
The frequencies (I) and (II) have already been identified in our preliminary work on this subject \cite{us}, but not the frequency (III). We emphasize that all the three type of frequencies are listed in \cite{O2_phy}, 
which makes our analysis coherent with the physics litterature.

\medskip

To conclude this presentation, we are going to state a precise theorem giving the asymptotic behavior of the solutions of \eqref{VPL2}. To this end, we need to introduce some notations.

\begin{definition}
Let $\mathscr{E}(\mathbb{R}^d)$ be the subspace of the Schwartz space $\mathscr{S}(\mathbb{R}^d)$, of functions $f$, whose Fourier transform, $\Four f$, extends to an entire function on $\mathbb{C}^d$ and such that 
\begin{equation}
\label{def_Erd} \exists \alpha\in (0,\frac{\pi}2), \forall \beta \in (0,\alpha),\forall \lambda\in \mathbb{R}, \ \sup_{x\in \mathbb{R}^d} \sup_{\theta \in (-\beta,\beta)} e^{\lambda |x|}|\Four f(e^{i\theta}x)| 
<\infty,
\end{equation}
where $|\cdot|$ denotes the canonical Hermitian norm of $\mathbb{C}^d$.
\end{definition}

\begin{remark} Most of our results require that $f^{eq} \in \mathscr{E}(\mathbb{R}^d)$ and $v\mapsto \widehat{g_0}(k,v) \in \mathscr{E}(\mathbb{R}^d)$. This assumption is probably not optimal but it is crucial in our proof in order to invert easily some Laplace transforms (see Theorem \ref{theo_anal_rep} and Lemma \ref{Hilb_to_Four}). Furthermore the space $\mathscr{E}(\mathbb{R}^d)$ contains most of the usual functions used in Vlasov-Poisson simulations. For example, the Maxwellian functions belong to this space. Appendix \ref{Appendix1} provides many examples and details about this space. 
\end{remark}

\begin{remark}
\label{remark_NkDk} Assuming that $f^{eq} \in \mathscr{E}(\mathbb{R}^d)$ and $v\mapsto \widehat{g_0}(k,v) \in \mathscr{E}(\mathbb{R}^d)$, $D_k$ and $N_k$ are entire functions and for all $\lambda\in \mathbb{R}$, the number of zeros of $D_k$ with an imaginary part larger than $\lambda$ is finite (proof will be given in Corollary \ref{cor_Dk_cool} and Proposition \ref{Nk_nice}). Appendix \ref{App:Computation of the zeros} provides an algorithm to computate the zeros of $D_k$.
\end{remark}

\begin{definition} If $k\in \widehat{{\mathbb{T}_d}}$, $n_{k,\omega}$ denotes the multiplicity of $\omega$ as zero of $D_k$, i.e.
$$
 n_{k,\omega} = \max \{ m \in \mathbb{N} \ | \ \forall \ell < m, \ D_k^{(\ell)}(\omega)=0  \}. 
 $$
\end{definition}

Most of the result of this paper will require that $g_0$ is supported on a finite number of spatial modes whose set is denoted $K\subset \widehat{{\mathbb{T}_d}} \setminus \{ 0 \}$. More precisely, they require the following assumption
 \begin{assumption} $\empty$
 \label{assump}
 There exists $K$, a finite part of $\widehat{{\mathbb{T}_d}} \setminus \{ 0 \}$ such that 
\[ \forall x\in{\mathbb{T}_d}, \ g_0(x,v) = \sum_{k\in K} e^{ik\cdot x} \widehat{g_0}(k,v), \ \textrm{ with } v\mapsto\widehat{g_0}(k,v)\in \mathscr{E}(\mathbb{R}^d).\]
 \end{assumption}
 This assumption seems clearly not optimal but it is general enough to exhibit the relevant phenomena and it corresponds to the usual initial data used for numerical simulations. Furthermore, it simplifies most of the proof avoiding several problems of convergences.
 
\medskip

We can now state the main result of this paper: the following theorem proves the existence of smooth solutions of \eqref{VPL} and \eqref{VPL2} and describes their asymptotic behavior.
\begin{theorem} 
\label{The_th}
Let  $f^{eq} \in \mathscr{E}(\mathbb{R}^d)$ and $g_0$ be a function satisfying Assumption \ref{assump}. Then there exist  two $C^{\infty}$ functions $\psi,\mu:\mathbb{R}_{+}^*\times{\mathbb{T}_d} \to \mathbb{R}$ and two continuous functions $g,h:\mathbb{R}_{+} \times{\mathbb{T}_d} \times \mathbb{R}^d\to \mathbb{R}$, $C^{\infty}$ on $\mathbb{R}_{+}^* \times{\mathbb{T}_d} \times \mathbb{R}^d$, such that $(g,\psi,h,\mu)$ is solution of \eqref{VPL} and \eqref{VPL2}.

Furthermore, if $\lambda\in \mathbb{R}$, $\psi$ is a linear combination of functions of the two types
$$
J(t,x) =  t^{m}e^{i(k\cdot x-\omega t)} \textrm{ and } R(t,x) = r(t) e^{i(k\cdot x-i\lambda t)}
$$
where $k\in K$, $D_k(\omega) = 0$, $\Im \omega \geq \lambda$ and $0\leq m<n_{k,\omega}$ and $r$ is a bounded analytic function on $\mathbb{R}_+^*$. 

Similarly, $\mu$  is a linear combination of functions of the four types
$$
\begin{array}{lllllll}
J(t,x) &=&  t^{m}e^{i(k\cdot x-\omega t)} & & I(t,x) &=&  t^{\ell}e^{i(k\cdot x-(\omega_1+\omega_2) t)}  \\
B(t,x) &=&  t^{p}e^{i(k\cdot x-\frac{|k|}{|k_1|}\omega_1 t)}d_{k_1}^{k_2} & & R(t,x) &=& r(t) e^{i(k\cdot x-i\lambda t)}  \\
\end{array}
$$
where $k=k_1+k_2$, $r$ is a bounded analytic function on $\mathbb{R}_+^*$ and $k_1,k_2 \in K$ satisfy
$$
\left\{ \begin{array}{lll}
D_k(\omega) = D_{k_1}(\omega_1) = D_{k_2}(\omega_2)=0\\
k\cdot k_1 \neq 0 \textrm{ and } \left(d_{k_1}^{k_2} \neq 0 \iff \exists \gamma\in (0,1), \ k = \gamma k_1\right) \\
\Im \omega \geq \lambda \textrm{ and } \left( \Im \omega_1+\Im \omega_2\geq \lambda \textrm{ or } \frac{|k|}{|k_1|}\Im \omega_1\geq \lambda\right) \\
m < n_{k,\omega}, \ \ell < n_{k_1,\omega_1}+ n_{k_2,\omega_2}-1 + \sigma_{\omega_1,\omega_2}^{k_1,k_2}, \ p < n_{k_1,\omega_1} + 1+\nu_{\omega_1,\omega_2}^{k_1,k_2},
\end{array} \right.
$$
with $\sigma_{\omega_1,\omega_2}^{k_1,k_2}, \nu_{\omega_1,\omega_2}^{k_1,k_2}$ some non negative integers equal to zero in the non degenerate cases (see Remark \ref{relou} for details).  
\end{theorem}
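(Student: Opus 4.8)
The proof follows the roadmap already sketched in the introduction; the plan is to make each of its steps precise. The first move is a reduction: by Assumption \ref{assump}, $g_0$ is a finite sum of spatial Fourier modes indexed by $K$, and since \eqref{VPL} does not couple spatial modes, $\psi$ and $g$ are supported on $K$ as well; the bilinear source term $\nabla_x\psi\cdot\nabla_v g$ of \eqref{VPL2} is then supported on the finite set $\{k_1+k_2 : k_1,k_2\in K\}$, so $\mu$ and $h$ are too. Hence in both cases the problem splits into finitely many scalar problems, each amounting to constructing an inverse Laplace transform and extracting its asymptotic expansion by translating the Bromwich contour down to a prescribed line $\Im z=\lambda$.

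For the first order I would define $\widehat\psi(\cdot,k)$ as the inverse Laplace transform of $M_k=N_k/D_k$ along a high horizontal line and then define $g$ by the Duhamel formula \eqref{intro_art_duham}; that $(g,\psi)$ genuinely solves \eqref{VPL} is checked by differentiating under the integral sign, the required decay being supplied by $f^{eq},\widehat{g_0}(k,\cdot)\in\mathscr{E}(\mathbb{R}^d)$. The asymptotic expansion then follows from Remark \ref{remark_NkDk}: $N_k$, $D_k$ are entire and $D_k$ has only finitely many zeros with $\Im z\ge\lambda$, so translating the contour down to $\Im z=\lambda$ picks up at each such zero $\omega$ the residue $P_{\omega,k}(t)e^{-i\omega t}$ with $\deg P_{\omega,k}<n_{k,\omega}$ (the $J$ term), and leaves a contour integral over $\Im z=\lambda$ equal to $e^{\lambda t}$ times a bounded analytic function $r(t)$ of $t>0$ (the $R$ term); boundedness and analyticity of $r$ rely on decay of $M_k$ along horizontal lines, i.e. on the growth control of $N_k$ and the lower bound for $|D_k|$ away from its zeros from Corollary \ref{cor_Dk_cool} and Proposition \ref{Nk_nice}. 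Continuity of $g$ on $\mathbb{R}_+$ with $g(0)=g_0$, and $C^\infty$ regularity on $\mathbb{R}_+^*$, follow from the Duhamel formula, the regularity of $\psi$, and $f^{eq}\in\mathscr{E}(\mathbb{R}^d)$.

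For the second order, injecting the (exact) decomposition of $\psi$ together with the Duhamel formula for $g$ into the source term and Laplace transforming yields the meromorphic function $\mathcal{N}_k$ of \eqref{intro_art_rel_disp_lin2}, which I would write as a finite sum of elementary pieces — arising from products of $J$-type terms of $\psi$ with the free-transport and forced parts of $g$, from the remainder $R$ of $\psi$ reinjected into the source, and from $\nabla_v$ hitting the free transport; the inverse Laplace transforms are carried out explicitly via Theorem \ref{theo_anal_rep} and Lemma \ref{Hilb_to_Four}. The poles of $\mathcal{M}_k=\mathcal{N}_k/D_k$ have three origins: the zeros of $D_k$ (type (I), the $J$ term, $m<n_{k,\omega}$); and the poles of each elementary piece, which up to lower order is the Laplace transform of a function of the form \eqref{intro_Frep}, whose behavior depends on whether $\{\tau k_1+sk_2=0\}$ is a point (non-resonant) or a half-line (resonant: $k_2$ a negative multiple of $k_1$, so $k$ is a positive multiple of $k_1$ with $|k|<|k_1|$). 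In the non-resonant case, deforming the $(\tau,s)$-integration into the sector where $\Four f^{eq}$ decays faster than any exponential — property \eqref{def_Erd} of $\mathscr{E}(\mathbb{R}^d)$ — gives $F^{(rep)}_k(t)=a\,e^{-it(\omega_1+\omega_2)}+\mathcal{O}(e^{\lambda t})$ for every $\lambda$, the frequency of type (II) (the $I$ term). In the resonant case, the change of coordinates of the introduction followed by an integration by parts in $s$ separates a genuine product, one factor of which is $\int_0^t e^{-i(\omega_1(t-\gamma s)+\omega_2(t-s))}\,\textrm{d}s$ and carries both $e^{-it(\omega_1+\omega_2)}$ and the best frequency $\tfrac{|k|}{|k_1|}\omega_1$ (the $B$ term, with $d_{k_1}^{k_2}$ the surviving coefficient), plus two boundary terms, the one over $[t,\infty)$ being $\mathcal{O}(e^{\lambda t})$ again by the sectorial decay. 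Taking $\Lap^{-1}[\mathcal{M}_k]$ along $\Im z=\lambda$ produces the stated decomposition of $\widehat\mu(\cdot,k)$, and $h$ is recovered from its own Duhamel formula. The degree bounds, including $\sigma^{k_1,k_2}_{\omega_1,\omega_2}$ and $\nu^{k_1,k_2}_{\omega_1,\omega_2}$, come from multiplying orders of poles: an order-$n_{k_1,\omega_1}$ pole of $M_{k_1}$ and an order-$n_{k_2,\omega_2}$ one of $M_{k_2}$ combine through the double time-integral, the extra factor $1/D_k$ raises the order by one more exactly when the combined frequency is itself a zero of $D_k$ (the degenerate cases of Remark \ref{relou}), and the $+1$ in the bound for $p$ reflects the factor $t$ produced by $\nabla_v$ on the free transport.

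I expect the resonant case to be the crux: making the splitting and integration by parts rigorous, and especially controlling the remainders \emph{in the right class} — $\mathcal{O}(e^{\lambda t})$ for \emph{every} $\lambda$ — requires a contour deformation of the $(\tau,s)$-integral that stays inside the sector of analyticity of $\Four f^{eq}$, and this must be run through the entire list of elementary pieces of $\mathcal{N}_k$, not only the representative \eqref{intro_Frep}, including those with polynomial prefactors $t^m$ from $\psi$ and the one generated by the $R$ remainder of $\psi$ (a Hilbert-transform-type object handled through Lemma \ref{Hilb_to_Four}). A secondary but delicate point is the exact multiplicity bookkeeping in the degenerate configurations, where $\omega_1+\omega_2$ or $\tfrac{|k|}{|k_1|}\omega_1$ coincides with a zero of $D_k$, or where several pairs $(k_1,k_2)\in K^2$ yield the same frequency — precisely the situations the integers $\sigma$ and $\nu$ of Remark \ref{relou} are designed to absorb.
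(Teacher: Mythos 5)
Your roadmap closely matches the paper's: reduce to finitely many spatial modes, derive the dispersion relations \eqref{rel_disp_lin} and \eqref{rel_disp_o2} (Propositions~\ref{propo_existence_lin}, \ref{propo_existence_o2}), solve them by Laplace inversion, decompose $\mathcal{N}_k$ into elementary time-domain pieces, and split the resonant from the non-resonant case. The main divergence — and the place where your proposal has a genuine gap — is the inversion mechanism. You propose to ``translate the Bromwich contour down to $\Im z=\lambda$'', collect residues at the poles, and declare the leftover line integral to be $e^{\lambda t}$ times a bounded analytic $r(t)$, citing ``decay of $M_k$ along horizontal lines''. But Propositions~\ref{Dk_cool} and~\ref{Nk_nice} only give $|M_k(z)|=O(1/|z|)$, so along $\Im z=\lambda$ the integrand is $O(1/|x|)$ at $|x|\to\infty$, which is \emph{not} integrable; the shifted Bromwich integral is at best conditionally convergent, and boundedness and analyticity of $r$ on $\mathbb{R}_+^*$ do not then drop out for free. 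This is exactly the issue that Lemma~\ref{lemma_key} is built to resolve: instead of shifting a line, it subtracts the principal parts $Q(z)$, shows that $\frac{N}{1-R}-Q$ satisfies the $|z-i\lambda|^{-1}$ bound on the \emph{wider sector} $i\lambda+i\Sigma_{\beta+\pi/2}$, and invokes the analytic representation theorem (Theorem~\ref{theo_anal_rep}) to conclude that this sectorial estimate characterizes the Laplace transform of a function bounded and analytic on a complex sector — giving the $R$ term outright. You already cite the right ingredients (Theorem~\ref{theo_anal_rep}, Lemma~\ref{Hilb_to_Four}), so the fix is to route the inversion through them rather than through a contour shift.

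Two smaller imprecisions. First, in the resonant case you speak of ``integration by parts in $s$'' to factor the inner integral; what is actually needed is the change of variables $\tau\leftarrow (1-\gamma)s-\tau$ followed by replacing $\int_{-\gamma s}^{(1-\gamma)s}$ by $\int_{\mathbb{R}}-\int_{\text{tails}}$, which is a tail-splitting, not an integration by parts — otherwise the $A\cdot B_{\lambda_2,p}$ structure (which you correctly identify as the Best coefficient surviving from the $R$-remainder, cf.~\eqref{Blam2p}) does not appear. Second, the degree bookkeeping of $p< n_{k_1,\omega_1}+1+\nu$: the extra $+1$ does not come from ``$\nabla_v$ hitting the free transport'' — in the paper it arises from the $\tau+\gamma s$ factor generating the pieces $q_l^{m+1}$ in $F^2_k$ and the corresponding $R_{m+1,n,\gamma\omega_1+\omega_2}$ in Lemma~\ref{lem_algb_dec}, and separately from $F^1_k$ where the extra degree comes from the $(t-\tau)$ Jacobian in the Duhamel kernel. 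The structural claims are otherwise consistent with Lemmas~\ref{lem_F1_is_cool} and~\ref{lem_F2_is_cool} and Remark~\ref{relou}.
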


\begin{remark} 
We have
$e^{-i\omega t} = e^{\Im(\omega)t-i\Re(\omega )t}$, 
so that all the terms of the sum except maybe the remainder $R$ are of the form $e^{ik\cdot x}P(t)e^{\tilde{\lambda}t+i\beta t}$, with $P(t)\in \mathbb{C}[t]$. If one of this term satisfies $\tilde{\lambda}<\lambda$, it can be put in the remainder term $R$.


\end{remark}
\begin{remark}
Taking $\lambda$ decreasing to $-\infty$ makes the sum larger, but it always remain finite, for a fixed $\lambda$,  since $K, K+K$ are finite together with the zeros (see Remark \ref{remark_NkDk}).  We warn the reader that, a priori, the expansion does not converge as $\lambda$  goes  to $-\infty$.
\end{remark}

\begin{remark}
\label{relou}
It may exist some degenerate cases for which the four types of functions introduced in the second part of Theorem \ref{The_th} are non distinct. In such a case, the numbers $\sigma_{\omega_1,\omega_2}^{k_1,k_2}$ and $\nu_{\omega_1,\omega_2}^{k_1,k_2}$ do not vanish and we have
$$\sigma_{\omega_1,\omega_2}^{k_1,k_2} = (n_{k,\omega_1+\omega_2}-1) \mathbb{1}_{D_k(\omega_1+\omega_2)=0} +2 \cdot \mathbb{1}_{d_{k_1}^{k_2}\neq 0 \textrm{ and } \omega_1+\omega_2 = \frac{|k|}{|k_1|}\omega_1}$$
 and  $$\nu_{\omega_1,\omega_2}^{k_1,k_2} = (n_{k,\frac{|k|}{|k_1|}\omega_1}-1) \mathbb{1}_{D_k(\frac{|k|}{|k_1|}\omega_1)=0}$$
 where $\mathbb{1}_{P}$ denotes the characteristic function of the property $P$.
\end{remark}
 
 \begin{remark} 
 In the case where $d_{k_1}^{k_2}\neq0$, which we will call \emph{resonant case}, where the \emph{Best frequency}, that is the term $B$ appears, 
 $p$ can a priori be $\ge 1$. For the $J$ and $I$ terms, the multiplicity can be equal to one, corresponding to $m=\ell=0$.
 \end{remark}

  \begin{remark} It is quite direct to extend the classical linear analysis of the Vlasov Poisson with $1$ specie to the case of multi-species charged particles (see $\mathsection$ 3.1.2 in \cite{us}). Similarly, we expect that it would be possible to extend our second order analysis to the multi-species case. Actually, such an extension is discussed in our preliminary work ($\mathsection$ 5.1 in \cite{us}) to explain more precisely the numerical results associated with a multi-species test case introduced in \cite{MR3517446}.
   \end{remark}
   
 \begin{remark}
   It may be interesting to try to extend this second order analysis to non-homogeneous equilibria. However such an analysis seems much more involved. Indeed, in this case, to carry out an analysis of the linearized equation similar to the analysis of the homogeneous case, it is usual to use action-angle variables (see for example \cite{MR3064331},\cite{horsin}). However, this change of coordinates generates some singularities and boundary effects leading to an algebraic decay of the electric field.
 \end{remark}

 
The fifth section of this paper is devoted to some numerical experiments. They principally aim at highlighting the Best's waves because most of the other phenomena associated with second order terms have been studied numerically in the proceedings \cite{us}. Unlike the linear case, it seems that there is no elementary way to determine \emph{a priori} the coefficients associated with the asymptotic expansion of $\mu$. Indeed, they depend non trivially on the solution of \eqref{VPL} (and not only on its asymptotic expansion). Consequently, we use here least squares procedures, which permit to have a simple and quick way to find these coefficients. 

There are some difficulty arising of these computations because, as we compare the solution of the second order expansion to the solution of \eqref{VP_eq}, this gives a constraint on $\varepsilon$ and the final time that should be small enough. As we have seen, the final time should also not be too small in order to be in the asymptotic regime, and this is also true for $\varepsilon$ (which is here put to the square, as we consider second order expansion) due to the limits imposed by machine precision. 

We admit that for the numerical checking of codes, second order terms have not gained much popularity, 
maybe as the linear terms generally already give  the  main phenomena. We emphasize that we are here able to identify the contributions of the different frequencies, 
and thus do an effective comparison with, as already told, multidimensional and nonlinear features.

\medskip

 \paragraph{\emph{Some remarks about the notations}} In order to keep proofs as readable as possible, we do some classical abuses of notation for integral transforms. For example, the Fourier transform on $\mathbb{R}^d$ is always associated with the variable $v$, it means that if $u\in L^1(\mathbb{R}^d)$ then $\Four[u]$ and $\Four[u(v)]$ denotes the same functions. Similarly, if $u$ is a function of $t,x,v$ then $\Four[u](t,x,\xi)$ denotes $\Four[v\mapsto u(t,x,v)](t,x,\xi)$. Similarly, $t$ is associated with $\Lap$, $z$ with $\Lap^{-1}$, $\xi$ with $\Four^{-1}$, $x$ with $u\mapsto \widehat{u}$ and $k$ with $(u\mapsto \widehat{u})^{-1}$.
 
 \medskip
 
 \paragraph{\emph{Outline of the work}}
 In Section \ref{sec:2}, we derive some integral equations (called dispersion relations) satisfied by solutions $\psi,\mu$ of \eqref{VPL} and \eqref{VPL2}. 
 Then we prove that it is enough to solve these dispersion relations to get solutions for \eqref{VPL} and \eqref{VPL2}. 
 The next two sections are devoted to the resolution of these dispersion relations and to the asymptotic expansions of their solutions: Section \ref{section_linear_resolution} is for the first order expansion and Section \ref{sec:4} is for the second order expansion. 
 Finally in Section \ref{sec:5}, we give some numerical results.




\section{Derivation of the dispersion relations}
\label{sec:2}

\subsection{Dispersion relations for first and second order} 

In the following propositions, we give the \emph{dispersion relations}, that are obtained through Fourier and Laplace transforms.
Note that we have an expression for both the electric potentials $\psi$ resp. $\mu$ and the distribution function $g$ resp. $h$ of the first  resp.  second order dispersion relations. 

\begin{proposition}
\label{propo_existence_lin}
Assume $f^{eq}\in \mathscr{E}(\mathbb{R}^d)$ and $g_0$ satisfies Assumption \ref{assump}. Assume there exists a $C^{\infty}$ function on $\mathbb{R}_+^*\times{\mathbb{T}_d}$, denoted $\psi$, and there exists $\lambda_0 >0$ such that $e^{ -\lambda_0 t}\psi(t,x)$ is bounded on $\mathbb{R}_+^*\times{\mathbb{T}_d}$. Furthermore, assume that, for all $k \in \widehat{{\mathbb{T}_d}}\setminus \{0\}$, $\Lap \left[ \widehat{\psi}(t,k)\right] (z) $ is a solution of 
\begin{equation}
\label{rel_disp_lin}
\Lap \left[ \widehat{\psi}(t,k)\right] (z) D_k(z) = -\frac{i}{|k|^2}  \int_{\mathbb{R}^d} \frac{\widehat{g_0}(k,v)}{v\cdot k-z}  \textrm{d}v.
\end{equation}
for $\Im z > \lambda_0$. \\
If we define $g$ by
\begin{equation}
\label{g_Duhamel_1}
g(t,x,v) = \sum_{k \in K} e^{ik\cdot(x-vt)} \widehat{g_0}(k,v) + i \int_{0}^t e^{i k\cdot(x-v(t-s))} \widehat{\psi}(s,k) k\cdot\nabla_v f^{eq}(v) \textrm{d}s,
\end{equation}
then $g$ is a $C^{\infty}$ function on $\mathbb{R}_+^*\times{\mathbb{T}_d}\times \mathbb{R}^d$, continuous on $\mathbb{R}_+\times{\mathbb{T}_d}\times \mathbb{R}^d$ and $(g,\psi)$ is a solution of \eqref{VPL}.
\end{proposition}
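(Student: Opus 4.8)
The plan is to verify directly that the function $g$ defined by the Duhamel-type formula \eqref{g_Duhamel_1} solves \eqref{VPL}, using the dispersion relation \eqref{rel_disp_lin} only to control the Poisson equation. The first step is a regularity analysis: since $g_0$ is supported on finitely many modes $k\in K$ with $v\mapsto\widehat{g_0}(k,v)\in\mathscr{E}(\mathbb{R}^d)$, the free-streaming part $\sum_{k\in K}e^{ik\cdot(x-vt)}\widehat{g_0}(k,v)$ is manifestly $C^\infty$ on $\mathbb{R}_+\times{\mathbb{T}_d}\times\mathbb{R}^d$. For the Duhamel integral term, I would use that $\psi$ is $C^\infty$ on $\mathbb{R}_+^*\times{\mathbb{T}_d}$ with $e^{-\lambda_0 t}\psi$ bounded, and that $k\cdot\nabla_v f^{eq}(v)$ is Schwartz, to differentiate under the integral sign; the only delicate point is smoothness up to $t=0$, where the integral and its $t$-derivatives vanish or remain continuous because the integrand is bounded near $s=0$. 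One should note that $\psi$ being real-valued forces the reality of $g$ through the symmetry $\widehat{g_0}(-k,v)=\overline{\widehat{g_0}(k,v)}$ and $\widehat\psi(t,-k)=\overline{\widehat\psi(t,k)}$.

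Next I would check the Vlasov equation itself. Writing $g(t,x,v)=\sum_k \widehat g(t,k,v)e^{ik\cdot x}$, the transport operator $\partial_t+v\cdot\nabla_x$ acting on $e^{ik\cdot(x-v(t-s))}$ (viewed as a function of $(t,x)$ with $s$ fixed) annihilates it, so applying $\partial_t+v\cdot\nabla_x$ to \eqref{g_Duhamel_1} leaves only the boundary contribution from the upper limit $s=t$ of the integral, which is exactly $i\,\widehat\psi(t,k)\,k\cdot\nabla_v f^{eq}(v)\,e^{ik\cdot x}$. Summing over $k$ gives $\partial_t g+v\cdot\nabla_x g = i\sum_k \widehat\psi(t,k)(k\cdot\nabla_v f^{eq})e^{ik\cdot x}$, and since $\nabla_x\psi = i\sum_k k\,\widehat\psi(t,k)e^{ik\cdot x}$, the right-hand side is $\nabla_x\psi\cdot\nabla_v f^{eq}$, which is the Vlasov equation in \eqref{VPL}. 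The initial condition $g(0)=g_0$ is immediate since the integral term vanishes at $t=0$ and the first term reduces to $\sum_{k\in K}e^{ik\cdot x}\widehat{g_0}(k,v)=g_0$.

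The main obstacle is the Poisson equation $\Delta_x\psi+\int_{\mathbb{R}^d}g\,\textrm{d}v=0$, equivalently $-|k|^2\widehat\psi(t,k)+\int_{\mathbb{R}^d}\widehat g(t,k,v)\,\textrm{d}v=0$ for each $k$. Here I would integrate \eqref{g_Duhamel_1} in $v$ to get
\[
\int_{\mathbb{R}^d}\widehat g(t,k,v)\,\textrm{d}v = \int_{\mathbb{R}^d} e^{-ik\cdot v t}\widehat{g_0}(k,v)\,\textrm{d}v + i\int_0^t \widehat\psi(s,k)\!\left(\int_{\mathbb{R}^d} e^{-ik\cdot v(t-s)}\,k\cdot\nabla_v f^{eq}(v)\,\textrm{d}v\right)\textrm{d}s,
\]
and the claim becomes an identity $|k|^2\widehat\psi(t,k) = \Four[\widehat{g_0}(k,\cdot)](kt) + i\int_0^t\widehat\psi(s,k)\,\Four[k\cdot\nabla_v f^{eq}](k(t-s))\,\textrm{d}s$, i.e.\ a Volterra convolution equation for $\widehat\psi(\cdot,k)$. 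The strategy is to apply the Laplace transform in $t$ to both sides: $\Lap$ turns the convolution into a product, and using $\Lap\big[u\mapsto \Four[F](ku)\big](z)$ expressed via the formula $\int_0^\infty e^{izt}\Four[F](kt)\,\textrm{d}t = i\int_{\mathbb{R}^d}\frac{F(v)}{v\cdot k - z}\,\textrm{d}v$ (valid for $\Im z$ large, by Fubini), one recognizes $N_k(z)\cdot(-|k|^2/(-i))$-type expressions and the factor $D_k(z)$ coming from the $\nabla_v f^{eq}$ term after an integration by parts in $v$. Thus the Laplace transform of the desired identity is precisely \eqref{rel_disp_lin}, which holds by hypothesis for $\Im z>\lambda_0$; since both sides are Laplace transforms of continuous functions of sub-exponential growth $O(e^{\lambda_0 t})$, injectivity of the Laplace transform on such functions yields the pointwise identity for all $t>0$, hence the Poisson equation. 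The technical care needed is (i) justifying Fubini in the Laplace–Fourier interchange, which the decay properties encoded in $f^{eq}\in\mathscr{E}(\mathbb{R}^d)$ and $\widehat{g_0}(k,\cdot)\in\mathscr{E}(\mathbb{R}^d)$ guarantee, and (ii) the integration by parts $\int e^{-ik\cdot v(t-s)}k\cdot\nabla_v f^{eq}\,\textrm{d}v = i(t-s)|k|^2\cdot(\text{something})$ versus the resolvent form — one must track the factor $|k|^2$ carefully so that it matches $D_k$ and the Laplace variable correctly. I expect this bookkeeping, together with the justification that $e^{-\lambda_0 t}$ times the right-hand side of the Volterra identity is bounded (so that Laplace injectivity applies), to be the only real work.
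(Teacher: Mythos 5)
Your overall plan — verify the Vlasov equation directly from the Duhamel formula, and obtain the Poisson equation by passing through Laplace transforms, using \eqref{rel_disp_lin} and injectivity of the Laplace transform on continuous functions of exponential order — is exactly the route the paper takes (the paper factors it through a general lemma for the abstract system \eqref{VP_lin_gen} and then specializes; you do it in place, which is equivalent). The regularity observations, the Fubini justifications, and the recognition that $\Lap\bigl[\Four[F](kt)\bigr](z) = \tfrac{1}{i}\int \tfrac{F(v)}{v\cdot k - z}\,\mathrm{d}v$ is what produces the $D_k$ factor are all in line with the paper's argument.

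However, there is a genuine gap in your handling of the spatial modes. Formula \eqref{g_Duhamel_1} defines $g$ with support in $K$ only, while the hypothesis grants \eqref{rel_disp_lin} for \emph{all} $k\in\widehat{\mathbb{T}}_d\setminus\{0\}$ and says nothing directly about which modes $\psi$ carries. If $\widehat{\psi}(t,k)\ne 0$ for some $k\notin K\cup\{0\}$, then $\nabla_x\psi\cdot\nabla_v f^{eq}$ has a nonzero Fourier coefficient at that $k$, but $\partial_t g + v\cdot\nabla_x g$ does not — so the Vlasov equation would already fail; and similarly the Poisson equation at such a $k$ would demand $|k|^2\widehat{\psi}(t,k)=0$, which is not produced by your Volterra/Laplace argument (for $k\notin K$ the Duhamel formula just gives $\widehat g\equiv 0$ and an empty identity). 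You therefore need, as a preliminary step, to show that $\widehat{\psi}(t,k)=0$ for all $k\notin K\cup\{0\}$: for such $k$, $\widehat{g_0}(k,\cdot)\equiv 0$, so \eqref{rel_disp_lin} reads $\Lap[\widehat{\psi}(t,k)](z)D_k(z)=0$; since $D_k(z)\ne 0$ for $\Im z$ large (this follows from the estimate $|D_k(z)-1|\to 0$), the Laplace transform of $\widehat{\psi}(\cdot,k)$ vanishes on a half-plane, and injectivity gives $\widehat{\psi}(t,k)\equiv 0$. With this step inserted your argument goes through and matches the paper's proof.
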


\begin{proposition}
\label{propo_existence_o2}
Assume $f^{eq}\in \mathscr{E}(\mathbb{R}^d)$ and $g_0$ satisfies Assumption \ref{assump}. Assume there exists a solution of \eqref{VPL} as in Proposition \eqref{propo_existence_lin}.
Assume there exists a $C^{\infty}$ function on $\mathbb{R}_+^*\times{\mathbb{T}_d}$, denoted $\mu$, and there exists $\lambda_1 >2\lambda_0$ such that $e^{ -\lambda_1 t}\psi(t)$ is bounded on $\mathbb{R}_+^*\times{\mathbb{T}_d}$. Furthermore, assume that, for all $k \in \widehat{{\mathbb{T}_d}}\setminus \{0\}$, $\Lap \left[ \widehat{\mu}(t,k)\right] (z) $ is a solution of 
\begin{equation}
\label{rel_disp_o2}
\Lap \left[ \widehat{\mu}(t,k)\right] (z) D_k(z) = -\frac{i}{|k|^2}  \int_{\mathbb{R}^d} \frac{\Lap \left[ \widehat{\nabla_x \psi\cdot\nabla_v g}(t,k,v)   \right](z)}{v\cdot k-z}  \textrm{d}v.
\end{equation}
for $\Im z > \lambda_1$. \\
If we define $h$ by
\begin{multline*}
\label{g_Duhamel_2}
h(t,x,v) = \sum_{k \in K+K}  i \int_{0}^t e^{i k\cdot(x-v(t-s))} \widehat{\mu}(s,k) k\cdot\nabla_v f^{eq}(v) \textrm{d}s \\
+ \int_{0}^t \widehat{\nabla_x \psi\cdot\nabla_v g}(s,k,v) e^{i k\cdot(x-v(t-s))}   \textrm{d}s ,
\end{multline*}
then $h$ is a $C^{\infty}$ function on $\mathbb{R}_+^*\times{\mathbb{T}_d}\times \mathbb{R}^d$, continuous on $\mathbb{R}_+\times{\mathbb{T}_d}\times \mathbb{R}^d$ and $(h,\mu)$ is a solution of \eqref{VPL2}.
\end{proposition}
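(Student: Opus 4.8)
\emph{Strategy and regularity.} Both statements are verification results: we are handed a potential ($\psi$, resp.\ $\mu$) solving a dispersion relation, we define a distribution function ($g$, resp.\ $h$) by an explicit Duhamel formula, and we must check regularity and that the pair solves \eqref{VPL} (resp.\ \eqref{VPL2}). The plan is to do the two at once, treating Proposition~\ref{propo_existence_lin} in detail and then repeating the scheme for Proposition~\ref{propo_existence_o2} with $\widehat{g_0}$ replaced by the source term $\widehat{\nabla_x\psi\cdot\nabla_v g}$, $N_k$ replaced by $\mathcal{N}_k$, and $\lambda_0$ by $\lambda_1$. For the regularity, note that since $K$ (resp.\ $K+K$) is finite, $g$ (resp.\ $h$) is a finite sum of free-streaming terms $e^{ik\cdot(x-vt)}\widehat{g_0}(k,v)$, which are $C^\infty$ because $\widehat{g_0}(k,\cdot)\in\mathscr{E}(\mathbb{R}^d)\subset\mathscr{S}(\mathbb{R}^d)$, and of Duhamel integrals $\int_0^t F(s,t,x,v)\,\textrm{d}s$. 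I would observe that the exponential bound $|\psi(t,x)|\le Ce^{\lambda_0 t}$ (resp.\ the analogous bound on $\mu$ together with the explicit formula \eqref{g_Duhamel_1} for $g$ and its $v$-derivatives) makes $F$ and all its $(t,x,v)$-derivatives bounded, locally uniformly, as $s\to0^+$; this legitimizes differentiating under the integral sign on $\mathbb{R}_+^*$ and continuity up to $t=0$. Letting $t\to0^+$ the Duhamel integral vanishes, so $g(0,\cdot)=g_0$ and $h(0,\cdot)=0$, as required.

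\emph{The transport equation.} Here I would use that the free transport operator $\partial_t+v\cdot\nabla_x$ annihilates every function of the form $(t,x,v)\mapsto a(x-vt,v)$: it kills the free-streaming part of $g$ (which is exactly $g_0(x-vt,v)$), and applied to a Duhamel integral $\int_0^t a(x-v(t-s),v;s)\,\textrm{d}s$ it leaves only the boundary term $a(x,v;t)$. Summing over modes, and using that $\widehat{\psi}$ is supported in $K$ — which follows from \eqref{rel_disp_lin}, whose right-hand side vanishes when $\widehat{g_0}(k,\cdot)\equiv0$ — the boundary term coming from \eqref{g_Duhamel_1} is precisely $\nabla_x\psi\cdot\nabla_v f^{eq}$, giving the first equation of \eqref{VPL}. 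For \eqref{VPL2} the same computation produces the extra boundary term $\sum_{k\in K+K}\widehat{\nabla_x\psi\cdot\nabla_v g}(t,k,v)e^{ik\cdot x}$, which equals $\nabla_x\psi\cdot\nabla_v g$ by Fourier inversion over the finite set $K+K$: exactly the source term.

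\emph{The Poisson equation.} Mode by mode this reads $-|k|^2\widehat{\psi}(t,k)+\int_{\mathbb{R}^d}\widehat{g}(t,k,v)\,\textrm{d}v=0$. I would insert \eqref{g_Duhamel_1}, split $\int\widehat{g}\,\textrm{d}v$ into its free-streaming part $\int e^{-ik\cdot vt}\widehat{g_0}(k,v)\,\textrm{d}v$ and its Duhamel part (a time convolution of $\widehat{\psi}(\cdot,k)$ with $\tau\mapsto i\int e^{-ik\cdot v\tau}k\cdot\nabla_v f^{eq}(v)\,\textrm{d}v$), and take the Laplace transform in $t$. Using $\Lap[e^{-i\omega t}](z)=i/(z-\omega)$, the convolution theorem, and Fubini in $(t,v)$, the first part has Laplace transform $|k|^2N_k(z)$ and the second $|k|^2(1-D_k(z))\Lap[\widehat{\psi}(\cdot,k)](z)$ by the definition of $D_k$. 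Hence the Laplace transform of the mode-$k$ Poisson equation is $|k|^2\big(N_k(z)-D_k(z)\Lap[\widehat{\psi}(\cdot,k)](z)\big)$, which vanishes by \eqref{rel_disp_lin}; since $-|k|^2\widehat{\psi}(\cdot,k)+\int\widehat{g}(\cdot,k,v)\,\textrm{d}v$ is continuous with at most exponential growth, injectivity of the Laplace transform yields the equation. For Proposition~\ref{propo_existence_o2} the argument is identical with $N_k$ replaced by $\mathcal{N}_k$ and $\widehat{g_0}$ by $\widehat{\nabla_x\psi\cdot\nabla_v g}$; the constraint $\lambda_1>2\lambda_0$ is used precisely so that $\Lap[\widehat{\nabla_x\psi\cdot\nabla_v g}(\cdot,k,v)](z)$ converges on the half-plane where \eqref{rel_disp_o2} is posed.

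\emph{Main obstacle.} I expect the delicate part to be the Laplace-transform computation of the last step, namely justifying the interchange of the $t$-Laplace integral with the $v$-integral and the use of the convolution theorem, uniformly on the relevant half-plane. This requires combining the Schwartz decay in $v$ of $\widehat{g_0}(k,\cdot)$, of $\nabla_v f^{eq}$, and — for the second-order statement — of $v\mapsto\widehat{\nabla_x\psi\cdot\nabla_v g}(t,k,v)$ (read off from \eqref{g_Duhamel_1}, which also shows this term grows at most like $e^{2\lambda_0 t}$) with the elementary bound $|z-v\cdot k|\ge\Im z$ that gives $v$-integrability uniform in $z$. A secondary technical point, already used above, is the $t\to0^+$ behaviour: since $\psi$, $\mu$, $g$ are controlled only on $\mathbb{R}_+^*$, one must check the integrands remain bounded near $t=0$ to obtain continuity of $g$, $h$ at $t=0$ and hence the initial conditions.
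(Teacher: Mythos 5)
Your proposal is correct and follows essentially the same route as the paper: Duhamel form for $h$ solves the Vlasov part with source, the Laplace transform of the mode-$k$ Poisson identity reduces to the dispersion relation via $D_k$ and the numerator, and injectivity of the Laplace transform closes the argument. The paper simply factors this through the abstract Propositions~\ref{prop_anal_lin} and~\ref{exist_lin_gen} for the generic system~\eqref{VP_lin_gen} and then applies them with $\mathfrak{S}=\nabla_x\psi\cdot\nabla_v g$ (checking mode support in $K+K$ and the bound $e^{-\lambda_1 t}|\widehat{\mathfrak{S}}(t,k,v)|\le\mathfrak{d}(v)$), whereas you unpack the same computations inline; the one step you leave slightly implicit — that $\widehat{\mu}(t,k)=0$ for $k\notin K+K$ — requires, in addition to the vanishing right-hand side, that $D_k(z)\neq0$ for $\Im z$ large (Proposition~\ref{Dk_cool}) plus Laplace injectivity, both of which you already use elsewhere.
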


\subsection{A general linearized Vlasov-Poisson equation} In order to prove Propositions \ref{propo_existence_lin} and \ref{propo_existence_o2}, 
as  \eqref{VPL} and  \eqref{VPL2} share the same structure, 
we focus on a general linearized Vlasov Poisson equation
\begin{equation}
\label{VP_lin_gen}
\tag{VPLG}
\left\{ \begin{array}{llll}
\partial_t \mathfrak{g}(t,x,v) + v\cdot\nabla_x \mathfrak{g}(t,x,v) - \nabla_x \mathfrak{u}(t,x)\cdot \nabla_v f^{eq}(v) = \mathfrak{S}(t,x,v), \\
\Delta_x  \mathfrak{u}(t,x) + \int \mathfrak{g}(t,x,v) \textrm{d}v =0, \\
\mathfrak{g}(0,x,v) = \mathfrak{g}_0(x,v).
\end{array} \right.
\end{equation}
In the following proposition, we derive a general dispersion relation satisfied by $ \mathfrak{u}$. We first do not consider the coupling with the Poisson equation.
\begin{proposition}
\label{prop_anal_lin} Assume $\mathfrak{g}_0 \in C^1({\mathbb{T}_d}\times \mathbb{R}^d)$, $f^{eq}\in C^2(\mathbb{R}^d)$ and $\mathfrak{S}(t,x,v)\in C^1(\mathbb{R}_+^*\times{\mathbb{T}_d}\times \mathbb{R}^d)$  and there exist $\lambda_0>0$, $\mathfrak{d} \in C^0(\mathbb{R}^d)\cap L^1(\mathbb{R}^d)$ satisfying
\[ \forall (t,k,v) \in \mathbb{R}_+^*\times \widehat{{\mathbb{T}_d}} \times \mathbb{R}^d, \ e^{-\lambda_0 t} |\widehat{\mathfrak{S}}(t,k,v)|+|\widehat{\mathfrak{g}_0}(k,v)| + |\nabla_v f^{eq}(v)| \leq \mathfrak{d}(v).\]
Assume there exists $ \mathfrak{u} \in C^1(\mathbb{R}_+^*\times{\mathbb{T}_d})$ such that $e^{ -\lambda_0 t}\mathfrak{u}(t)$ is bounded on $\mathbb{R}_+^*\times{\mathbb{T}_d}$. 
 Assume there exists a continuous function $\mathfrak{g}\in  C^1(\mathbb{R}_{+}^* \times{\mathbb{T}_d} \times \mathbb{R}^d)$, continuous on $\mathbb{R}_{+} \times{\mathbb{T}_d} \times \mathbb{R}^d$ such that $\mathfrak{g}$ is solution of the Vlasov equation, i.e. for all $(t,x,v) \in \mathbb{R}_+^*\times{\mathbb{T}_d}\times \mathbb{R}^d$
\begin{equation}
\label{Vlasov_equation}
 \left\{ \begin{array}{llll} \partial_t \mathfrak{g} (t,x,v) + v\cdot\nabla_x \mathfrak{g}(t,x,v) - \nabla_x \mathfrak{u}(t,x) \cdot\nabla_v f^{eq}(v) = \mathfrak{S}(t,x,v), \\
									\mathfrak{g}(0,x,v)=\mathfrak{g}_0(x,v).
\end{array}  \right. 
\end{equation}
If $\lambda> \lambda_0$ then for all $k\in \widehat{{\mathbb{T}_d}}$, there exists $C>0$,
\begin{equation}
\label{control_got_g}
\forall v \in \mathbb{R}^d,  \ \sup_{t\in \mathbb{R}_+} |e^{-\lambda t}\widehat{\mathfrak{g}}(t,k,v)| \leq C \mathfrak{d}(v).
\end{equation}
Furthermore, for all $ z \in \mathbb{C}$ with $\Im(z)>\lambda_0$ we have
\begin{multline}
\label{eq_post_Vlin}
\Lap \left[ \int_{\mathbb{R}^d} \widehat{\mathfrak{g}}(t,k,v) \textrm{d}v \right] (z)  =  -i \int_{\mathbb{R}^d} \frac{ \widehat{\mathfrak{g}_0}(k,v)}{v\cdot k-z} \textrm{d}v 
+  \Lap [\widehat{\mathfrak{u}}(t,k)](z) \int_{\mathbb{R}^d} \frac{ k\cdot \nabla_v f^{eq}(v)}{v\cdot k-z} \textrm{d}v \\- i \int_{\mathbb{R}^d}  \frac{ \Lap \left[ \widehat{\mathfrak{S} }(t,k,v) \right](z)}{v\cdot k-z} \textrm{d}v.
\end{multline}  
\end{proposition}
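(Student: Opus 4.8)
\emph{Proof proposal.} The plan is to Fourier transform the Vlasov equation \eqref{Vlasov_equation} in the space variable, to solve the resulting scalar linear ODE explicitly, and then to integrate over $v$ and apply the Laplace transform in time. Since $\mathfrak{g},\mathfrak{u}\in C^1$ and $x$ ranges over the compact torus ${\mathbb{T}_d}$, one may differentiate under the integral defining the Fourier coefficients, so $\widehat{\mathfrak{g}}(t,k,v)$ is $C^1$ in $(t,v)$ on $\mathbb{R}_+^*\times\mathbb{R}^d$, continuous up to $t=0$, and transforming \eqref{Vlasov_equation} gives, for each fixed $(k,v)$,
\[
\partial_t \widehat{\mathfrak{g}}(t,k,v) + i(v\cdot k)\,\widehat{\mathfrak{g}}(t,k,v) = i\,\widehat{\mathfrak{u}}(t,k)\,\big(k\cdot\nabla_v f^{eq}(v)\big) + \widehat{\mathfrak{S}}(t,k,v),\qquad \widehat{\mathfrak{g}}(0,k,v)=\widehat{\mathfrak{g}_0}(k,v).
\]
By uniqueness for this linear ODE, $\widehat{\mathfrak{g}}$ is necessarily given by the variation of constants (Duhamel) formula
\[
\widehat{\mathfrak{g}}(t,k,v) = e^{-i(v\cdot k)t}\,\widehat{\mathfrak{g}_0}(k,v) + \int_0^t e^{-i(v\cdot k)(t-s)}\Big(i\,\widehat{\mathfrak{u}}(s,k)\,(k\cdot\nabla_v f^{eq}(v)) + \widehat{\mathfrak{S}}(s,k,v)\Big)\,\textrm{d}s ,
\]
for all $t\ge 0$ (the endpoint $t=0$ being handled by continuity).

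To prove the bound \eqref{control_got_g}, I would take moduli in this formula. Using $|e^{-i(v\cdot k)t}|=1$, the majorant $e^{-\lambda_0 s}|\widehat{\mathfrak{S}}(s,k,v)|+|\widehat{\mathfrak{g}_0}(k,v)|+|\nabla_v f^{eq}(v)|\le\mathfrak{d}(v)$, and the elementary estimate $|\widehat{\mathfrak{u}}(s,k)|\le C_0\,e^{\lambda_0 s}$ — which follows from the boundedness of $e^{-\lambda_0 t}\mathfrak{u}$ on $\mathbb{R}_+^*\times{\mathbb{T}_d}$, a Fourier coefficient being bounded by the supremum of the function — one gets $|\widehat{\mathfrak{g}}(t,k,v)|\le C_k\,e^{\lambda_0 t}\,\mathfrak{d}(v)$ with $C_k$ depending only on $k$, $C_0$ and $\lambda_0$. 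Multiplying by $e^{-\lambda t}$ with $\lambda>\lambda_0$ and taking the supremum over $t\in\mathbb{R}_+$ yields \eqref{control_got_g}. In particular $\int_{\mathbb{R}^d}\widehat{\mathfrak{g}}(t,k,v)\,\textrm{d}v$ is well defined, $e^{-\lambda t}$ times it is bounded, and its Laplace transform exists for $\Im z>\lambda_0$.

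For the dispersion relation \eqref{eq_post_Vlin}, I would substitute the Duhamel formula into $\Lap\big[\int_{\mathbb{R}^d}\widehat{\mathfrak{g}}(t,k,v)\,\textrm{d}v\big](z)$ and interchange the orders of integration. Fixing $z$ with $\Im z>\lambda_0$ and choosing $\lambda\in(\lambda_0,\Im z)$, the bounds above make the integrand absolutely integrable — in $(t,v)$ for the $\widehat{\mathfrak{g}_0}$ term and in $(t,s,v)$ for the Duhamel term — because $\int_0^\infty e^{(\lambda_0-\Im z)t}\,\textrm{d}t<\infty$ and $\mathfrak{d}\in L^1(\mathbb{R}^d)$, so Fubini applies. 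It then remains to insert the elementary identity $\Lap[e^{-i(v\cdot k)t}](z)=\dfrac{-i}{v\cdot k-z}$ (valid because $\Im z>0$) together with the convolution rule $\Lap\big[\int_0^t e^{-i(v\cdot k)(t-s)}\varphi(s)\,\textrm{d}s\big](z)=\dfrac{-i}{v\cdot k-z}\,\Lap[\varphi](z)$, used with $\varphi(s)=i\,\widehat{\mathfrak{u}}(s,k)\,(k\cdot\nabla_v f^{eq}(v))$ and with $\varphi(s)=\widehat{\mathfrak{S}}(s,k,v)$. The three terms produced are exactly the three terms on the right-hand side of \eqref{eq_post_Vlin}.

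The routine part is all of the above; I expect the genuine care to lie in the Fubini bookkeeping for the double time integral, and in making sure that the Duhamel formula really represents $\widehat{\mathfrak{g}}$ — i.e. that the transform of the assumed $\mathfrak{g}$ solves the scalar ODE and that no other solution with the same initial datum exists — rather than being merely one admissible solution.
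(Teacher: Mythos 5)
Your proposal is correct and follows essentially the same route as the paper: Fourier transform in $x$, Duhamel to get the explicit formula for $\widehat{\mathfrak{g}}$, the pointwise bound \eqref{control_got_g}, and then a Fubini interchange to reach \eqref{eq_post_Vlin}. The only cosmetic difference is that the paper applies the Laplace transform to the transformed ODE \eqref{Vlasov_lin_Four} and divides algebraically by $i(v\cdot k - z)$, whereas you take the Laplace transform of the Duhamel formula directly and invoke the convolution rule — these are equivalent manipulations.
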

\begin{proof}

First, applying a space Fourier transform to \eqref{Vlasov_equation}, we get for all $(t,k,v) \in \mathbb{R}_+^* \times \widehat{{\mathbb{T}_d}} \times  \mathbb{R}^d$
\begin{equation}
\label{Vlasov_lin_Four}
\partial_t \widehat{\mathfrak{g} }(t,k,v) + iv\cdot k \  \widehat{\mathfrak{g} }(t,k,v) - i \widehat{\mathfrak{u} }(t,k) k\cdot \nabla_v f^{eq}(v) = \widehat{\mathfrak{S} }(t,k,v) . 
\end{equation}
Consequently, applying Duhamel formula, we get for all $(t,k,v) \in \mathbb{R}_+\times \widehat{{\mathbb{T}_d}} \times \mathbb{R}^d$
\begin{multline}
\label{g_Duhamel_fixed_mod}
 \widehat{\mathfrak{g}}(t,k,v) = e^{-ik\cdot vt} \widehat{\mathfrak{g}_0}(k,v) + i \int_{0}^t e^{-i k\cdot v(t-s)} \widehat{\mathfrak{u}}(s,k) k\cdot\nabla f^{eq}(v) \textrm{d}s \\
 + \int_{0}^t e^{-i k\cdot v(t-s)}  \widehat{\mathfrak{S} }(s,k,v) \textrm{d}s.
\end{multline}
So, we deduce, there exist $M,C>0$ such that, if $\lambda>\lambda_0$ then
\begin{align*}
| \widehat{\mathfrak{g}}(t,k,v)| &\leq  |\widehat{\mathfrak{g}_0}(k,v)| + \int_{0}^t e^{\lambda_0 s} M |k| |\nabla_v f^{eq}(v)| \textrm{d}s + \int_{0}^t e^{\lambda_0 s}  e^{-\lambda_0 s} |\widehat{\mathfrak{S} }(s,k,v) | \textrm{d}s, \\
 &\leq \mathfrak{d}(v) + t e^{\lambda_0 t}  (1+|k| M) \mathfrak{d}(v),\\
 &\leq C e^{\lambda t} \mathfrak{d}(v).
\end{align*}

We deduce of this last estimation, that for any fixed $v\in \mathbb{R}^d$ and for any $\lambda>\lambda_0$, $e^{-\lambda t}\widehat{g}(t,k,v)$ is continuous and bounded on $\mathbb{R}_+$. Consequently, we can apply a Laplace transform on \eqref{Vlasov_lin_Four} and get for all $z \in \mathbb{C}$ such that $\Im z > \lambda_0$ and $v\in \mathbb{R}^d$,
\begin{multline*} -i  z \Lap [\widehat{\mathfrak{g}}(t,k,v)](z) -  \widehat{\mathfrak{g}_0}(k,v) + iv\cdot k \Lap [\widehat{\mathfrak{g}}(t,k,v)](z) - i \Lap [\widehat{\mathfrak{u}}(t,k)](z) k\cdot \nabla_v f^{eq}(v)\\
 = \Lap \left[ \widehat{\mathfrak{S} }(t,k,v) \right](z). \end{multline*}
Since $\Im z>0$, this relation can be divided by $i(v\cdot k-z)$ to get for all $v\in \mathbb{R}^d$
\[ \Lap [\widehat{\mathfrak{g}}(t,k,v)](z)  =  -i \frac{ \widehat{\mathfrak{g}_0}(k,v)}{v\cdot k-z} +  \Lap [\widehat{\mathfrak{u}}(t,k)](z)  \frac{ k\cdot \nabla_v f^{eq}(v)}{v\cdot k-z} - i \frac{ \Lap \left[ \widehat{\mathfrak{S} }(t,k,v) \right](z)}{v\cdot k-z}.\]
Finally we conclude this proof integrating with respect to $v$ and applying Fubini Theorem (with the control \eqref{control_got_g}) to get for all $z\in \mathbb{C}$ with $\Im z>\lambda_0$
\[ \Lap \left[ \int_{\mathbb{R}^d} \widehat{\mathfrak{g}}
(t,k,v) 
\textrm{d}v \right] (z) =\int_{\mathbb{R}^d} \Lap \left[ \widehat{\mathfrak{g}}(t,k,v)  \right] (z) \textrm{d}v. \] 
\end{proof}

If we want to get a closed equation on $\mathfrak{u}$, we have to use Poisson equation
\begin{equation}
\label{Poisson}
\Delta_x \mathfrak{u}(t,x) = - \int_{\mathbb{R}^d} \mathfrak{g}(t,x,v) \textrm{d}v.
\end{equation}
Formally, applying a space Fourier transform and a Laplace transform we would get
\[ |k|^2 \Lap \left[\widehat{\mathfrak{u}}(t,k)\right] =  \Lap \left[\int_{\mathbb{R}^d} \widehat{ \mathfrak{g}}(t,k,v) \textrm{d}v\right].\] 
Consequently, applying \eqref{eq_post_Vlin}, we should get the following dispersion relation
\begin{equation}
\label{rel_disp_lin_gen}
\Lap \left[ \widehat{\mathfrak{u}}(t,k)\right] (z) D_k(z) = -\frac{i}{|k|^2}  \int_{\mathbb{R}^d} \frac{\widehat{\mathfrak{g}_0}(k,v)}{v\cdot k-z}  \textrm{d}v -\frac{i}{|k|^2}  \int_{\mathbb{R}^d} \frac{\Lap \left[ \widehat{\mathfrak{S}}(t,k,v) \right](z)}{v\cdot k-z}  \textrm{d}v,
\end{equation}
where $D_k$ is defined by \eqref{intro_defNkDk}.

\begin{proposition}
\label{exist_lin_gen} 
 Assume $\mathfrak{g}_0 \in C^1({\mathbb{T}_d}\times \mathbb{R}^d)$, $f^{eq}\in C^2(\mathbb{R}^d)$ and $\mathfrak{S}(t,x,v)\in C^1(\mathbb{R}_+^*\times{\mathbb{T}_d}\times \mathbb{R}^d)$  and there exist $\lambda_0>0$, $\mathfrak{d} \in C^0(\mathbb{R}^d)\cap L^1(\mathbb{R}^d)$ satisfying
\[ \forall (t,k,v) \in \mathbb{R}_+^*\times \widehat{{\mathbb{T}_d}} \times \mathbb{R}^d, \ e^{-\lambda_0 t} |\widehat{\mathfrak{S}}(t,k,v)|+|\widehat{\mathfrak{g}_0}(k,v)| + |\nabla_v f^{eq}(v)| \leq \mathfrak{d}(v).\]
Assume there exists $ \mathfrak{u} \in C^1(\mathbb{R}_+^*\times{\mathbb{T}_d})$ such that $e^{ -\lambda_0 t}\mathfrak{u}(t)$ is bounded on $\mathbb{R}_+^*\times{\mathbb{T}_d}$. Furthermore, assume that, for all $k \in \widehat{{\mathbb{T}_d}}\setminus \{0\}$, $\Lap \left[ \widehat{\mathfrak{u}}(t,k) \right](z)$ is a solution of \eqref{rel_disp_lin_gen} for $\Im z > \lambda_0$.
Assume there exists a finite set $\mathfrak{K}\subset \widehat{{\mathbb{T}_d}}$ such that
\[ \forall t\in \mathbb{R}, \forall v\in \mathbb{R}^d, \ k\in \widehat{{\mathbb{T}_d}}\setminus \mathfrak{K} \ \Rightarrow \ \widehat{\mathfrak{g}_0}(k,v)= \widehat{\mathfrak{S}}(t,k,v)=\widehat{\mathfrak{u}}(t,k)=0. \]
If we define $\mathfrak{g}$ by
\begin{multline}
\label{g_Duhamel}
\mathfrak{g}(t,x,v) = \sum_{k \in \mathfrak{K}} e^{ik\cdot(x-vt)} \widehat{\mathfrak{g}_0}(k,v) + i \int_{0}^t e^{i k\cdot(x-v(t-s))} \widehat{\mathfrak{u}}(s,k) k\cdot\nabla f^{eq}(v) \textrm{d}s \\+ \int_{0}^t e^{i k\cdot(x-v(t-s))}  \widehat{\mathfrak{S} }(s,k,v) \textrm{d}s ,
\end{multline}
then $\mathfrak{g}  \in  C^1(\mathbb{R}_{+}^* \times{\mathbb{T}_d} \times \mathbb{R}^d)$ is continuous on $\mathbb{R}_{+} \times{\mathbb{T}_d} \times \mathbb{R}^d$ and
$(\mathfrak{g},\mathfrak{u})$ is a solution of \eqref{VP_lin_gen}.
\end{proposition}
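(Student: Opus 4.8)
The plan is to run the computation of Proposition \ref{prop_anal_lin} in reverse: there we started from a solution $\mathfrak{g}$ of the Vlasov equation and derived the dispersion relation; here we start from $\mathfrak{u}$ satisfying the dispersion relation \eqref{rel_disp_lin_gen}, \emph{define} $\mathfrak{g}$ by the Duhamel formula \eqref{g_Duhamel}, and check that the pair solves \eqref{VP_lin_gen}. First I would verify the regularity claims on $\mathfrak{g}$: since $\mathfrak{K}$ is finite, the sum in \eqref{g_Duhamel} has finitely many terms, and for each $k$ the integrand is built from $\widehat{\mathfrak{g}_0}(k,\cdot)\in C^1$, $f^{eq}\in C^2$, $\widehat{\mathfrak{S}}(\cdot,k,\cdot)\in C^1$ and $\widehat{\mathfrak{u}}(\cdot,k)\in C^1$; differentiation under the integral sign (justified by the $\mathfrak{d}$-domination hypothesis, exactly as in the bound following \eqref{g_Duhamel_fixed_mod}) gives $\mathfrak{g}\in C^1(\mathbb{R}_+^*\times{\mathbb{T}_d}\times\mathbb{R}^d)$ and continuity up to $t=0$, where the integral terms vanish so $\mathfrak{g}(0,x,v)=\sum_{k\in\mathfrak{K}}e^{ik\cdot x}\widehat{\mathfrak{g}_0}(k,v)=\mathfrak{g}_0(x,v)$.

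Next I would check that $\mathfrak{g}$ solves the Vlasov equation, i.e. the first line of \eqref{VP_lin_gen}. This is a direct computation: applying $\partial_t + v\cdot\nabla_x$ to \eqref{g_Duhamel}, the free-transport part $e^{ik\cdot(x-vt)}\widehat{\mathfrak{g}_0}(k,v)$ is annihilated, and for the two Duhamel integrals the operator $\partial_t + v\cdot\nabla_x = \partial_t + i(v\cdot k)$ (on the $k$-th Fourier mode) kills the propagator $e^{ik\cdot(x-v(t-s))}$ and leaves only the boundary contribution at $s=t$, namely $i\,\widehat{\mathfrak{u}}(t,k)\,k\cdot\nabla_v f^{eq}(v)e^{ik\cdot x}$ and $\widehat{\mathfrak{S}}(t,k,v)e^{ik\cdot x}$ respectively. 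Summing over $k\in\mathfrak{K}$ and using the support hypothesis (all data vanish for $k\notin\mathfrak{K}$) reproduces exactly $\nabla_x\mathfrak{u}\cdot\nabla_v f^{eq} + \mathfrak{S}$, so the Vlasov equation holds.

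The one remaining — and only slightly delicate — point is the Poisson equation $\Delta_x\mathfrak{u} + \int_{\mathbb{R}^d}\mathfrak{g}\,\mathrm{d}v = 0$, equivalently $|k|^2\widehat{\mathfrak{u}}(t,k) = \int_{\mathbb{R}^d}\widehat{\mathfrak{g}}(t,k,v)\,\mathrm{d}v$ for each $k$ (and $\widehat{\mathfrak{u}}=0$ for $k\notin\mathfrak{K}$, consistent with the support assumption). Since $\mathfrak{g}$ now solves the Vlasov equation and satisfies the estimate \eqref{control_got_g} (its proof only used \eqref{g_Duhamel_fixed_mod}, which holds here by construction), Proposition \ref{prop_anal_lin} applies and gives \eqref{eq_post_Vlin}. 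Combining \eqref{eq_post_Vlin} with the hypothesis that $\Lap[\widehat{\mathfrak{u}}(t,k)](z)$ solves \eqref{rel_disp_lin_gen} and recalling the definition of $D_k$ in \eqref{intro_defNkDk}, a short algebraic manipulation yields, for $\Im z>\lambda_0$,
\[
|k|^2\,\Lap[\widehat{\mathfrak{u}}(t,k)](z) = \Lap\Big[\int_{\mathbb{R}^d}\widehat{\mathfrak{g}}(t,k,v)\,\mathrm{d}v\Big](z).
\]
The main obstacle is then the \emph{injectivity of the Laplace transform}: I must argue that both sides are Laplace transforms of continuous functions that are $O(e^{\lambda t})$ (true for the left side by the boundedness hypothesis on $e^{-\lambda_0 t}\mathfrak{u}$, and for the right side by \eqref{control_got_g} together with $\mathfrak{d}\in L^1$), so that equality of the transforms on a half-plane forces equality of the functions, i.e. $|k|^2\widehat{\mathfrak{u}}(t,k) = \int_{\mathbb{R}^d}\widehat{\mathfrak{g}}(t,k,v)\,\mathrm{d}v$ for all $t>0$. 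Taking the inverse space Fourier transform (a finite sum over $k\in\mathfrak{K}$) gives the Poisson equation, and by continuity it extends to $t=0$. This completes the verification that $(\mathfrak{g},\mathfrak{u})$ solves \eqref{VP_lin_gen}.
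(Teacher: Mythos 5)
Your proposal is correct and follows the same route as the paper: verify regularity and the Vlasov equation directly from the Duhamel formula, invoke Proposition \ref{prop_anal_lin} to obtain \eqref{eq_post_Vlin}, combine it algebraically with \eqref{rel_disp_lin_gen} to get the Laplace-transformed Poisson relation, and conclude by injectivity of the Laplace transform (Theorem 1.7.3 in \cite{MR2798103}) followed by injectivity of the spatial Fourier transform. The only difference is that you spell out a couple of details (the support hypothesis for $k\notin\mathfrak{K}$, the algebraic step linking the two identities) that the paper treats as immediate.
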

\begin{proof}

By construction of $\mathfrak{g}$ through Duhamel formula \eqref{g_Duhamel}, $\mathfrak{g}$ is obviously a continuous function on $\mathbb{R}_{+} \times{\mathbb{T}_d} \times \mathbb{R}^d$ and $C^1$ on $\mathbb{R}_{+}^* \times{\mathbb{T}_d} \times \mathbb{R}^d$. Furthermore, we may verify by a straightforward calculation that $\mathfrak{g}$ is solution of the Vlasov equation \eqref{Vlasov_equation}. Consequently, we just have to prove that $\mathfrak{g},\mathfrak{u}$ is solution of Poisson equation \eqref{Poisson}.

\medskip

However $\mathfrak{u}$ and $\mathfrak{g}$ satisfy assumptions of Proposition \ref{prop_anal_lin}, so we can apply it. Consequently, we know that if $\lambda>\lambda_0$ then $e^{-\lambda t}\int \widehat{\mathfrak{g}}(t,k,v) \textrm{d}v$ is continuous and bounded and that its Laplace transform satisfies \eqref{eq_post_Vlin}. But since $\Lap [\widehat{\mathfrak{u}}(t,k)]$ is a solution of the dispersion relation \eqref{rel_disp_lin}, we deduce that for all $z\in \mathbb{C}$ such that $\Im z> \lambda_0$ we have
\[ |k|^2 \Lap \left[ \widehat{\mathfrak{u}}(t,k)\right](z) =  \Lap \left[ \int_{\mathbb{R}^d} \widehat{g}(t,k,v) \textrm{d}v \right](z)  .\]
But it is well known that Laplace transform is injective on continuous functions with an exponential order (i.e. bounded by an exponential function), see Theorem $1.7.3$ in \cite{MR2798103}. Consequently, we have for all $t>0$
\[ |k|^2 \widehat{\mathfrak{u}}(t,k) = \int_{\mathbb{R}^d} \widehat{\mathfrak{g}}(t,k,v) \textrm{d}v .\]
Since space Fourier transform is also injective on regular functions, we have proven that $\mathfrak{u},\mathfrak{g}$ is a solution of Poisson equation \eqref{Poisson}.
\end{proof}

\medskip


%

\subsection{Proof of Propositions \ref{propo_existence_lin} and \ref{propo_existence_o2}} We now apply Proposition \ref{exist_lin_gen} for the proof of  Propositions \ref{propo_existence_lin} and \ref{propo_existence_o2}.
\begin{proof}[Proof of Proposition \ref{propo_existence_lin}] First, observe that if $k\in \widehat{{\mathbb{T}_d}}\setminus \left( \{ K \}\cup \{0\} \right)$ then for any $t>0$, we have $\widehat{\psi}(t,k)=0$. Indeed, since $\Lap \left[ \widehat{\psi}(t,k)\right] (z) $ is a solution of \eqref{rel_disp_lin}, we have
\[ D_k(z) \Lap \left[ \widehat{\psi}(t,k)\right] (z)  =0.  \] 
But, we have proven in Lemma \ref{Dk_cool} that $D_k(z)\neq 0$ if $\Im z$ is large enough. Consequently, $\Lap \left[ \widehat{\psi}(t,k)\right] (z)=0$ if $\Im z$ is large enough.
So we deduce by a classical criterion about Laplace transform (see Theorem $1.7.3$ in \cite{MR2798103}) that $\widehat{\psi}(t,k)=0$.

\medskip

We observe on \eqref{g_Duhamel_1} that $g$ is clearly a $C^{\infty}$ function on $\mathbb{R}_+^*\times{\mathbb{T}_d}\times \mathbb{R}^d$. Finally, we just need to apply Proposition \ref{exist_lin_gen} to prove that $(g,\psi)$ is a solution of \eqref{VPL}.
\end{proof}

\begin{proof}[Proof of Proposition \ref{propo_existence_o2}]
Let $\mathfrak{S}$ be defined by
\[ \mathfrak{S}(t,x,v) = \nabla_x \psi(t,x) \cdot \nabla_v g(t,x,v).\]
By construction, it is a $C^{\infty}$ function on $\mathbb{R}_+^*\times{\mathbb{T}_d}\times \mathbb{R}^d$. Since, space Fourier transform of $\psi$ is supported by $K$ (see proof of Proposition \ref{propo_existence_lin}), its space Fourier transform is supported by $K+K$. Furthermore, since $\lambda_1>2\lambda_0$, we can construct, by a straightforward estimation, a continuous function $\mathfrak{d}\in C^0(\mathbb{R}^d)\cap L^1(\mathbb{R}^d)$ such that
\[ \forall v\in \mathbb{R}^d, \ e^{-\lambda_1 t}|\widehat{\mathfrak{S}}(t,k,v)| \leq  \mathfrak{d}(v).\]
In particular, this estimation proves that the right member of \eqref{rel_disp_o2} is well defined if $\Im z\geq \lambda_1$.

\medskip

Now, as in Proposition \ref{propo_existence_lin}, we can first prove that space Fourier transform of $\mu$ is supported by $(K+K) \cup \{0\}$, then observe that $h$ is a $C^{\infty}$ function and conclude that $(h,\mu)$ is a solution of \eqref{VPL} by Proposition \ref{exist_lin_gen}.
\end{proof}

\section{Resolution and expansion of the linearized equation}
\label{section_linear_resolution}
\subsection{Introduction and statement of the result}
In Proposition \ref{propo_existence_lin}, we have proved that it is enough to solve dispersion relation \eqref{rel_disp_lin} to get a solution $(g,\psi)$ to linearized Vlasov-Poisson equation \eqref{VPL}. So the aim of this section is to solve this dispersion relation introducing most of the theoretical tools useful in the resolution of the second order relation \eqref{rel_disp_o2}. In particular, many of them deal with analytic function defined on {\it open sectors}, denoted $\Sigma_\alpha$, with $\alpha\in (0,\pi)$, and defined by  
\[  \Sigma_\alpha = \{  re^{i \beta } \ | \ -\alpha<\beta<\alpha \textrm{ and } r>0 \}. \]

The result we are going to establish in this section is the following.
\begin{proposition}
\label{resol_rel_disp_lin}
Assume $f^{eq}\in \mathscr{E}(\mathbb{R}^d)$ and $g_0$ satisfies Assumption \ref{assump}.
For all $\lambda \in \mathbb{R}$, for all $k\in K$, for all zero point $\omega$ of $D_k$ there exists a polynomial, denoted $P_{k,\omega}$, whose degree is strictly smaller than the multiplicity of $\omega$, $\alpha\in(0,\frac{\pi}2)$ and there exists $r_{k,\lambda}$ an analytic and bounded function on $\Sigma_{\alpha}$ such that the following expansion defines a solution of the dispersion relation \eqref{rel_disp_lin}
\[ \forall t \in \mathbb{R}_+^*, \forall x\in{\mathbb{T}_d}, \ \psi(t,x) = \sum_{k\in K} \sum_{\substack{ D_k(\omega)=0 \\
													\Im \omega \geq \lambda	}} P_{k,\omega}(t) e^{i(k\cdot x-\omega t)} + e^{ik\cdot x} e^{\lambda t} r_{k,\lambda}(t).\]
\end{proposition}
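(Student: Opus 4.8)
The plan is to solve the scalar dispersion relation \eqref{rel_disp_lin} for each fixed $k\in K$ by inverting the Laplace transform, i.e. by exhibiting an explicit function $\psi$ whose Laplace transform equals $M_k(z)=N_k(z)/D_k(z)$ for $\Im z$ large, and then reading off its asymptotic expansion from the poles of $M_k$. First I would record, using Remark \ref{remark_NkDk} (to be proven via Corollary \ref{cor_Dk_cool} and Proposition \ref{Nk_nice}), that $N_k$ and $D_k$ are entire and that $D_k$ has only finitely many zeros with imaginary part $\geq\lambda$; moreover, from the representation \eqref{intro_defNkDk} and the decay of $\widehat{g_0}(k,\cdot)\in\mathscr{E}(\mathbb{R}^d)$, one gets that $M_k$ is meromorphic with controlled growth on half-planes $\{\Im z\geq\lambda\}$, and that $M_k(z)\to 0$ as $\Im z\to+\infty$. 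The heart of the argument is a contour-shift (Bromwich / Fourier–Mellin) inversion: write
\[
\widehat{\psi}(t,k)=\frac{1}{2\pi}\int_{\Im z=\sigma} M_k(z)\,e^{-izt}\,\mathrm{d}z
\]
for $\sigma>\lambda_0$, then push the horizontal contour down from height $\sigma$ to height $\lambda$. The residue theorem produces, for each pole $\omega$ of $M_k$ with $\Im\omega\geq\lambda$ (necessarily a zero of $D_k$, since $N_k$ is entire), a contribution $-i\,\mathrm{Res}_{z=\omega}\big(M_k(z)e^{-izt}\big)$, which by \eqref{intro_art_pas_fou} is exactly $P_{k,\omega}(t)e^{-i\omega t}$ with $P_{k,\omega}$ the principal-part polynomial of $M_k$ at $\omega$, of degree $<n_{k,\omega}$; the residual integral along $\{\Im z=\lambda\}$ defines $e^{\lambda t}r_{k,\lambda}(t)$ with $r_{k,\lambda}(t)=\frac{1}{2\pi}\int_{\mathbb{R}}M_k(x+i\lambda)e^{-ixt}\,\mathrm{d}x$.

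The main obstacle is controlling this last integral: one must show it converges, that $t\mapsto e^{-i\lambda t}\cdot(\text{integral})$ is bounded, and—for the statement as phrased—that it extends to a bounded analytic function on a sector $\Sigma_\alpha$. Convergence and boundedness follow from integrability of $x\mapsto M_k(x+i\lambda)$ on $\mathbb{R}$, which is where the assumption $f^{eq}, \widehat{g_0}(k,\cdot)\in\mathscr{E}(\mathbb{R}^d)$ is essential: the super-exponential decay of the Fourier/Hilbert transforms involved (cf. Lemma \ref{Hilb_to_Four} and Theorem \ref{theo_anal_rep}) forces $N_k(x+i\lambda)$ to decay fast in $x$ while $D_k(x+i\lambda)$ stays bounded away from $0$ for $|x|$ large (its only zeros near the line being finitely many), so $M_k(x+i\lambda)\in L^1(\mathbb{R}_x)$; the sectorial analytic continuation of $r_{k,\lambda}$ comes from the fact that $M_k$ itself, built from $\mathscr{E}(\mathbb{R}^d)$ data, extends analytically into a sector in $z$, which lets one rotate the contour and obtain analyticity of $r_{k,\lambda}$ on $\Sigma_\alpha$ with uniform bounds.

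Finally I would verify that the candidate $\psi(t,x)=\sum_{k\in K}\big(\sum_{D_k(\omega)=0,\ \Im\omega\geq\lambda}P_{k,\omega}(t)e^{-i\omega t}+e^{\lambda t}r_{k,\lambda}(t)\big)e^{ik\cdot x}$ genuinely solves \eqref{rel_disp_lin}: by construction its Laplace transform in $t$ (valid for $\Im z>\lambda_0$, say) reassembles $M_k(z)=N_k(z)/D_k(z)$, so multiplying by $D_k(z)$ gives $N_k(z)$, which is the right-hand side of \eqref{rel_disp_lin}; here one uses that the sum of the residue polynomials has Laplace transform equal to the principal part of $M_k$ at each $\omega$ (formula \eqref{intro_art_pas_fou}) and that the integral remainder has Laplace transform equal to the remaining holomorphic part—this is essentially a partial-fraction/Mittag-Leffler decomposition of $M_k$ on the half-plane, combined with injectivity of the Laplace transform (Theorem $1.7.3$ in \cite{MR2798103}). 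Reality of $\psi$ follows from the symmetry $\widehat{g_0}(-k,v)=\overline{\widehat{g_0}(k,v)}$ and $D_{-k}(z)=\overline{D_k(-\bar z)}$, pairing the $k$ and $-k$ terms. The degree bounds $\deg P_{k,\omega}<n_{k,\omega}$ are immediate since $\omega$ is a pole of $M_k$ of order at most $n_{k,\omega}$ (as $N_k(\omega)$ may be nonzero, the order is exactly $n_{k,\omega}$ generically, and never more).
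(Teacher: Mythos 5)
Your overall strategy (locate the poles of $M_k=N_k/D_k$, subtract their principal parts, treat what is left as a remainder) is the same as the paper's, but the mechanism you propose for inverting the Laplace transform — Bromwich contour shift plus residues, with the shifted line giving $r_{k,\lambda}$ — has a genuine gap precisely at the step you flag as "the main obstacle."

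You claim that $x\mapsto M_k(x+i\lambda)$ is in $L^1(\mathbb{R})$ because $N_k(x+i\lambda)$ "decays fast" and $D_k(x+i\lambda)$ stays away from zero. The first part is false. From $N_k(z)=\frac{1}{|k|^2}\Lap\left[\Four[\widehat{g_0}(k,\cdot)](kt)\right](z)$ (Lemma \ref{Hilb_to_Four}), integration by parts gives
\[
N_k(z)=\frac{i}{|k|^2}\,\frac{\Four[\widehat{g_0}(k,\cdot)](0)}{z}+\mathcal{O}\!\left(\frac{1}{|z|^2}\right)
\]
as $|z|\to\infty$ on a horizontal line; the leading $1/z$ term does not vanish unless $\int\widehat{g_0}(k,v)\,\mathrm{d}v=0$. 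So $N_k$ (and hence $M_k$, and also $M_k-Q$ after subtracting principal parts, since those too are $\mathcal{O}(1/|z|)$) decays like $1/|x|$ on the line $\Im z=\lambda$, which is not integrable. Your Bromwich integral therefore does not converge absolutely; turning it into a principal value makes boundedness of $e^{-\lambda t}\,\widehat{\psi}(t,k)$ and the sectorial analytic extension of $r_{k,\lambda}$ genuinely nontrivial, and the rotation-of-contour argument you sketch would again need absolute convergence.

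This borderline $\mathcal{O}(1/|z|)$ decay is exactly what the paper's machinery is built to handle. Proposition \ref{Dk_cool} and Proposition \ref{Nk_nice} give the sharp sectorial bound $\sup_{z\in i\lambda_0+i\Sigma_{\beta+\pi/2}}|z-i\lambda_0|\,|N_k(z)|<\infty$ (and similarly for $D_k-1$), and Lemma \ref{lemma_key} then constructs the rational function $Q$ out of the principal parts at the finitely many poles with $\Im\omega\geq\lambda$, shows that $M_k-Q$ still satisfies a bound of the form $|(z-i\lambda)(M_k(z)-Q(z))|\leq M_3$ on a slightly smaller sector, and finally invokes the analytic representation theorem, Theorem \ref{theo_anal_rep}. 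That theorem characterizes precisely which holomorphic functions with $\mathcal{O}(1/|z-i\lambda_0|)$ decay on a sector $i\lambda_0+i\Sigma_{\gamma+\pi/2}$ are Laplace transforms of analytic functions bounded on $\Sigma_\beta$ after multiplying by $e^{-\lambda_0 t}$; it replaces the line-integral inversion you attempt by a criterion that needs only the $\mathcal{O}(1/|z|)$ decay together with the wider-than-half-plane analyticity. If you want to salvage your route, you should replace the $L^1$ claim by an appeal to Theorem \ref{theo_anal_rep} applied to $M_k-Q$ (which is what Lemma \ref{lemma_key} does), rather than try to make the Fourier–Mellin integral converge directly.

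Two smaller points: the degree bound $\deg P_{k,\omega}<n_{k,\omega}$ and the final verification that the resulting $\psi$ solves \eqref{rel_disp_lin} via injectivity of the Laplace transform are both fine and match the paper; and the reality of $\psi$ is not part of Proposition \ref{resol_rel_disp_lin} itself, so that remark, while correct, is not needed here.
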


\medskip

This proposition will be proven at the end of this section. First, we introduce some notations and many useful theoretical tools.

\subsection{Definition of $N_k$ and theoretical tools} The right member of the dispersion relation \eqref{rel_disp_lin} is very important in our study. We denote it $N_k(z)$. More precisely, it is an analytic function defined, when $\Im z>0$ by
\[ N_k(z) = -\frac{i}{|k|^2} \int \frac{\widehat{g_0}(k,v)}{v\cdot k-z} \textrm{d}v.\]
In the first part of this proof we study the regularity and the behavior of $D_k$ and $N_k$. However, we need to introduce some classical results on Laplace transform.

\medskip

First, consider the following Theorem that is very useful to invert Laplace transforms and to control it.

\begin{theorem} (Analytic representation)\\
\label{theo_anal_rep} 
Let $\alpha \in(0,\frac{\pi}2)$, $\lambda_0\in \mathbb{R}$ and $q:i(\lambda_0,\infty)\to \mathbb{C}$. The following assertions are equivalent:
\begin{enumerate}[(i)]
\item There exists a holomorphic function $f:\Sigma_{\alpha} \to \mathbb{C}$ such that
\[  \forall 0<\beta<\alpha, \sup_{z\in \Sigma_\beta} |e^{-\lambda_0 z} f(z)|<\infty \textrm{ and } \forall \lambda>\lambda_0, \ q(i\lambda)=\Lap [f](i\lambda).\]
\item The function $q$ has a holomorphic extension $\widetilde{q}:i\lambda_0 + i\Sigma_{\alpha+\frac{\pi}2}\to \mathbb{C}$ such that
\[ \forall 0<\gamma<\alpha, \ \sup_{z \in i\lambda_0 + i\Sigma_{\gamma+\frac{\pi}2}} |(z-i\lambda_0)\widetilde{q}(z)|<\infty. \]
\end{enumerate}
\end{theorem}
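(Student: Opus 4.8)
The plan is to prove the equivalence $(i)\Leftrightarrow(ii)$ by going through the two implications separately, using the standard machinery relating Laplace transforms to Hardy-type spaces on sectors, adapted to the angle bookkeeping dictated by the definition of $\Sigma_\alpha$. Throughout I would reduce to the case $\lambda_0=0$ by replacing $f$ with $e^{-\lambda_0 z}f(z)$ and $q(i\lambda)$ with $q(i\lambda+i\lambda_0)$, which shifts the half-line $i(\lambda_0,\infty)$ to $i(0,\infty)$ and the sector $i\lambda_0+i\Sigma_{\alpha+\frac\pi2}$ to $i\Sigma_{\alpha+\frac\pi2}$; note that $i\Sigma_{\alpha+\pi/2}$ is exactly the open set $\{re^{i\beta}: r>0,\ 0<\beta<\pi\}$ rotated, i.e. a sector of half-opening $\alpha+\frac\pi2>\frac\pi2$ bisected by the positive imaginary axis, so it genuinely contains a neighborhood of $i(0,\infty)$.

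For $(i)\Rightarrow(ii)$: given $f$ holomorphic on $\Sigma_\alpha$ with $\sup_{z\in\Sigma_\beta}|e^{-\lambda_0 z}f(z)|<\infty$ for each $\beta<\alpha$, I would show that $\widetilde q(z):=\int_0^\infty f(t)e^{izt}\,\mathrm dt$ — a priori defined and holomorphic for $\Im z>\lambda_0$ by the bound at $\beta=0^+$ — extends holomorphically to $i\lambda_0+i\Sigma_{\alpha+\pi/2}$. The key device is contour rotation: for $0<\beta<\alpha$ one replaces the integration ray $\mathbb R_+$ by the rotated ray $e^{i\beta}\mathbb R_+$, which by Cauchy's theorem (and the exponential decay of $e^{-\lambda_0 z}f(z)$ along arcs, the part requiring the uniform bound on the closed subsector $\overline{\Sigma_\beta}$) gives $\widetilde q(z)=\int_0^\infty f(e^{i\beta}s)e^{iz e^{i\beta}s}e^{i\beta}\,\mathrm ds$, an integral that converges and is holomorphic whenever $\Im(ze^{i\beta})>\lambda_0\cos\beta$ roughly — more precisely whenever $z e^{i\beta}$ lies in the appropriate shifted upper half-plane; letting $\beta$ range over $(-\alpha,\alpha)$ sweeps out the whole sector $i\lambda_0+i\Sigma_{\alpha+\pi/2}$, and these definitions patch by analytic continuation. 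The quantitative bound $\sup|(z-i\lambda_0)\widetilde q(z)|<\infty$ on each strictly smaller sector $i\lambda_0+i\Sigma_{\gamma+\pi/2}$ then follows by estimating the rotated integral: $|(z-i\lambda_0)\widetilde q(z)|\le |z-i\lambda_0|\,\|e^{-\lambda_0\cdot}f\|_{L^\infty(\Sigma_\beta)}\int_0^\infty e^{-(\Im(ze^{i\beta})-\lambda_0\cos\beta)s}\,\mathrm ds$, and the remaining integral equals $(\Im(ze^{i\beta})-\lambda_0\cos\beta)^{-1}$, which is comparable to $|z-i\lambda_0|^{-1}$ uniformly on the smaller sector after choosing $\beta$ close enough to the boundary depending on $\gamma$.

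For $(ii)\Rightarrow(i)$: given the holomorphic extension $\widetilde q$ on $i\lambda_0+i\Sigma_{\alpha+\pi/2}$ with $|(z-i\lambda_0)\widetilde q(z)|$ bounded on strict subsectors, I would recover $f$ by a (complex) inverse Laplace / Bromwich-type integral, $f(w)=\frac{1}{2\pi}\int_{\Gamma}\widetilde q(z)e^{-izw}\,\mathrm dz$, where $\Gamma$ is a horizontal line $\Im z=\lambda$ with $\lambda>\lambda_0$, or better a rotated/deformed contour. The decay $|\widetilde q(z)|\lesssim |z-i\lambda_0|^{-1}$ is just barely not enough for absolute convergence, so the standard fix is to integrate by parts once, writing $f(w)=\frac{1}{2\pi i w}\int_\Gamma \widetilde q\,'(z)e^{-izw}\,\mathrm dz$ after noting $\widetilde q\,'$ decays like $|z-i\lambda_0|^{-2}$ on strict subsectors by the Cauchy estimates applied inside the sector — or alternatively to exploit the sectorial (as opposed to merely half-plane) holomorphy by tilting $\Gamma$ into the sector so that $e^{-izw}$ contributes genuine exponential decay, which is what lets one prove $f$ extends to $\Sigma_\alpha$ and satisfies the $L^\infty$ bounds $\sup_{\Sigma_\beta}|e^{-\lambda_0 z}f(z)|<\infty$. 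One then checks $\Lap[f](i\lambda)=q(i\lambda)$ for $\lambda>\lambda_0$ by Fubini, closing the loop, and uniqueness of the inverse Laplace transform (Theorem $1.7.3$ in the cited reference, already invoked elsewhere in the paper) guarantees consistency.

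The main obstacle is the borderline decay rate: the hypothesis only gives $\widetilde q(z)=O(|z-i\lambda_0|^{-1})$, which sits exactly at the threshold where the naive Bromwich integral fails to converge absolutely and where $f$ would a priori be only a tempered distribution rather than a bona fide holomorphic function with the claimed uniform bounds. Turning this $O(1/|z|)$ into the pointwise $L^\infty$ control of $e^{-\lambda_0 z}f(z)$ on every $\Sigma_\beta$ is the crux; it is precisely here that the \emph{sectorial} nature of the extension (opening angle $\alpha+\frac\pi2$ strictly bigger than $\frac\pi2$) must be used, via contour tilting into the sector to gain the missing exponential factor — rotating $\Gamma$ by an angle up to $\alpha$ converts $e^{-izw}$ along the contour into something decaying like $e^{-c|z|\,|w|\sin(\cdot)}$, restoring convergence and simultaneously yielding holomorphy of $w\mapsto f(w)$ on the correspondingly rotated half-line, hence on all of $\Sigma_\alpha$ after varying the tilt. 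The rest is careful but routine: Cauchy's theorem to justify all contour deformations (using the decay on arcs), Fubini to identify the transforms, and the angle arithmetic $\pm\frac\pi2$ relating ``half-plane $\Im z>\lambda_0$'' to ``sector $i\lambda_0+i\Sigma_{\pi/2}$''.
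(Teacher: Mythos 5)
The paper does not prove this theorem; it simply cites Theorem 2.6.1 of Arendt--Batty--Hieber. Your general strategy — rotating $\mathbb R_+$ to rays $e^{i\beta}\mathbb R_+$ to extend $\mathcal L[f]$ for $(i)\Rightarrow(ii)$, and inverting along a Bromwich contour tilted into the sector for $(ii)\Rightarrow(i)$ — is exactly the strategy of that reference, so no ``different route'' is at issue. Your $(i)\Rightarrow(ii)$ sketch is correct and in fact yields the quantitative corollary stated immediately after the theorem (the constant $1/\sin(\beta-\gamma)$ drops out of $\Re(we^{\pm i\beta})\geq|w|\sin(\beta-\gamma)$).

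Your $(ii)\Rightarrow(i)$ sketch, however, has a genuine gap at precisely the step you yourself single out as the crux. With a tilted contour $\Gamma$ held \emph{fixed} (vertex at some $i\lambda'$, $\lambda'>\lambda_0$), the inversion integral for $f(w)$ does converge and define a holomorphic function on $\Sigma_\alpha$, but the bound it gives is
\[ |f(w)|\ \lesssim\ \int_\Gamma \frac{e^{-c|z|\,|w|}}{|z-i\lambda_0|}\,|\mathrm{d}z|\ \asymp\ \int_{c'|w|}^{\infty}\frac{e^{-u}}{u}\,\mathrm{d}u\ \asymp\ \log\frac1{|w|}\qquad (w\to 0), \]
which does \emph{not} give $\sup_{\Sigma_\beta}|e^{-\lambda_0 w}f(w)|<\infty$. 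Your alternative device — integrate by parts once and use $\widetilde q\,'=O(|z-i\lambda_0|^{-2})$ — is worse near $w=0$: the $1/w$ prefactor leaves only $|f(w)|=O(1/|w|)$. Neither fix produces the required uniform bound on $\Sigma_\beta$, whose whole content (for $\lambda_0\ge 0$) is boundedness as $w\to 0$. The missing idea is to let the contour \emph{depend on} $w$: place the vertex of the $V$ at roughly $i\lambda_0+i/|w|$, so that $|z-i\lambda_0|\gtrsim 1/|w|$ on all of $\Gamma$. Then the substitution $u=|z-i\lambda_0|\,|w|$ keeps the lower limit bounded away from $0$ and yields a $w$-independent estimate $\int_0^\infty(1+u)^{-1}e^{-cu}\,\mathrm{d}u$, while $e^{\lambda'w}=e^{\lambda_0 w}\cdot O(1)$ costs only a constant. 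Contour independence is, as you say, Cauchy's theorem plus the decay of $\widetilde q$ on arcs. Without this $w$-dependent vertex the argument for $(ii)\Rightarrow(i)$ does not close.
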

\begin{proof}
See Theorem $2.6.1$ in \cite{MR2798103} page $87$.
\end{proof}

\begin{remark}
Note that if $e^{-\lambda_0t}f(t)$ is bounded on $\mathbb{R}_+^*$ then for $\lambda>\lambda_0$, $e^{-\lambda t}f(t)\in L^1(\mathbb{R}_+)$ and so $\mathcal{L}[f]$ is well defined for $\Im(z)>\lambda_0$, which is the set $i\lambda_0+i\Sigma_{\frac{\pi}2}$.
\end{remark}
There is a direct corollary of the proof of Theorem \ref{theo_anal_rep} that is useful in our study.
\begin{corollary} Assume that conclusion of Theorem  \ref{theo_anal_rep} holds. Then for all $0<\gamma<\beta<\alpha$, we have
\[  \sup_{z \in i\lambda_0 + i\Sigma_{\gamma+\frac{\pi}2}} |(z-i\lambda_0)\widetilde{q}(z)| \leq \frac1{\sin(\beta-\gamma)} \sup_{z\in \Sigma_\beta} |e^{-\lambda_0 z} f(z)|. \]
\end{corollary}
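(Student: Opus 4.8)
The plan is to read the estimate off the construction of $\widetilde q$ used in the proof of Theorem \ref{theo_anal_rep}, where the extension is obtained by rotating the contour of the Laplace integral. Write $M_\beta := \sup_{z\in\Sigma_\beta}|e^{-\lambda_0 z}f(z)|$ (finite by hypothesis). First I would recall from that proof that for $z\in i\lambda_0+i\Sigma_{\gamma+\frac\pi2}$ and every admissible angle $\theta\in(-\beta,\beta)$ one has the rotated representation
\[
\widetilde q(z)=e^{i\theta}\int_0^{\infty} f(se^{i\theta})\,e^{izse^{i\theta}}\,\diff s ,
\]
the integral being absolutely convergent, independent of the admissible $\theta$ by Cauchy's theorem (the arc at infinity contributes nothing, since the integrand decays exponentially there), and reducing to $\Lap[f](z)=q(z)$ when $\Im z>\lambda_0$; hence it represents the extension $\widetilde q$ on all of $i\lambda_0+i\Sigma_{\gamma+\frac\pi2}$.

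Next I would parametrize $z-i\lambda_0=\rho e^{i\varphi}$ with $\rho>0$, noting that $z\in i\lambda_0+i\Sigma_{\gamma+\frac\pi2}$ amounts exactly to $\varphi\in(-\gamma,\pi+\gamma)$. A one-line computation gives $\Re(izse^{i\theta})=-\lambda_0 s\cos\theta-\rho s\sin(\varphi+\theta)$, while for $\theta\in(-\beta,\beta)$ one has $|f(se^{i\theta})|\le M_\beta e^{\lambda_0 s\cos\theta}$. The two $\lambda_0\cos\theta$ terms cancel, so the integrand is bounded by $M_\beta e^{-\rho s\sin(\varphi+\theta)}$, and therefore
\[
|\widetilde q(z)|\le M_\beta\int_0^{\infty}e^{-\rho s\sin(\varphi+\theta)}\,\diff s=\frac{M_\beta}{\rho\,\sin(\varphi+\theta)}
\]
for every $\theta\in(-\beta,\beta)$ with $\sin(\varphi+\theta)>0$ — which is precisely the condition making the rotated integral converge. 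Multiplying by $\rho=|z-i\lambda_0|$ yields $|(z-i\lambda_0)\widetilde q(z)|\le M_\beta/\sin(\varphi+\theta)$, so it remains to choose $\theta$ optimally.

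Finally I would prove the elementary trigonometric fact that for every $\varphi\in(-\gamma,\pi+\gamma)$,
\[
\sup_{\theta\in(-\beta,\beta)}\sin(\varphi+\theta)\ \ge\ \sin(\beta-\gamma).
\]
Here $\theta\mapsto\varphi+\theta$ sweeps the open interval $(\varphi-\beta,\varphi+\beta)$ of length $2\beta$, and $0<\beta-\gamma<\frac\pi2$. If $\frac\pi2\in[\varphi-\beta,\varphi+\beta]$ the supremum equals $1$; otherwise, using the reflection $x\mapsto\pi-x$ to reduce to the case $\varphi\le\frac\pi2$, one has $\varphi+\beta\in(\beta-\gamma,\frac\pi2)$ and $\sin$ is increasing there, so the supremum is $\ge\sin(\varphi+\beta)>\sin(\beta-\gamma)$. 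Combining this with the previous bound and taking suprema over $\theta$ and then over $z\in i\lambda_0+i\Sigma_{\gamma+\frac\pi2}$ gives the corollary. The only mildly delicate point I anticipate is the careful bookkeeping of the rotated-contour representation and its range of validity (which is already contained in the proof of Theorem \ref{theo_anal_rep}); once that is granted, the estimate is immediate, which is why it is stated as a corollary of the proof rather than of the theorem.
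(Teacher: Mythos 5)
Your proof is correct. The paper itself gives no argument for this corollary — it merely asserts that it follows from the proof of Theorem \ref{theo_anal_rep}, which it cites from \cite{MR2798103} — and the rotated-contour representation of $\widetilde q$ together with the angular optimization $\sup_{\theta\in(-\beta,\beta)}\sin(\varphi+\theta)\ge\sin(\beta-\gamma)$ that you supply is exactly that omitted argument, carried out correctly (including the cancellation of the $\lambda_0\cos\theta$ terms and the care needed at the endpoints of the open $\theta$-interval).
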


\medskip

Then, we observe that $D_k$ and $N_k$ are defined through a integral operator whose kernel is $\frac1{v\cdot k-z}$. The following lemma links this operator to more classical ones.
\begin{lemma}
\label{Hilb_to_Four} Let $f\in L^1(\mathbb{R}^d)$ and $k\in \mathbb{R}^d\setminus \{0\}$ then for all $z\in \mathbb{C}$ with $\Im z>0$
\[ \int_{\mathbb{R}^d} \frac{f(v)}{k\cdot v-z} \textrm{d}v = i \Lap\left[ \Four [f](kt) \right](z). \]
\end{lemma}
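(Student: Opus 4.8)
The proof I would give is a direct computation: unfold the two transforms and exchange the order of integration. First I would record that, since $|\Four[f](\xi)|\le \|f\|_{L^1(\mathbb{R}^d)}$ for every $\xi\in\mathbb{C}^d$, the map $t\mapsto \Four[f](kt)$ is bounded on $\mathbb{R}_+^*$; hence its Laplace transform is well defined for every $z$ with $\Im z>0$ (take $\lambda=0$ in the definition of $\Lap$), so both sides of the claimed identity make sense. Then, using the conventions $\Lap[u](z)=\int_0^\infty u(t)e^{izt}\,\textrm{d}t$ and $\Four[f](\xi)=\int_{\mathbb{R}^d} f(v)e^{-iv\cdot\xi}\,\textrm{d}v$, I would write
\[
\Lap\big[\Four[f](kt)\big](z)=\int_0^\infty\Big(\int_{\mathbb{R}^d} f(v)\,e^{-i(v\cdot k)t}\,\textrm{d}v\Big)e^{izt}\,\textrm{d}t .
\]

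The key step is to interchange the two integrals by Fubini's theorem. This is legitimate because, setting $\lambda=\Im z>0$, one has the pointwise bound $\big|f(v)e^{-i(v\cdot k)t}e^{izt}\big|=|f(v)|\,e^{-\lambda t}$, which is integrable on $\mathbb{R}^d\times\mathbb{R}_+$ with total mass $\|f\|_{L^1}/\lambda<\infty$. After the exchange we are left with the product structure
\[
\int_{\mathbb{R}^d} f(v)\Big(\int_0^\infty e^{i(z-v\cdot k)t}\,\textrm{d}t\Big)\textrm{d}v ,
\]
and since $\Im(z-v\cdot k)=\lambda>0$ for every real $v$, the inner integral converges and equals $\big[e^{i(z-v\cdot k)t}/\big(i(z-v\cdot k)\big)\big]_{t=0}^{\infty}=i/(z-v\cdot k)=-i/(v\cdot k-z)$.

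Substituting this back gives $\Lap[\Four[f](kt)](z)=-i\int_{\mathbb{R}^d} f(v)/(v\cdot k-z)\,\textrm{d}v$, and multiplying both sides by $i$ (using $i\cdot(-i)=1$, equivalently $1/(-i)=i$) yields exactly the asserted formula. I do not expect any real obstacle: the only points needing a sentence of justification are the boundedness of $\Four[f]$ (which makes the Laplace transform meaningful) and the $L^1(\mathbb{R}^d\times\mathbb{R}_+)$ bound licensing Fubini; everything else is the elementary evaluation of a one-dimensional exponential integral, with the only care being the sign and factor-of-$i$ bookkeeping inherent to these transform conventions. It is worth stressing that this lemma is the workhorse for what follows, since it recasts $D_k$, $N_k$, and more generally any operator with kernel $1/(v\cdot k-z)$ as a Laplace transform of a Fourier transform, which, under the hypothesis $f^{eq},\widehat{g_0}(k,\cdot)\in\mathscr{E}(\mathbb{R}^d)$, is entire with good growth and thus amenable to the analytic-representation theorem.
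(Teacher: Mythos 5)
Your proof is correct. You unfold the Laplace transform, justify Fubini with the bound $|f(v)e^{-i(v\cdot k)t}e^{izt}|=|f(v)|e^{-\Im z\, t}$, and evaluate the inner exponential integral — the signs and factor of $i$ all check out. The paper takes a slightly different (though equivalent) route: instead of exchanging integrals, it introduces the auxiliary function $F(t)=\int_{\mathbb{R}^d}\frac{f(v)}{k\cdot v - z}e^{-i(k\cdot v - z)t}\,\textrm{d}v$, differentiates under the integral sign to get $F'(t)=-i\,\Four[f](kt)e^{izt}$, observes $F(t)\to 0$ as $t\to\infty$ because $\Im z>0$, and then writes $F(0)=-\int_0^\infty F'(t)\,\textrm{d}t=i\Lap[\Four[f](kt)](z)$. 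Both arguments hinge on the same integrability fact; the paper's version just packages the Fubini step as a derivative-plus-FTC manipulation. Your direct Fubini computation is arguably the more transparent of the two and is equally rigorous given the $L^1(\mathbb{R}^d\times\mathbb{R}_+)$ bound you supply.
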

\begin{proof}
First, remark that since $f\in L^1(\mathbb{R}^d)$, $t\rightarrow \Four [f](k t)= \int_{\mathbb{R}^d} f(v) e^{-itv\cdot k} \textrm{d}v$ is a continuous and bounded function, so its Laplace transform is well defined if $\Im z>0$. Now, consider the following function
\[ F(t) =  \int_{\mathbb{R}^d} \frac{f(v)}{k\cdot v-z} e^{-i (k\cdot v -z)t}\textrm{d}v.\]
Since $f\in L^1(\mathbb{R}^d)$, it is a regular function and we have
\[ F'(t) = -i \int_{\mathbb{R}^d} f(v) e^{-i (k\cdot v -z)t}\textrm{d}v = -i \Four [f](kt) e^{izt}. \]
But, since $\Im z > 0$ we observe that $F(t)$ goes to $0$ when $t$ goes to $+\infty$. Consequently, we get
\[ F(0) = -\int_0^\infty F'(t) \textrm{d}t = i \int_0^{\infty} \Four [f](kt) e^{izt} \textrm{d}t =  i \Lap\left[ \Four [f](kt) \right](z). \]
\end{proof}

\medskip

%
%

\subsection{Estimations for $D_k$ and $N_k$}
With Lemma \ref{Hilb_to_Four}, we can write $D_k$ and $N_k$ as Laplace transforms. So, in the following proposition, we can prove their analyticity using the analytic representation theorem (Theorem \ref{theo_anal_rep}). In particular, we prove and extend Remark \ref{remark_NkDk}.
\begin{proposition}
\label{Dk_cool}
If $f^{eq}\in \mathscr{E}(\mathbb{R}^d)$ then
\begin{itemize}
\item for all $k\in  \mathbb{R}^d \setminus \{0\}$, $D_k$ is an entire function,
\item there exists $\alpha \in (0,\frac{\pi}2)$ such that for all $0<\gamma<\alpha$ and for all $\lambda_0\in \mathbb{R}$ there exists $C>0$ satisfying
\[ \forall k\in  \mathbb{R}^d \setminus \{0\},\forall z\in i|k|\lambda_0+i\Sigma_{\gamma +\frac{\pi}2}, \ |D_k(z)-1|\leq \frac{C}{|k| |z-i|k|\lambda_0|}. \]
\end{itemize}
\end{proposition}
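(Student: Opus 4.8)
The plan is to exhibit $D_k-1$ as a Laplace transform and then feed it into the analytic representation theorem (Theorem~\ref{theo_anal_rep}) and its corollary, with the parameter $\lambda_0$ there replaced by $|k|\lambda_0$. The first step is the identity $D_k(z)-1=\Lap[f_k](z)$ for $\Im z>0$, where $f_k(t):=t\,\Four[f^{eq}](kt)$. Indeed, since $f^{eq}\in\mathscr{S}(\mathbb{R}^d)$ we have $k\cdot\nabla_v f^{eq}\in L^1(\mathbb{R}^d)$, so Lemma~\ref{Hilb_to_Four} applies to it; using $\Four[\partial_{v_j}f^{eq}](\xi)=i\xi_j\Four[f^{eq}](\xi)$, hence $\Four[k\cdot\nabla_v f^{eq}](\xi)=i(k\cdot\xi)\Four[f^{eq}](\xi)$, one gets for $\Im z>0$
\[
\int_{\mathbb{R}^d}\frac{k\cdot\nabla_v f^{eq}(v)}{v\cdot k-z}\,\textrm{d}v
= i\,\Lap\big[\Four[k\cdot\nabla_v f^{eq}](kt)\big](z)
= i\,\Lap\big[\,i|k|^2\,t\,\Four[f^{eq}](kt)\big](z)
= -|k|^2\,\Lap[f_k](z),
\]
and plugging this into the definition of $D_k$ (see \eqref{intro_defNkDk}) gives the claimed identity.

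Next I would check that $f_k$ satisfies assertion (i) of Theorem~\ref{theo_anal_rep}, with $|k|\lambda_0$ in the role of $\lambda_0$. Because $f^{eq}\in\mathscr{E}(\mathbb{R}^d)$, its Fourier transform $\Four[f^{eq}]$ is entire on $\mathbb{C}^d$; composing with the complex-linear map $z\mapsto kz$ shows $f_k$ is entire on $\mathbb{C}$, in particular holomorphic on $\Sigma_\alpha$ with $\alpha\in(0,\frac{\pi}{2})$ the constant furnished by \eqref{def_Erd}. Now fix $0<\gamma<\alpha$, set $\beta:=\frac{1}{2}(\gamma+\alpha)\in(\gamma,\alpha)$ and $\lambda:=|\lambda_0|+1$. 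For $z=re^{i\theta}\in\Sigma_\beta$ we may write $kz=(kr)e^{i\theta}$ with $kr\in\mathbb{R}^d$ and $|kr|=|k|r$, so \eqref{def_Erd} gives $|\Four[f^{eq}](kz)|\le C_{\lambda,\beta}\,e^{-\lambda|k|r}$ where $C_{\lambda,\beta}:=\sup_{x\in\mathbb{R}^d}\sup_{|\theta|<\beta}e^{\lambda|x|}|\Four[f^{eq}](e^{i\theta}x)|<\infty$. Since $|\lambda_0\cos\theta|\le|\lambda_0|$ we have $\lambda_0\cos\theta+\lambda\ge 1$, hence, using $se^{-s}\le e^{-1}$ for $s\ge 0$,
\[
|e^{-|k|\lambda_0 z}f_k(z)|
= e^{-|k|\lambda_0 r\cos\theta}\,r\,|\Four[f^{eq}](kz)|
\le C_{\lambda,\beta}\,r\,e^{-|k|r(\lambda_0\cos\theta+\lambda)}
\le C_{\lambda,\beta}\,r\,e^{-|k|r}
\le \frac{C_{\lambda,\beta}}{e\,|k|},
\]
so that $\sup_{z\in\Sigma_\beta}|e^{-|k|\lambda_0 z}f_k(z)|\le C_{\lambda,\beta}/(e|k|)$, uniformly in $k\in\mathbb{R}^d\setminus\{0\}$.

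Finally, Theorem~\ref{theo_anal_rep}(ii) provides a holomorphic extension of $D_k-1=\Lap[f_k]$ from $i(|k|\lambda_0,\infty)$ to $i|k|\lambda_0+i\Sigma_{\alpha+\pi/2}$. Each such domain contains $\{w:\Im w\ge|k|\lambda_0\}\setminus\{i|k|\lambda_0\}$, so letting $\lambda_0\to-\infty$ they exhaust $\mathbb{C}$; since the extensions are mutually compatible (they all restrict to $\Lap[f_k]$ on a common half-plane) they glue, by uniqueness of analytic continuation, into an entire function, and hence $D_k$ is entire. For the quantitative estimate, the corollary of Theorem~\ref{theo_anal_rep}, applied with $\gamma<\beta<\alpha$ and $|k|\lambda_0$ in the role of $\lambda_0$, gives for all $z\in i|k|\lambda_0+i\Sigma_{\gamma+\pi/2}$
\[
\big|\,z-i|k|\lambda_0\,\big|\cdot\big|D_k(z)-1\big|
\le \frac{1}{\sin(\beta-\gamma)}\sup_{z\in\Sigma_\beta}|e^{-|k|\lambda_0 z}f_k(z)|
\le \frac{C_{\lambda,\beta}}{e\sin(\beta-\gamma)}\cdot\frac{1}{|k|},
\]
which is exactly the announced bound with $C:=C_{\lambda,\beta}/(e\sin(\beta-\gamma))$, a constant depending only on $\gamma$ and $\lambda_0$.

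The main obstacle is the uniform-in-$k$ bookkeeping: tracking how the pairing with $k$ in the velocity integral turns into the dilation $t\mapsto|k|t$ inside the Fourier transform, thereby producing both the gain $1/|k|$ and the $k$-dependent strip $i|k|\lambda_0+i\Sigma_{\gamma+\pi/2}$, together with the verification that condition \eqref{def_Erd} does deliver the exponential control needed to invoke Theorem~\ref{theo_anal_rep} on the whole sector $\Sigma_\beta$, not merely on the imaginary axis. Once $D_k-1$ has been identified as a Laplace transform of an entire, exponentially controlled function, the analyticity and gluing argument for entireness is routine.
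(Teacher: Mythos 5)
Your proof is correct and follows essentially the same route as the paper's: express $D_k-1$ as a Laplace transform via Lemma \ref{Hilb_to_Four} and then invoke Theorem \ref{theo_anal_rep} together with its corollary. The only technical difference is that you feed $f_k(t)=t\,\Four[f^{eq}](kt)$ directly into the theorem with $|k|\lambda_0$ in the role of its parameter and extract the $1/|k|$ gain from $r\,e^{-|k|r}\le 1/(e|k|)$, whereas the paper first rescales $t'=|k|t$ so that the relevant function depends on $k$ only through the unit direction $e_k$, applies the theorem with $\lambda_0$, and then substitutes $z/|k|$; the resulting estimates coincide.
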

\begin{proof} Since $f\in \mathscr{S}(\mathbb{R}^d)$, we have $k\cdot\nabla_v f^{eq} \in L^1(\mathbb{R}^d)$. Consequently, $D_k$ is well defined as an analytic function on $\{ z \in \mathbb{C} \ | \ \Im z>0 \}$. Furthermore, we can apply Lemma \ref{Hilb_to_Four} to get for $\Im z>0$
\[  D_k(z) =1- \frac{i}{|k|^2} \Lap  \left[  \Four\left[k\cdot\nabla_v f^{eq}\right](kt)  \right] (z).\]
Then we define $e_k = \frac{k}{|k|}$  and we get by the change of variable $t'=|k|t$
\begin{equation*}
\begin{split}
 &\Lap  \left[  \Four\left[k\cdot \nabla_v f^{eq}\right](kt)  \right] (z) \\
 =& \int_0^\infty \Four\left[|k|e_k\cdot\nabla_v f^{eq}\right](|k|e_kt)e^{i\frac{z}{|k|}|k|t}\textrm{d}t \\
 =&\int_0^\infty \Four\left[e_k\cdot\nabla_v f^{eq}\right](e_kt')e^{i\frac{z}{|k|}t'}\textrm{d}t',
 \end{split}
\end{equation*}
so that
\begin{equation}
\label{Dk_after_variable_change}
 D_k(z) =1- \frac{i}{|k|^2} \Lap  \left[  \Four \left[e_k \cdot\nabla_v f^{eq}\right]( e_k t) \right] \left(\frac{z}{|k|}\right).
\end{equation}
Now, using Theorem \ref{theo_anal_rep}, we are going to prove this Laplace transform defines an entire function and we are going  to control it. 

\medskip

Since $f^{eq}\in \mathscr{E}(\mathbb{R}^d)$, it extends to an analytic function and there exists $\alpha \in (0,\frac{\pi}2)$ such that for all $\beta \in (0,\alpha)$ and for all $\lambda\in \mathbb{R}$, there exist $C>0$ such that
\[  \forall z\in \Sigma_{\beta} \mathbb{R}^d, \ |\Four [f^{eq}](z)|\leq C |e^{-\lambda z}|.\]
Consequently, we get
\[ \forall z\in \Sigma_{\beta}, \ |\Four \left[e_k\cdot\nabla_v f^{eq}\right](e_k z)| =  |iz \Four [f^{eq}](z)| \leq C  |z| e^{-\lambda \Re z}. \]

Finally, we have proven that for all $\lambda_0 \in \mathbb{R}$, there exists a constant $M>0$ (independent of $e_k$) such that
\[ \forall z\in \Sigma_{\beta}, \  |\Four \left[e_k\cdot\nabla_v f^{eq}\right](e_k z)| \leq M |e^{-\lambda_0 z}|.\]

\medskip

Applying Theorem \ref{theo_anal_rep} and its corollary, we have proven that $$\Lap [\Four \left[e_k\cdot\nabla_v f^{eq}\right](e_k t)]$$ is an entire function and that for all $\gamma \in (0,\alpha)$ and all $\lambda_0\in \mathbb{R}$, there exists $M>0$ (associated to $\beta = \frac{\alpha+\gamma}2$) such that
\[  \forall z \in i\lambda_0 + i \Sigma_{\gamma+\frac{\pi}2} , \ \left|\Lap \left[ \Four \left[e_k\cdot\nabla_v f^{eq}\right](e_k t)\right] (z)\right| \leq \frac{M}{\sin(\frac{\alpha-\gamma}2)} \frac1{|z-i\lambda_0|}. \]
Finally, we deduce directly the result from formula \eqref{Dk_after_variable_change}:
\begin{multline*}
 \left|D_k(z) -1\right|= \left|\frac{i}{|k|^2} \Lap  \left[ \Four \left[e_k \cdot \nabla_v f^{eq}\right]( e_k t) \right] \left(\frac{z}{|k|}\right)\right| \\
 \leq  \frac{M}{|k|^2\sin(\frac{\alpha-\gamma}2)} \frac1{|\frac{z}{|k|}-i\lambda_0|}  =  \frac{M}{|k|\sin(\frac{\alpha-\gamma}2)} \frac1{|z-i|k|\lambda_0|}.
 \end{multline*}

\end{proof}

\begin{corollary} \label{zero_loc_Dk} If $f^{eq}\in \mathscr{E}(\mathbb{R}^d)$ then there exists $\alpha\in(0,\frac{\pi}2)$ such that for all $\lambda_0\in \mathbb{R}$ and $\gamma\in (0,\alpha)$, there exists $c>0$ such that for all $k\in \mathbb{R}^d \setminus\{ 0\}$ we have
\[  \{ z \in \mathbb{C} \ | \ D_k(z) = 0 \} \subset  i|k|\lambda_0 - \left(\mathbb{D}(0,\frac{c}{|k|}) \cup \overline{i\Sigma_{\frac{\pi}2-\gamma}}\right) .  \]
\end{corollary}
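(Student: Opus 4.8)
The plan is to derive the corollary directly from the quantitative estimate on $D_k$ in Proposition \ref{Dk_cool}. That proposition produces $\alpha\in(0,\frac\pi2)$ such that for every $\gamma\in(0,\alpha)$ and every $\lambda_0\in\mathbb{R}$ there is $C>0$ with
\[ |D_k(z)-1|\leq \frac{C}{|k|\,|z-i|k|\lambda_0|}\qquad\text{for all }z\in i|k|\lambda_0+i\Sigma_{\gamma+\frac\pi2}. \]
The geometric idea is simply that $D_k$ cannot vanish where the right-hand side is $<1$, i.e. where $|z-i|k|\lambda_0|>C/|k|$. First I would set $c=2C$ (any constant $>C$ works), so that on the region $\{z\in i|k|\lambda_0+i\Sigma_{\gamma+\frac\pi2}:|z-i|k|\lambda_0|>c/|k|\}$ we have $|D_k(z)-1|<\tfrac12$, hence $D_k(z)\neq 0$ there. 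Thus every zero of $D_k$ lying in $i|k|\lambda_0+i\Sigma_{\gamma+\frac\pi2}$ must lie in the closed disc $i|k|\lambda_0+\overline{\mathbb D(0,c/|k|)}$.

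The remaining point is to handle zeros of $D_k$ that are \emph{not} in the sector $i|k|\lambda_0+i\Sigma_{\gamma+\frac\pi2}$. Observe that the complement of $i|k|\lambda_0+i\Sigma_{\gamma+\frac\pi2}$ in $\mathbb{C}$ is exactly $i|k|\lambda_0-\overline{i\Sigma_{\frac\pi2-\gamma}}$: indeed $i\Sigma_{\gamma+\frac\pi2}$ is the open cone of half-angle $\gamma+\frac\pi2$ about the positive imaginary axis, whose complement is the closed cone of half-angle $\frac\pi2-\gamma$ about the negative imaginary axis, namely $-\overline{i\Sigma_{\frac\pi2-\gamma}}$. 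Translating by $i|k|\lambda_0$ gives the claim. (One should double-check the boundary/angle bookkeeping here — that the open/closed conventions line up so that the two pieces genuinely cover $\mathbb{C}$ — this is the only place where a sign or an inclusion could go wrong; it is routine but worth stating carefully.) Combining the two cases: any zero $z$ of $D_k$ either lies in $i|k|\lambda_0-\overline{i\Sigma_{\frac\pi2-\gamma}}$, or lies in the sector and therefore in $i|k|\lambda_0+\overline{\mathbb D(0,c/|k|)}=i|k|\lambda_0-\overline{\mathbb D(0,c/|k|)}$ (the disc is centered at the origin, so negating is harmless). Hence
\[ \{z\in\mathbb{C}\ |\ D_k(z)=0\}\ \subset\ i|k|\lambda_0-\Bigl(\mathbb{D}(0,\tfrac{c}{|k|})\cup\overline{i\Sigma_{\frac\pi2-\gamma}}\Bigr), \]
as desired, with $\alpha$ the same constant furnished by Proposition \ref{Dk_cool} and $c$ depending only on $\lambda_0$ and $\gamma$ (not on $k$), since $C$ in the proposition has that property.

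I do not expect any real obstacle: the statement is essentially a repackaging of the decay estimate for $D_k-1$ together with the elementary fact that $\mathbb{C}$ is the union of the shifted sector $i|k|\lambda_0+i\Sigma_{\gamma+\frac\pi2}$ and its complementary closed cone. The only subtlety is the elementary geometry of the sectors $\Sigma_\theta$ — making sure the half-angles $\gamma+\frac\pi2$ and $\frac\pi2-\gamma$ are complementary in the right sense and that the boundary rays are covered (e.g. the imaginary axis rays), so that no zero escapes both regions. Once that is pinned down, choosing $c$ slightly larger than the constant $C$ from Proposition \ref{Dk_cool} finishes the argument in a couple of lines.
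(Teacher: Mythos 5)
Your proposal is correct and follows essentially the same route as the paper: use the decay estimate for $D_k-1$ from Proposition~\ref{Dk_cool} to trap any zero lying in the sector $i|k|\lambda_0+i\Sigma_{\gamma+\frac{\pi}{2}}$ inside a small translated disc, and observe that the complement of that sector is exactly the closed conjugate cone $i|k|\lambda_0-\overline{i\Sigma_{\frac{\pi}{2}-\gamma}}$. The one step you flag as "worth stating carefully" --- the identity between the complement of the sector and the conjugate cone --- is precisely what the paper pins down with an explicit polar parametrization $z=i|k|\lambda_0+ire^{i\delta}$, $\delta\in[\frac{\pi}{2}+\gamma,\,2\pi-\frac{\pi}{2}-\gamma]$, rewritten as $z=i|k|\lambda_0-ire^{-i\tilde\delta}$ with $\tilde\delta=\pi-\delta\in[-\frac{\pi}{2}+\gamma,\frac{\pi}{2}-\gamma]$; your choice $c=2C$ also cleanly absorbs the closed-versus-open disc issue that the paper glosses over.
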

\begin{proof}
Indeed, we have either $z\in  i|k|\lambda_0+i\Sigma_{\gamma +\frac{\pi}2}$, so that $1= |D_k(z)-1|\leq \frac{C}{|k| |z-i|k|\lambda_0|}$ and thus $z\in i|k|\lambda_0 - \mathbb{D}(0,\frac{C}{|k|})$. Otherwise, 
$z\in \mathbb{C}\setminus\left\{ i|k|\lambda_0+i\Sigma_{\gamma +\frac{\pi}2}\right\}$, that is $z= i|k|\lambda_0+ire^{i\delta}$,\ 
$\delta \in[\frac{\pi}{2}+\gamma,2\pi-\frac{\pi}{2}-\gamma]$, and thus $\pi-\delta\in [-\frac{\pi}{2}+\gamma,-\gamma+\frac{\pi}{2}]$, which leads to $z= i|k|\lambda_0-ire^{-i\tilde{\delta}}$, with
$\tilde{\delta}=\pi-\delta\in [-\frac{\pi}{2}+\gamma,-\gamma+\frac{\pi}{2}]$.
\end{proof}

\begin{corollary} \label{cor_Dk_cool}
If $f^{eq} \in \mathscr{E}(\mathbb{R}^d)$ then for all $k\in  \widehat{{\mathbb{T}_d}}\setminus \{0\}$, $D_k$ is an entire function and for all $\lambda\in \mathbb{R}$, $\{\omega \in \mathbb{C} \ | \ D_k(\omega)=0 \textrm{ and } \Im \omega \geq \lambda \}$ is a finite set.
 \end{corollary}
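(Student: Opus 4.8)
The plan is to deduce this corollary almost immediately from Proposition~\ref{Dk_cool} and Corollary~\ref{zero_loc_Dk}. Entirety of $D_k$ for $k\in\widehat{{\mathbb{T}_d}}\setminus\{0\}$ is exactly the first bullet of Proposition~\ref{Dk_cool} (applied with such a $k\in\mathbb{R}^d\setminus\{0\}$), so only the finiteness statement requires an argument.

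First I would record that $D_k$ is not identically zero: the second bullet of Proposition~\ref{Dk_cool} gives, for any fixed $\lambda_0\in\mathbb{R}$ and $\gamma\in(0,\alpha)$, the bound $|D_k(z)-1|\le C/(|k|\,|z-i|k|\lambda_0|)$ on the sector $i|k|\lambda_0+i\Sigma_{\gamma+\frac{\pi}{2}}$, so $D_k(z)\to 1$ as $|z|\to\infty$ along, say, the ray $z=i|k|\lambda_0+it$, $t\to+\infty$. In particular $D_k\not\equiv 0$, hence its zero set is discrete with no accumulation point in $\mathbb{C}$; equivalently, $D_k$ has finitely many zeros in every bounded subset of $\mathbb{C}$.

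Then, fixing $\lambda\in\mathbb{R}$, I would apply Corollary~\ref{zero_loc_Dk} with a choice of $\lambda_0$ small enough that $|k|\lambda_0<\lambda$ (possible since $k$ is fixed), which yields $\gamma\in(0,\alpha)$ and $c>0$ with
\[ \{z\in\mathbb{C}\mid D_k(z)=0\}\subset i|k|\lambda_0-\Big(\mathbb{D}(0,\tfrac{c}{|k|})\cup\overline{i\Sigma_{\frac{\pi}{2}-\gamma}}\Big). \]
The elementary observation is that the closed cone $-\overline{i\Sigma_{\frac{\pi}{2}-\gamma}}$ consists of the points $re^{i\delta}$ with $\delta\in[\pi+\gamma,2\pi-\gamma]$, all of which have non-positive imaginary part; hence $i|k|\lambda_0-\overline{i\Sigma_{\frac{\pi}{2}-\gamma}}\subset\{\Im z\le|k|\lambda_0\}$, and by the choice $|k|\lambda_0<\lambda$ this cone is disjoint from $\{\Im z\ge\lambda\}$. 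Therefore
\[ \{\omega\in\mathbb{C}\mid D_k(\omega)=0\text{ and }\Im\omega\ge\lambda\}\subset i|k|\lambda_0-\mathbb{D}(0,\tfrac{c}{|k|}), \]
a bounded set, on which the entire, not identically zero function $D_k$ has only finitely many zeros. This proves the claim.

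I do not anticipate a genuine obstacle: Proposition~\ref{Dk_cool} and Corollary~\ref{zero_loc_Dk} do all the analytic work, and what remains is to translate ``zeros lie in a small disk together with a downward cone'' into ``finitely many zeros above any horizontal line''. The only point requiring a little care is choosing $\lambda_0$ with $|k|\lambda_0<\lambda$, so that the apex of the cone (hence the whole cone) stays below the line $\Im z=\lambda$.
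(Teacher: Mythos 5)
Your proof is correct and follows essentially the same route as the paper: entirety from Proposition~\ref{Dk_cool}, boundedness of the zero set above $\Im z = \lambda$ from Corollary~\ref{zero_loc_Dk}, then finiteness because zeros of a nonzero entire function are isolated. Your version is merely more explicit about two points the paper leaves implicit, namely that $D_k\not\equiv 0$ (since $D_k(z)\to 1$ along a vertical ray) and that one should pick $\lambda_0<\lambda/|k|$ so that the downward cone $i|k|\lambda_0-\overline{i\Sigma_{\pi/2-\gamma}}$ lies entirely below the line $\Im z=\lambda$.
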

\begin{proof}
In Proposition \ref{Dk_cool}, we have proved that $D_k$ is an entire function and it can be directly deduced from its Corollary \ref{zero_loc_Dk} that $\{ z \in \mathbb{C} \ | \ D_k(z) = 0 \textrm{ and } \Im \omega \geq \lambda \}$ is bounded. Consequently, since zero points of $D_k$ are isolated, it is a finite set.
\end{proof}

It is very natural to adapt this result to $N_k$. More precisely, we deduce the following proposition.
\begin{proposition}
\label{Nk_nice} For all $k \in \widehat{{\mathbb{T}_d}}\setminus\{0\}$, $N_k$ is an entire function and there exists $\alpha \in  (0,\frac{\pi}2)$ such that for all $\lambda_0\in \mathbb{R}$ and for all  $\beta \in (0,\alpha)$, we have
\[ \sup_{z\in i \lambda_0 + i\Sigma_{\beta+\frac{\pi}2}} |N_k(z)| |z-i\lambda_0| < \infty. \]
\end{proposition}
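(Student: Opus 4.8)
\textbf{Proof proposal for Proposition \ref{Nk_nice}.}

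The plan is to mirror exactly the argument used for $D_k$ in Proposition \ref{Dk_cool}, replacing $k\cdot\nabla_v f^{eq}$ by $\widehat{g_0}(k,\cdot)$ and the prefactor $-\tfrac{i}{|k|^2}$ appropriately. First I would recall that, by Assumption \ref{assump}, for each $k\in K$ we have $v\mapsto \widehat{g_0}(k,v)\in \mathscr{E}(\mathbb{R}^d)\subset L^1(\mathbb{R}^d)$, so that $N_k$ is well defined and analytic on $\{\Im z>0\}$. Applying Lemma \ref{Hilb_to_Four} with $f = \widehat{g_0}(k,\cdot)$ gives, for $\Im z>0$,
\[ N_k(z) = -\frac{1}{|k|^2}\,\Lap\bigl[\,\Four[\widehat{g_0}(k,\cdot)](kt)\,\bigr](z), \]
and then, writing $e_k = k/|k|$ and performing the change of variable $t' = |k|t$ as in \eqref{Dk_after_variable_change}, one obtains
\[ N_k(z) = -\frac{1}{|k|^2}\,\Lap\bigl[\,\Four[\widehat{g_0}(k,\cdot)](e_k t)\,\bigr]\!\left(\frac{z}{|k|}\right). \]

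Next I would invoke the defining property \eqref{def_Erd} of $\mathscr{E}(\mathbb{R}^d)$: since $\widehat{g_0}(k,\cdot)\in\mathscr{E}(\mathbb{R}^d)$, its Fourier transform extends to an entire function on $\mathbb{C}^d$ and there exists $\alpha\in(0,\tfrac{\pi}{2})$ such that for every $\beta\in(0,\alpha)$ and every $\lambda\in\mathbb{R}$ there is $C>0$ with $|\Four[\widehat{g_0}(k,\cdot)](z)|\leq C|e^{-\lambda z}|$ for all $z\in\Sigma_\beta\mathbb{R}^d$ (restricting the scalar argument $t$ in $e_k t$ to the sector $\Sigma_\beta$). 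In particular, for any prescribed $\lambda_0\in\mathbb{R}$ there is $M>0$ with $|\Four[\widehat{g_0}(k,\cdot)](e_k t)|\leq M|e^{-\lambda_0 t}|$ on $\Sigma_\beta$. Here, unlike in Proposition \ref{Dk_cool}, $K$ is a fixed finite set, so one need not track uniformity in $k$ — this simplifies matters, and one may even let $\alpha$ depend on $k$ and then take the minimum over $k\in K$. Applying the analytic representation Theorem \ref{theo_anal_rep} together with its Corollary then shows that $\Lap[\Four[\widehat{g_0}(k,\cdot)](e_k t)]$ extends to an entire function and that for all $\gamma\in(0,\alpha)$,
\[ \sup_{z\in i\lambda_0 + i\Sigma_{\gamma+\frac{\pi}{2}}} |z-i\lambda_0|\,\bigl|\Lap[\Four[\widehat{g_0}(k,\cdot)](e_k t)](z)\bigr| <\infty. \]
Undoing the change of variable $z\mapsto z/|k|$ (which merely rescales $\lambda_0$ and the supremum constant) yields the claimed bound $\sup_{z\in i\lambda_0 + i\Sigma_{\beta+\frac{\pi}{2}}} |N_k(z)|\,|z-i\lambda_0|<\infty$; in particular $N_k$ is entire.

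I do not expect any serious obstacle here: the whole proof is a transcription of the $D_k$ argument, the only genuine input being that $\mathscr{E}(\mathbb{R}^d)$ is defined precisely so that Theorem \ref{theo_anal_rep} applies. The one point requiring a little care is bookkeeping of the $|k|$-rescaling in the Laplace variable and making sure the exponent $\lambda_0$ in the hypotheses is rescaled consistently (as was already handled at the end of the proof of Proposition \ref{Dk_cool}); since $K$ is finite, no uniformity in $k$ is needed, so even this is lighter than in the $D_k$ case.
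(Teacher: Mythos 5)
Your argument is correct and follows the paper's proof essentially verbatim: write $N_k$ via Lemma \ref{Hilb_to_Four} as a Laplace transform, use the decay of $\Four[\widehat{g_0}(k,\cdot)]$ from the definition of $\mathscr{E}(\mathbb{R}^d)$, then apply Theorem \ref{theo_anal_rep}. The $|k|$-rescaling you carry over from the $D_k$ proof is an unnecessary detour (and as written it drops a factor of $1/|k|$, which is harmless); the paper's proof of this proposition applies Theorem \ref{theo_anal_rep} directly to $\Lap[\Four[\widehat{g_0}(k,\cdot)](kt)]$ precisely because, as you note, $K$ is finite and no uniformity in $k$ is required here.
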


\begin{proof} Since $v\mapsto \widehat{g_0}(k,v)\in \mathscr{S}(\mathbb{R}^d)$, $N_k$ defines naturally an analytic function for $\Im z>0$. Furthermore, applying Lemma \ref{Hilb_to_Four} we know that for $\Im z>0$
\[  N_k(z) = \frac{1}{|k|^2} \Lap  \left[  \Four\left[ \widehat{g_0}(k,v) \right](kt)  \right] (z).\]
Since $v\mapsto \widehat{g_0}(k,v) \in \mathscr{E}(\mathbb{R}^d) $, $t\mapsto \Four\left[ \widehat{g_0}(k,v) \right](kt)$ is an entire function and there exists $\alpha\in (0,\frac{\pi}2)$ such that for all $\beta \in (0,\alpha)$ and all $\lambda_0 \in \mathbb{R}$, $t\mapsto |e^{\lambda_0 t}\Four\left[ \widehat{g_0}(k,v) \right](kt)|$ is bounded on $\Sigma_\beta$.
Consequently, we deduce of Theorem \ref{theo_anal_rep}, that for all $\lambda_0 \in \mathbb{R}$, $N_k$
 has a holomorphic extension on $i\lambda_0 + i\Sigma_{\alpha+\frac{\pi}2}$ such that
\[ \forall \beta\in(0,\alpha), \ \sup_{z \in i\lambda_0 + i\Sigma_{\gamma+\frac{\pi}2}} |(z-i\lambda_0)N_k(z)|<\infty. \]
Finally, since $N_k$ is analytic on $i\lambda_0 + i\Sigma_{\alpha+\frac{\pi}2}$ for all $\lambda_0\in \mathbb{R}$, it is an entire function.
\end{proof}

\subsection{A theoretical tool for the control of $\mathcal{L}^{-1}[N_k/D_k]$}
Now, we introduce a general criterion to invert Laplace transform and get an asymptotic expansion.
\begin{lemma} 
\label{lemma_key}
Let $R \in \mathcal{H}(\mathbb{C})$ be an entire function and $N$ be a meromorphic function defined on $\mathbb{C}$. If there exists $\alpha \in (0,\frac{\pi}2)$ such that
\[\exists C>0, \ \sup_{z \in i\Sigma_{\alpha + \frac{\pi}2} } |zR(z)| + |zN(z)| <C,  \]
then for any $\lambda\in \mathbb{R}$, there exist $\beta \in (0,\alpha)$ and a function $r  \in \mathcal{H}(\Sigma_{\beta})$ analytic and bounded on $\Sigma_{\beta}$ such that if $\Im z$ is large enough then
\[ \frac{N(z)}{1-R(z)} = \Lap \left[   \sum_{\substack{ \omega\in Z\\
													\Im \omega \geq \lambda	}} P_{\omega}(t) e^{-i \omega t} + e^{\lambda t} r(t) \right](z)      ,\]
where $Z$ is the set of poles of $\frac{N(z)}{1-R(z)}$,
$$
P_{\omega} = \sum_{k=0}^{n_\omega-1}\frac{a_{k+1,\omega} (-i)^{k+1}}{k!} X^{k}
$$
 is the polynomial whose coefficients are defined by the expansion of $\frac{N}{1-R}$ in $z=\omega$
\[ \frac{N(z)}{1-R(z)} \mathop{=}\limits_{z\to \omega}  \sum_{j=1}^{n_\omega} \frac{a_{j,\omega}}{(z-\omega)^j} + \mathcal{O}(1)  .\]
\end{lemma}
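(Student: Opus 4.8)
The plan is to exploit the explicit transform \eqref{intro_art_pas_fou}. Since $\Lap[t^m e^{-i\omega t}](z)=i^{m+1}m!/(z-\omega)^{m+1}$, a direct computation shows that the polynomial $P_\omega$ in the statement is exactly the one for which $\Lap[P_\omega(t)e^{-i\omega t}](z)$ equals the principal part $\sum_{j=1}^{n_\omega}a_{j,\omega}(z-\omega)^{-j}$ of $\frac{N}{1-R}$ at $\omega$. So I would (1) show that $\frac{N}{1-R}$ has only finitely many poles with $\Im\omega\ge\lambda$, (2) subtract off their principal parts, (3) recognize the remainder as $\Lap[e^{\lambda t}r(t)]$ for some analytic bounded $r$ via Theorem \ref{theo_anal_rep}, and (4) reassemble using linearity of $\Lap$.

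For step (1): from $|zR(z)|\le C$ on $i\Sigma_{\alpha+\frac{\pi}{2}}$ one gets $R(z)\to0$ there, hence $1-R\not\equiv0$ (so $\frac{N}{1-R}$ is genuinely meromorphic on $\mathbb{C}$) and $|1-R(z)|\ge\frac12$ on $\{z\in i\Sigma_{\alpha+\frac{\pi}{2}}:|z|\ge2C\}$; from $|zN(z)|\le C$, the function $N$ is holomorphic on the open sector $i\Sigma_{\alpha+\frac{\pi}{2}}$ with $|N(z)|\le C/|z|$ there. Since $\alpha+\frac{\pi}{2}>\frac{\pi}{2}$, the sector $i\Sigma_{\alpha+\frac{\pi}{2}}$ contains $\{\Im z\ge0\}\setminus\{0\}$ and its complement is a bounded-below closed sector about the negative imaginary axis; hence $\{\Im z\ge\lambda\}$ is the union of a bounded set and $\{z\in i\Sigma_{\alpha+\frac{\pi}{2}}:\Im z\ge\lambda\}$, and on the latter the poles of $\frac{N}{1-R}$ all lie in $\{|z|<2C\}$. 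So $\frac{N}{1-R}$ has finitely many poles $\omega_1,\dots,\omega_M$ with $\Im\omega\ge\lambda$; running the same argument on the strip $\{\lambda-1\le\Im z<\lambda\}$ also produces $\Lambda<\lambda$ with no pole of $\frac{N}{1-R}$ in $(\Lambda,\lambda)$.

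For steps (2)–(3): put $Q=\frac{N}{1-R}-\sum_{j=1}^M\Pi_j$, with $\Pi_j$ the principal part at $\omega_j$. Then $Q$ is holomorphic on $\{\Im z>\Lambda\}$ and $|Q(z)|=O(1/|z|)$ on $i\Sigma_{\alpha+\frac{\pi}{2}}$. Fixing $\lambda^*\in(\Lambda,\lambda)$, the crucial step is to choose $\alpha'\in(0,\alpha)$ small enough that $Q$ extends holomorphically to the shifted sector $i\lambda^*+i\Sigma_{\alpha'+\frac{\pi}{2}}$ with $\sup_{i\lambda^*+i\Sigma_{\gamma+\frac{\pi}{2}}}|(z-i\lambda^*)Q(z)|<\infty$ for every $\gamma<\alpha'$: for $\alpha'$ small this sector is a thin neighborhood of $\{\Im z\ge\lambda^*\}$ near its vertex (so it stays inside $\{\Im z>\Lambda\}$ there) and is contained in $i\Sigma_{\alpha+\frac{\pi}{2}}$ at distance $\ge\rho_0\ge2C$ from the vertex far out (so $Q$ is holomorphic with the $O(1/|z|)$ bound there), the decay on the remaining bounded sub-sector being automatic from holomorphy. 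Theorem \ref{theo_anal_rep} applied with $\lambda_0=\lambda^*$ then yields a holomorphic $f_0:\Sigma_{\alpha'}\to\mathbb{C}$ with $\sup_{\Sigma_\beta}|e^{-\lambda^*z}f_0(z)|<\infty$ for $\beta<\alpha'$ and $Q(i\mu)=\Lap[f_0](i\mu)$ for $\mu>\lambda^*$.

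For step (4): set $r(t)=e^{-\lambda t}f_0(t)$. Since $\lambda^*<\lambda$ and $\Re t>0$ on $\Sigma_\beta$, $r$ is analytic and bounded on $\Sigma_\beta$ for any fixed $\beta\in(0,\alpha')$, and $\Lap[e^{\lambda t}r(t)]=\Lap[f_0]=Q$ for $\Im z$ large; adding back $\sum_j\Pi_j=\sum_j\Lap[P_{\omega_j}(t)e^{-i\omega_j t}]$ and using linearity of the Laplace transform yields the claimed identity for $\Im z$ large, with $Z$ the set of poles of $\frac{N}{1-R}$ and the sum running over $\Im\omega\ge\lambda$. I expect the genuine difficulty to be the sector bookkeeping in step (3): the ``wings'' of the shifted sector $i\lambda^*+i\Sigma_{\alpha'+\frac{\pi}{2}}$ unavoidably dip below the line $\Im z=\lambda^*$, so one has to balance the opening angle $\alpha'$ against both the location of the poles of $\frac{N}{1-R}$ lying below $\lambda$ and the region where the hypothesis $\sup|zN|+|zR|<C$ is available; everything else is routine once Theorem \ref{theo_anal_rep} is invoked.
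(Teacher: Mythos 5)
Your proposal is correct and follows essentially the same route as the paper: finiteness of poles with $\Im\omega\geq\lambda$ from the sector bounds, subtraction of the principal parts, a $\frac{C}{|z-i\lambda_0|}$ estimate on a shifted sector, and inversion via Theorem~\ref{theo_anal_rep}; the paper applies that theorem directly at $\lambda_0=\lambda$ (after reducing to $\lambda<-2C$ so all the small-disk poles are already swept up in $Q$), whereas you shift to an auxiliary level $\lambda^*\in(\Lambda,\lambda)$ and recover boundedness of $r(t)=e^{-\lambda t}f_0(t)$ from $\lambda^*<\lambda$. That small shift is a clean way to sidestep the closure issue the paper handles by choosing $\beta$ small, and the sector bookkeeping you flag as delicate is indeed the same bookkeeping the paper carries out.
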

\begin{remark}
In the application, for the proof of Proposition \ref{resol_rel_disp_lin}, $N$ will be entire (thus meromorphic), but for the second order case, in the next section, we will really need that $N$ is meromorphic.
\end{remark}
\begin{proof}[Proof of Lemma \ref{lemma_key}]  Many geometrical objects are going to be introduced in this proof. The reader can refer to Figure \ref{fig:serrure} to an illustration of these constructions.\\
First observe that to prove the lemma, we can assume that $\lambda$ is negative enough. In particular we assume that $\lambda<-2C$.\\
By construction, if $|z|\ge2C$ and $z \in \overline{i\Sigma_{\alpha + \frac{\pi}2}}$ then $|1-R(z)|\ge1-|R(z)|>1-\frac{C}{|z|}\ge \frac12$. Consequently, all zero points of $(1-R)$  belong to $\mathbb{D}(0,2C)\cup -i\Sigma_{\frac{\pi}2-\alpha}$ (note that $-i\Sigma_{\frac{\pi}2-\alpha}{\setminus\{ 0\}}=\left({\overline{i\Sigma_{\alpha + \frac{\pi}2}}}\right)^c$). Since the poles of $N$ lie on $-i\Sigma_{\frac{\pi}2-\alpha}$, 
the poles of $\frac{N(z)}{1-R(z)}$ lie on $\mathbb{D}(0,2C)\cup -i\Sigma_{\frac{\pi}2-\alpha}$. In particular, the set of its poles with an imaginary part larger than or equal to $\lambda$ is finite (see Figure \ref{fig:serrure}).\\
Now consider the following rational fraction 
\[ Q(z) = \sum_{\substack{ \omega\in Z \\ 
													\Im \omega \geq \lambda	}}  \sum_{j=1}^{n_{\omega}} \frac{a_{j,\omega}}{(z-\omega)^j} .\]
We introduce $r_1>0$ such that we have
\[ \mathbb{D}(0,2C) \cup  \left(\{ z\in \mathbb{C} \ | \ \Im z \geq \lambda \}\cap -i\Sigma_{\frac{\pi}2-\alpha} \right)\subset \mathbb{D}(i\lambda,r_1).\]
	Now, we observe that there exists $\beta \in (0,\alpha)$ such that $\frac{N}{1-R}-Q$ is a continuous function on $\overline{i\lambda + i\Sigma_{\beta +\frac{\pi}2}}$. Indeed, it is a meromorphic function whose poles lie on $\{ z\in \mathbb{C} \ | \ \Im z < \lambda \} \cap -i\Sigma_{\frac{\pi}2-\alpha}$ and are isolated, and thus we can choose such $\beta$ (small enough). Consequently, there exists $M>0$ such that
	\[ \forall z \in  \mathbb{D}(i\lambda,r_1) \cap \left( i\lambda+i\Sigma_{\beta+\frac{\pi}2} \right), \ \left|  \frac{N(z)}{1-R(z)} - Q(z) \right|<M\leq \frac{Mr_1}{|z-i\lambda|}.\]
Furthermore since $zN(z)$ is bounded on $ i\Sigma_{\alpha + \frac{\pi}2} $, there exists $M_1>0$ such that
\[ \forall z \in i\lambda+i\Sigma_{\beta+\frac{\pi}2}, \ |N(z)| \leq \frac{M_1}{|z-i\lambda|}. \]
Indeed, we distinguish the case $z\in i\Sigma_{\alpha+\frac{\pi}2}\cap  \mathbb{D}^c(0,2|\lambda|)$, for which there exists $C>0$ such that
$$
|N(z)|\le \frac{C}{|z|}\le  \frac{C}{|z-i\lambda|}\frac{|z-i\lambda|}{|z|}\le  \frac{C}{|z-i\lambda|}\frac{3|\lambda|}{2|\lambda|}\le  \frac{M_1}{|z-i\lambda|},
$$
and the case $z\in \left(i\lambda+i\Sigma_{\beta+\frac{\pi}2}\right) \cap \left( ( i\Sigma_{\alpha+\frac{\pi}2})^c\cup \mathbb{D}(0,2|\lambda|) \right)$ which is a bounded set ensuring
$$
|z-i\lambda||N(z)|\le M_1.
$$
Consequently, by construction of $r_1$, we get $|1-R(z)|\ge \frac{1}2$, when $|z|\ge 2C$ and $z\in i\Sigma_{\alpha + \frac{\pi}2}$ [so, in particular when $z\in i\Sigma_{\alpha + \frac{\pi}2}\cap  \mathbb{D}^c(i\lambda,r_1) \cap  \left(i\lambda+i\Sigma_{\beta+\frac{\pi}2} \right)$]
and $|1-R(z)|\ge c$, with $c>0$, when $z\in \left(i\Sigma_{\alpha + \frac{\pi}2}\right)^c \cap  \left(i\lambda+i\Sigma_{\beta+\frac{\pi}2} \right)$ (which is a bounded set)
 [so, in particular  when $z\in\left( i\Sigma_{\alpha + \frac{\pi}2}\right)^c\cap  \mathbb{D}^c(i\lambda,r_1) \cap  \left(i\lambda+i\Sigma_{\beta+\frac{\pi}2} \right)$]
 and thus
\[ \forall z \in \left(i\lambda+i\Sigma_{\beta+\frac{\pi}2} \right) \cap  \mathbb{D}^c(i\lambda,r_1), \  \left| \frac{N(z)}{1-R(z)} \right|  \leq \frac{\max(2,1/c)M_1}{|z-i\lambda|} .\]
Finally, since $Q$ is a rational fraction whose poles lie on $\mathbb{D}(i\lambda,r_1)$ and vanishing as $z$ goes to $\infty$, the function $z\rightarrow (z-i\lambda)Q(z)$ is bounded on  $\mathbb{D}^c(i\lambda,r_1)$
and thus there exists $M_2>0$ such that
\[ \forall z \in \left(i\lambda+i\Sigma_{\beta+\frac{\pi}2} \right) \cap  \mathbb{D}^c(i\lambda,r_1), \  \left| Q(z) \right|  \leq \frac{M_2}{|z-i\lambda|} .\]
Then we get a constant $M_3>0$ such that
	\[ \forall z \in    i\lambda+i\Sigma_{\beta+\frac{\pi}2}, \ \left|  \frac{N(z)}{1-R(z)} - Q(z) \right|<\frac{M_3}{|z-i\lambda|}.\]

Applying Theorem \ref{theo_anal_rep} to $\frac{N}{1-R}-Q$, we get an analytic and bounded function $t\rightarrow e^{-\lambda t}e^{\lambda t}r(t) = r(t)$ on $\Sigma_{\gamma}$ (with $\gamma=\frac{\beta}2)$, 
such that
 \[  \Lap \left[ e^{\lambda t}  r(t) \right](z)= \frac{N(z)}{1-R(z)}-Q(z).\]
To conclude the proof of Lemma \ref{lemma_key}, 
we just need to determine the invert Laplace transform of $Q$. But we get, by straightforward calculation,
 \[ Q(z) = \Lap \left[  \sum_{\substack{  \omega\in Z\\ 
													\Im \omega \geq \lambda	}}   P_{\omega}(t) e^{-i \omega t} \right](z).\]
\end{proof}

\begin{figure}
\includegraphics[width=0.8\linewidth]{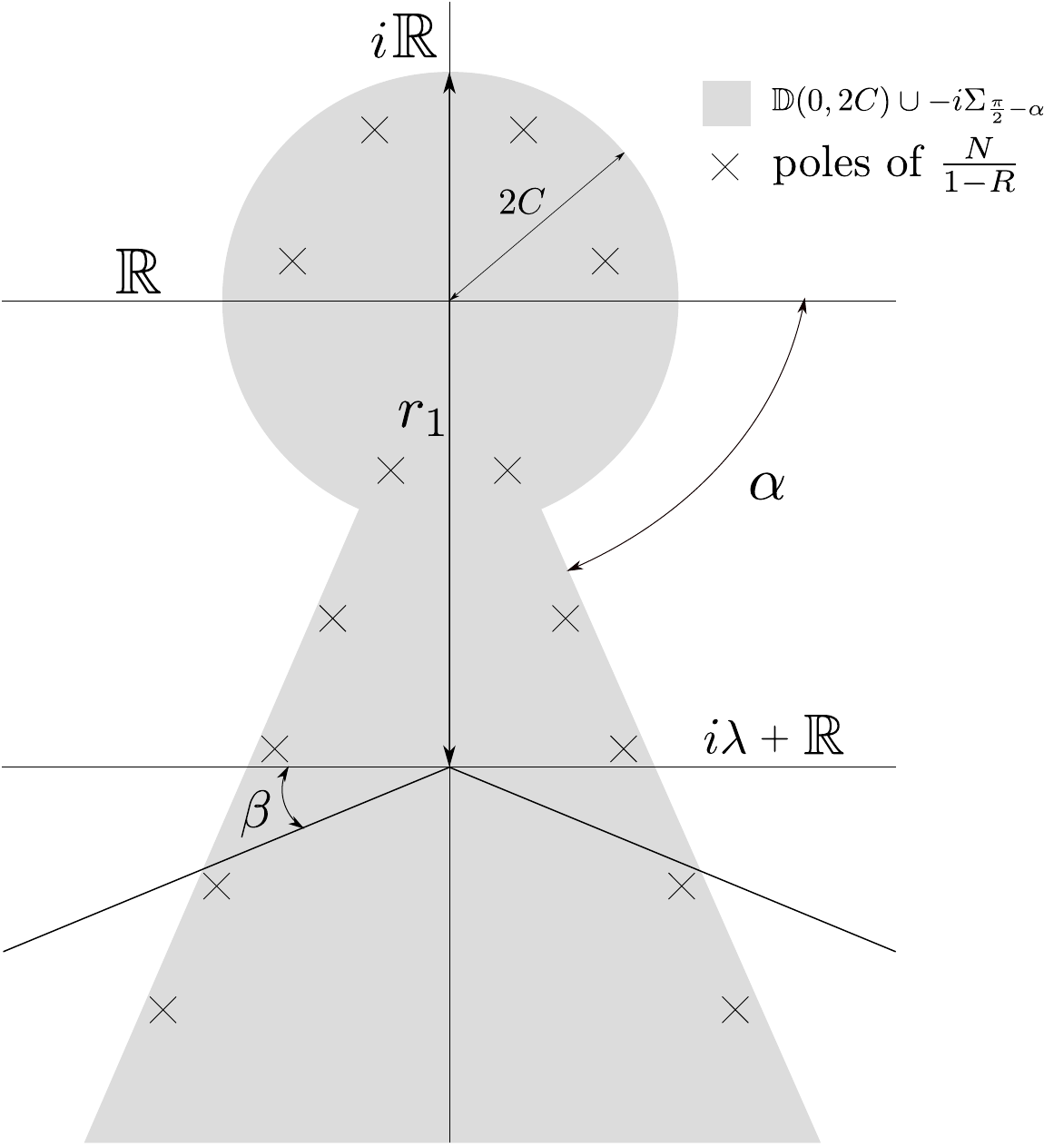}
\caption{An illustration of the geometrical constructions introduced in the proof of Lemma \ref{lemma_key}.}
\label{fig:serrure}
\end{figure}
\subsection{Proof of Proposition \ref{resol_rel_disp_lin}}
Finally we can prove the result stated at the beginning of this section.

\begin{proof}[Proof of Proposition \ref{resol_rel_disp_lin}] In Proposition \ref{Dk_cool} and \ref{Nk_nice} we have proven that we can apply Lemma \ref{lemma_key} with $D_k=1-R$ and $N=N_k$, taking $\lambda_0=0$. But the result of this lemma is exactly the expansion of  Proposition \ref{resol_rel_disp_lin}.
\end{proof}

\begin{remark}
As we use only $\lambda_0=0$ in Proposition \ref{Dk_cool} and \ref{Nk_nice} for the proof of Proposition \ref{resol_rel_disp_lin}, we may wonder of we could use a weaker assumption on $f^{eq}$ for getting the estimate on $D_k$
for example. Indeed, that estimate derives from Theorem \ref{theo_anal_rep} for $\lambda_0=0$ and so the weaker assumption could be the hypothesis of (i) in Theorem \ref{theo_anal_rep}  for $\lambda_0=0$. 
However, we also need to have that $D_k$ is entire, and there we have used Theorem \ref{theo_anal_rep} {\emph{for all}} $\lambda_0\in \mathbb{R}$.

\end{remark}

\section{Resolution and expansion of the second order equation}
\label{sec:4}
\subsection{Introduction and statement of the result} 
In Proposition \ref{propo_existence_o2}, we have proven that it is enough to solve dispersion relation \eqref{rel_disp_o2} to get a solution $(h,\mu)$ to second order linearized Vlasov-Poisson equation \eqref{VPL2}. 
So this section is devoted to the resolution of this second order dispersion relation, following the strategy established for the first order dispersion relation in the previous section,  
by proving the following proposition, which permits to complete the proof of our main result, Theorem \ref{The_th}.
\begin{proposition}
\label{resol_rel_disp_o2}  
Assume $f^{eq}\in \mathscr{E}(\mathbb{R}^d)$ and $g_0$ satisfies Assumption \ref{assump}. Consider the solution $(g,\psi)$ of \eqref{VPL} given by Proposition \ref{resol_rel_disp_lin} and Proposition \ref{propo_existence_lin}. Then there exists a solution $\mu$ of the dispersion relation \eqref{rel_disp_o2} whose expansion is detailed in Theorem \ref{The_th}.
\end{proposition}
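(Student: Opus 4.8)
The plan is to mimic the first-order argument of Section \ref{section_linear_resolution}, writing the right-hand side of \eqref{rel_disp_o2} as $\mathcal{N}_k(z) = -\frac{i}{|k|^2}\int_{\mathbb{R}^d} \frac{\Lap[\widehat{\nabla_x\psi\cdot\nabla_v g}(t,k,v)](z)}{v\cdot k - z}\,\textrm{d}v$ and applying Lemma \ref{lemma_key} with $1-R = D_k$ and $N = \mathcal{N}_k$. The main task is therefore to understand $\mathcal{N}_k$: to show it is meromorphic on $\mathbb{C}$, to locate its poles, to prove the decay estimate $\sup_{z\in i\lambda_0 + i\Sigma_{\beta+\pi/2}}|z - i\lambda_0|\,|\mathcal{N}_k(z)| < \infty$ needed in Lemma \ref{lemma_key} (on the complement of a neighbourhood of the poles), and to identify the polynomial parts $Q_{\omega,k}$ at each pole, which then feed into the asymptotic expansion of Theorem \ref{The_th}. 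First I would insert into \eqref{rel_disp_o2} the explicit form of $g$ from the Duhamel formula \eqref{g_Duhamel_1} together with the first-order expansion $\psi(t,x) = \sum_{k'\in K}\sum_{D_{k'}(\omega')=0,\,\Im\omega'\ge\lambda} P_{k',\omega'}(t)e^{i(k'\cdot x - \omega' t)} + e^{ik'\cdot x}e^{\lambda t}r_{k',\lambda}(t)$ from Proposition \ref{resol_rel_disp_lin}. This expresses $\widehat{\nabla_x\psi\cdot\nabla_v g}(t,k,v)$, after the $x$-Fourier transform, as a finite sum over pairs $k_1 + k_2 = k$ (with $k_1\in K$ the mode of $\psi$ and $k_2\in K$ the mode of $g$, plus the free-streaming contribution of $g$) of terms each involving a time convolution against $\Four[f^{eq}]$ evaluated along broken rays $\tau k_1 + s k_2$, exactly as in the representative term $F^{(rep)}_k$ of \eqref{intro_Frep} in the introduction.

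Next I would analyse each such elementary term by splitting into the \emph{non-resonant} case ($k_1,k_2$ not colinear, so $|\tau k_1 + sk_2|\gtrsim s$ on the simplex $0\le\tau\le s\le t$) and the \emph{resonant} case ($k_2 = -\gamma k_1$, $\gamma\in(0,1)$), precisely as sketched after \eqref{intro_Frep}. Using $f^{eq}\in\mathscr{E}(\mathbb{R}^d)$ — so that $\Four[f^{eq}]$ extends to an entire function decaying faster than any exponential along real directions, and Lemma \ref{Hilb_to_Four} and Theorem \ref{theo_anal_rep} are available — one shows that each term, after subtracting its "leading" exponential/polynomial behaviour, has a Laplace transform that extends holomorphically to a sector $i\lambda_0 + i\Sigma_{\beta+\pi/2}$ for every $\lambda_0$ with the required $|z-i\lambda_0|^{-1}$ decay; the leading behaviour contributes the frequencies $\omega_1 + \omega_2$ (type II, from the simplex corner) and $\frac{|k|}{|k_1|}\omega_1$ (type III, the Best frequency, from the resonant line segment), while the $r_{k',\lambda}$-remainder piece of $\psi$ contributes only to the analytic remainder $R$. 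Tracking how the polynomial prefactors $P_{k_1,\omega_1}$, $P_{k_2,\omega_2}$ interact with the time integrations gives the degree bounds $\ell < n_{k_1,\omega_1} + n_{k_2,\omega_2} - 1 + \sigma_{\omega_1,\omega_2}^{k_1,k_2}$ and $p < n_{k_1,\omega_1} + 1 + \nu_{\omega_1,\omega_2}^{k_1,k_2}$, with the correction integers $\sigma,\nu$ of Remark \ref{relou} appearing exactly when a type-II or type-III frequency accidentally coincides with a zero of $D_k$ or with each other (raising the pole order of $\mathcal{N}_k/D_k$ by the corresponding multiplicity).

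Once $\mathcal{N}_k$ is known to be meromorphic with poles confined (like those of $N_k$ in Proposition \ref{Nk_nice}, plus the finitely many new points $\omega_1+\omega_2$ and $\frac{|k|}{|k_1|}\omega_1$) to $-i\Sigma_{\pi/2-\alpha}$ together with a bounded set, and to satisfy the sectorial decay away from its poles, Lemma \ref{lemma_key} applies verbatim (its statement already allows $N$ to be merely meromorphic, as flagged in the remark following it) and yields $\widehat{\mu}(t,k) = \sum_{\Im\omega\ge\lambda} Q_{\omega,k}(t)e^{-i\omega t} + e^{\lambda t}r(t)$, with $Q_{\omega,k}$ read off from the principal part of $\mathcal{M}_k = \mathcal{N}_k/D_k$ at $\omega$. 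Collecting the three pole types (I) zeros of $D_k$, (II) $\omega_1+\omega_2$, (III) $\frac{|k|}{|k_1|}\omega_1$, and noting that $d_{k_1}^{k_2}\ne 0$ precisely in the resonant case $k=\gamma k_1$, produces the four families $J,I,B,R$ of Theorem \ref{The_th}, and inverting via \eqref{intro_art_pas_fou} gives the stated $t^m e^{-i\omega t}$ forms after summing over $x$-modes in $K+K$. Finally, combining this with Proposition \ref{propo_existence_o2} shows $(h,\mu)$ solves \eqref{VPL2}, completing the proof of the proposition and hence of Theorem \ref{The_th}. The main obstacle is the bookkeeping in the resonant case: carefully performing the change of variables in the double time integral of \eqref{intro_Frep}, isolating the genuinely slowly-decaying pieces, controlling the holomorphic extension and decay of their Laplace transforms uniformly in $\lambda_0$, and correctly computing the multiplicities — i.e. the exponents $\ell,p$ and the degenerate-case corrections $\sigma,\nu$ — when frequencies collide; the non-resonant case and the handling of the $R$-remainder of $\psi$ are comparatively routine.
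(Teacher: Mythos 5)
Your plan reproduces the paper's own strategy: express the right-hand side $\mathcal{N}_k$ of \eqref{rel_disp_o2} via the Duhamel formula for $g$ and the first-order expansion of $\psi$ (the paper splits this explicitly as $\mathcal{N}^1_k + \mathcal{N}^2_k$, corresponding to your free-streaming and $\psi$-driven contributions), prove meromorphy and the $|z - i\lambda_0|^{-1}$ sectorial bound by analyzing the time-domain kernels $F^1_k$, $F^2_k$ separately in the resonant and non-resonant cases using the super-exponential decay from $f^{eq}\in\mathscr{E}(\mathbb{R}^d)$ and Theorem \ref{theo_anal_rep}, and then invoke Lemma \ref{lemma_key} (with a shift by $i\widetilde{\lambda}$, a small technicality your outline glosses over) to read off the three pole families and the degree bounds. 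This matches the paper's proof in structure and in all essential technical steps.
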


\subsection{Definition of $\mathcal{N}^1_{k}$ and $\mathcal{N}^2_{k}$ (the right hand side)}
We will first look for the right hand side of the second order dispersion relation, that was $N_k$ in the first order case.

Let $k\in (K+K) \setminus \{0\}$. By looking at the second order dispersion relation \eqref{rel_disp_o2}, we can assume, without loss of generality, that there exist $k_1,k_2 \in K$ such that $k_1+k_2=k$ and
\[ \widehat{\nabla_x \psi \cdot \nabla_v g}(t,k,v) = i \widehat{\psi}(t,k_1) k_1\cdot\nabla_v \widehat{g}(t,k_2,v).\]
Consequently, we can determine more precisely the right member of \eqref{rel_disp_o2}. However, to be rigorous we need to prove that our integrals are convergent. Indeed, as in Proposition \ref{propo_existence_o2}, there exists $\lambda_0\in \mathbb{R}$ such that for any $\lambda>\lambda_0$, we can construct a continuous and integrable function $\mathfrak{d}\in C^0(\mathbb{R}^d)\cap L^1(\mathbb{R}^d)$ such that 
\[ \forall (t,v)\in \mathbb{R}_+^* \times \mathbb{R}^d, \ |\widehat{\psi}(t,k_1)|e^{-\lambda t} + e^{-\lambda t}  |k_1\cdot\nabla_v \widehat{g}(t,k_2,v)| \leq \mathfrak{d}(v) .\]

\noindent Consequently, if $\Im z>2\lambda_0$, we can apply Lemma \ref{Hilb_to_Four} to prove that

\begin{equation*}
\begin{split}
 &\int_{\mathbb{R}^d} \frac{\Lap \left[ i \widehat{\psi}(t,k_1) k_1\cdot\nabla_v \widehat{g}(t,k_2,v) \right](z) }{v\cdot k -z} \ \textrm{d}v\\ 
= &\ i\Lap  \left[ \Four \left[\Lap \left[  i \widehat{\psi}(t,k_1) k_1\cdot\nabla_v \widehat{g}(t,k_2,v) \right](z) \right](kt)\right](z)\\
= &\  i\int_0^\infty \int_{\mathbb{R}^d}\int_0^\infty  i \widehat{\psi}(t,k_1) k_1\cdot\nabla_v \widehat{g}(t,k_2,v) e^{izt}\textrm{d}t \ e^{-i(k \tau)\cdot v}\ \textrm{d}v \ e^{i z \tau}\ {\rm d \tau}\\
= &\ - i \int_0^\infty \int_0^\infty   \widehat{\psi}(t,k_1) (k_1\cdot k) \tau  \Four \widehat{g}(t,k_2,k\tau) \  e^{i z (\tau+t)} \textrm{d}t \ {\rm d \tau} \\
= &\ -i  (k_1\cdot k)   \int_0^{\infty}  \int_{0}^{s}      \widehat{\psi}(t,k_1)  (s-t)  \Four \widehat{g}(t,k_2,k(s-t))                 \textrm{d}t  \     e^{i z s}   \textrm{d}s\\
= &\ -i (k_1\cdot k) \Lap \left[ \int_{0}^t   \widehat{\psi}(\tau,k_1)   (t-\tau)  \Four \widehat{g}(\tau,k_2,k(t-\tau)) {\rm d \tau} \right](z),
\end{split}
\end{equation*}
where we have used the change of variable $\tau+t \leftarrow s$ and the notation $$
 \Four \widehat{g}(t,k,\xi) =  \Four [\widehat{g}(t,k,v)](\xi).$$
Furthermore,  using definition of $g$ (see \eqref{g_Duhamel_1}), we can precise $\Four \widehat{g}(\tau,k_2,k(t-\tau))$. Indeed, we start from
$$
\hat{g}(t,k,v) = e^{-ik\cdot vt} \widehat{g_0}(k,v) + i \int_{0}^t e^{-i k\cdot v(t-s)} \widehat{\psi}(s,k) k\cdot\nabla_v f^{eq}(v) \textrm{d}s,
$$
so that we have
\[ \Four \widehat{g}(t,k_2,\xi) = \Four [\widehat{g_0}(k_2,v)](\xi +  k_2t)  + i \int_{0}^t \widehat{\psi}(s,k_2) \Four \left[ k_2.\nabla_v f^{eq} \right](\xi + k_2 (t-s)) \textrm{d}s.\]
Consequently, we get
\begin{multline*} 
\Four \widehat{g}(\tau,k_2,k(t-\tau)) =   \Four [\widehat{g_0}(k_2,v)](k t +  (k_2-k)\tau)  \\
- i\int_{0}^{\tau} \widehat{\psi}(s,k_2)    \Four \left[ k_2\cdot\nabla_v f^{eq} \right](k (t -\tau) + k_2 (\tau-s)) \textrm{d}s.
\end{multline*}

\medskip

So we have two numerators to study for this dispersion relation. On the one hand, we have
\begin{equation}  \mathcal{N}^1_{k}(z) :=  - \frac{k_1\cdot k}{|k|^2} \Lap \left[ F^1_k(t) \right] (z),
\label{defNk1}
\end{equation}
with
\[ F^1_k(t) :=  \int_{0}^t   \widehat{\psi}(\tau,k_1)   (t-\tau) \Four [\widehat{g_0}(k_2,v)](k t +  (k_2-k)\tau) {\rm d\tau}.\]
On the other hand, we have
\begin{equation}\mathcal{N}^2_{k}(z) :=   i\frac{k_1\cdot k}{|k|^2} \Lap \left[ F^2_k(t) \right] (z),
\label{defNk2}
\end{equation}
with 
\[ F^2_k(t) = \int_{0}^t   \widehat{\psi}(\tau,k_1)   (t-\tau)  \int_{0}^{\tau} \widehat{\psi}(s,k_2)    \Four \left[ k_2\cdot \nabla_v f^{eq} \right](k (t -\tau) + k_2 (\tau-s)) \textrm{d}s \ {\rm d\tau}. \]
So with these notations,the  dispersion relation \eqref{resol_rel_disp_o2} may be written as, for all $z$ such that  $\Im z>2\lambda_0$,
\begin{equation}
\label{rel_disp_o2_nice}
\Lap \left[  \widehat{\mu}(t,k)	  \right](z) D_k(z) =   \mathcal{N}^1_{k}(z) +  \mathcal{N}^2_{k}(z) .
\end{equation}

\medskip

\subsection{Estimates for $\mathcal{N}^1_{k}$ and $\mathcal{N}^2_{k}$}
We are going to apply the same strategy as for the resolution of the first order dispersion relation. It will be solve using Lemma \ref{lemma_key}. The denominator has been studied in Proposition \ref{Dk_cool}. The following lemma describes the regularity and the behavior of the numerators $\mathcal{N}^1_{k}$ and $\mathcal{N}^2_{k}$.

\begin{lemma}
\label{lem_belle_N_is_nice}
The function $\mathcal{N}^1_{k}$, $\mathcal{N}^2_{k}$ have a meromorphic continuation and there exist $\widetilde{\lambda} \in \mathbb{R}$ and $\beta \in (0,\frac{\pi}2)$ such that
\[   \sup_{z \in i\widetilde{\lambda} +i\Sigma_{\beta+\frac{\pi}2}} |z-i\widetilde{\lambda}| \left[ |\mathcal{N}^1_{k}(z)| + |\mathcal{N}^2_{k}(z)| \right]<\infty. \]
\noindent $\bullet$ If there exists $\gamma\in (0,1)$ such $k_2 = -\gamma k_1$ then the poles of $\mathcal{N}^1_{k}$ are the points $\omega\in \mathbb{C}$ such that $\omega=\omega_1\frac{|k|}{|k_1|}$ where $D_{k_1}(\omega_1)=0$ and its multiplicity is smaller than or equal to $n_{k_1,\omega_1}+1$. $\mathcal{N}^2_{k}$ has two kinds of poles. On the one hand, there are the points $\omega\in \mathbb{C}$ such that $\omega=\omega_1+\omega_2$ where $D_{k_1}(\omega_1)=D_{k_2}(\omega_2)=0$. On the other hand, there are the points $\omega\in \mathbb{C}$ such that $\omega=\omega_1\frac{|k|}{|k_1|}$ where $D_{k_1}(\omega_1)=0$. The multiplicity of a pole belonging to the two families is smaller than or equal to $n_{k_1,\omega_1}+n_{k_2,\omega_2}+1$. Else the multiplicity of a pole of the first kind is smaller than or equal to $n_{k_1,\omega_1}+n_{k_2,\omega_2}-1$ and the multiplicity of a pole of the second kind is smaller than or equal to $n_{k_1,\omega_1}+1$.

\noindent $\bullet$ Else $\mathcal{N}^1_{k}$ is an entire function and the poles of $\mathcal{N}^2_{k}$ are the points $\omega\in \mathbb{C}$ such that $\omega=\omega_1+\omega_2$ where $D_{k_1}(\omega_1)=D_{k_2}(\omega_2)=0$ and its multiplicity is smaller than  or equal to $n_{k_1,\omega_1}+n_{k_2,\omega_2}-1$.
\end{lemma}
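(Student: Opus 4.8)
The plan is to analyze the two numerators $\mathcal{N}^1_k$ and $\mathcal{N}^2_k$ separately, using the representation formulas \eqref{defNk1}, \eqref{defNk2} together with the expansion of $\psi$ from Proposition \ref{resol_rel_disp_lin}. The first step is to substitute the decomposition $\widehat{\psi}(t,k_j) = \sum_{D_{k_j}(\omega_j)=0,\ \Im\omega_j\geq\lambda} P_{k_j,\omega_j}(t)e^{-i\omega_j t} + e^{\lambda t}r_{k_j,\lambda}(t)$ into $F^1_k$ and $F^2_k$. By linearity this splits each $F^j_k$ into a finite sum of ``elementary'' terms indexed by the zeros $\omega_1$ (and $\omega_2$) of $D_{k_1}$ (and $D_{k_2}$), plus remainder terms involving the bounded analytic factors $r_{k_j,\lambda}$. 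The elementary terms are precisely of the form suggested by $F^{(rep)}_k$ in \eqref{intro_Frep} (with polynomial prefactors $P_{k_j,\omega_j}$), so the heart of the proof is to carry out rigorously the resonant/non-resonant dichotomy sketched in the introduction for each such term.

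The second step is the non-resonant case, i.e.\ when $k_1,k_2$ are not collinear, respectively when studying $\mathcal{N}^1_k$ the case $k_2\neq -\gamma k_1$. Here one uses that on the integration domain $0\leq \tau\leq t$ (or $0\leq s\leq\tau\leq t$) the argument of $\Four[f^{eq}]$ or $\Four[\widehat{g_0}(k_2,\cdot)]$ stays bounded below by a constant times $t$ (or $s$), so that, since $f^{eq}, \widehat{g_0}(k_2,\cdot)\in\mathscr{E}(\mathbb{R}^d)$ and hence their Fourier transforms decay faster than any exponential in \emph{every} direction, the time integral converges absolutely with super-exponential speed. One then extracts the leading exponential $e^{-i(\omega_1+\omega_2)t}$ (resp.\ $e^{-i\frac{|k|}{|k_1|}\omega_1 t}$... actually in the truly non-resonant $\mathcal{N}^1_k$ case there is no pole at all, only a super-exponentially small term), with a polynomial prefactor whose degree is controlled by $\deg P_{k_1,\omega_1}+\deg P_{k_2,\omega_2}$, which gives the stated multiplicity bounds $n_{k_1,\omega_1}+n_{k_2,\omega_2}-1$. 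Taking the Laplace transform via \eqref{intro_art_pas_fou} turns these into the claimed poles and, for the remainder, into a function holomorphic and bounded (times $|z-i\widetilde\lambda|^{-1}$) on a shifted half-plane; the analytic representation Theorem \ref{theo_anal_rep} is what certifies the uniform decay estimate.

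The third step is the resonant case $k_2=-\gamma k_1$ with $\gamma\in(0,1)$. Following the introduction, one performs the change of variables so that $\Four[f^{eq}]$ (resp.\ $\Four[\widehat{g_0}]$) is evaluated only along the line $\mathbb{R}k_1$; the inner integral over the bounded transverse variable is harmless, and one splits the ``box'' $\{0\leq\tau\leq s\leq t\}$-type domain into a product region (which produces the genuinely new \emph{Best frequency} $\omega_b=\frac{|k|}{|k_1|}\omega_1$), a $t$-independent region (absorbed into the leading coefficient), and a tail region $\{\text{one variable}>t\}$ which is super-exponentially small because $\Four[f^{eq}]$ restricted to the line still decays faster than any exponential. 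The polynomial prefactors from $P_{k_1,\omega_1}$ and $P_{k_2,\omega_2}$ multiply and integrate up, producing the degree bounds $n_{k_1,\omega_1}+1$ for the Best-frequency pole of $\mathcal{N}^1_k$ and $n_{k_1,\omega_1}+n_{k_2,\omega_2}+1$ for $\mathcal{N}^2_k$ at a pole shared by both families (and the smaller bounds otherwise). The degenerate coincidences (when $\omega_1+\omega_2$, $\frac{|k|}{|k_1|}\omega_1$ and zeros of $D_k$ collide) are handled just by adding multiplicities, which is where the integers $\sigma,\nu$ of Remark \ref{relou} enter.

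The main obstacle I expect is bookkeeping the \emph{multiplicities} precisely: one must track how the polynomial prefactors $P_{k_j,\omega_j}$ combine after two nested time integrations and, crucially, what extra degree is gained when $\omega_1+\omega_2$ equals $\frac{|k|}{|k_1|}\omega_1$ or when either equals a zero of $D_k$ — the case distinctions in the statement (``belonging to the two families'', etc.) come entirely from this. A secondary technical point is verifying the \emph{uniform} bound $\sup_{z\in i\widetilde\lambda+i\Sigma_{\beta+\pi/2}}|z-i\widetilde\lambda|\,|\mathcal{N}^j_k(z)|<\infty$: this requires that after subtracting the explicit rational (pole) part, the remainder is the Laplace transform of a function bounded by $e^{\widetilde\lambda t}$ times something analytic on a sector, which follows from applying Theorem \ref{theo_anal_rep} to each elementary remainder term — but one must make sure the super-exponential decay is uniform in the relevant directions of the complexified time variable, using \eqref{def_Erd} with $\beta$ small. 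The non-resonant estimates are routine once the geometry (the lower bound $|\tau k_1+s k_2|\geq cs$) is recorded; the resonant case and the multiplicity accounting are the real work.
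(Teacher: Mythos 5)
Your proposal matches the paper's approach: the paper proves Lemmas \ref{lem_F1_is_cool} and \ref{lem_F2_is_cool} giving asymptotic expansions in time of $F^1_k$ and $F^2_k$ (by substituting the expansion of $\widehat\psi$ from Proposition \ref{resol_rel_disp_lin}, splitting into resonant and non-resonant cases via the geometric lower bound on $|\tau k+(s-\tau)k_2|$, using the super-exponential decay furnished by $\mathscr{E}(\mathbb{R}^d)$, and isolating the Best-frequency term through the resonant change of variables), and then deduces Lemma \ref{lem_belle_N_is_nice} directly from Theorem \ref{theo_anal_rep}. Your handling of the resonant/non-resonant dichotomy, the multiplicity bookkeeping (including the degenerate coincidence $\omega_1+\omega_2=\frac{|k|}{|k_1|}\omega_1$), and the use of the analytic representation theorem to certify the uniform estimate are all the same as the paper's.
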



Now, we focus on proving Lemma \ref{lem_belle_N_is_nice}. However, using  analytic representation Theorem \ref{theo_anal_rep}, it is directly deduced of the two following lemmas (Lemma \ref{lem_F1_is_cool} and Lemma \ref{lem_F2_is_cool}) involving properties of $F^1_k$ and $F^2_k$.


\begin{lemma}
\label{lem_F1_is_cool}
For all $\lambda \in \mathbb{R}$ there exist $\beta \in (0,\frac{\pi}2)$ and an analytic and bounded function on $\Sigma_{\beta}$ denoted $r$ such that

\noindent $\bullet$ if $k_2 = -\gamma k_1, \ \gamma\in (0,1)$, then  for all $t>0$
\begin{equation}
\label{F^1_res}
 F^1_k(t) = \sum_{\substack{ D_{k_1}(\omega_1)=0 \\
													\Im \omega_1 \geq \lambda	}} R_{\omega_1}(t) e^{-i \omega_1 \frac{|k|}{|k_1|}t} + e^{\lambda t} r(t), 
\end{equation}
													with $R_{\omega_1}$ a polynomial of degree smaller than or equal to $n_{k_1,\omega_1}$,
													
\noindent $\bullet$ else, for all $t>0$
\begin{equation}
\label{F^1_nonres}
F^1_k(t) = e^{\lambda t} r(t). 
\end{equation}

\end{lemma}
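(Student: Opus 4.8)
The plan is to substitute the expansion of $\widehat{\psi}(\cdot,k_1)$ provided by Proposition \ref{resol_rel_disp_lin} (applied at a level $\lambda'$ to be fixed later, depending on $\lambda$ and on the geometry of $k_1,k_2$) into the integral defining $F^1_k$, and to exploit two facts: that $k-k_2=k_1$, so that $\Four[\widehat{g_0}(k_2,v)](kt+(k_2-k)\tau)=\Four[\widehat{g_0}(k_2,v)](kt-k_1\tau)$; and that, since $\widehat{g_0}(k_2,\cdot)\in\mathscr{E}(\mathbb{R}^d)$, the map $\xi\mapsto\Four[\widehat{g_0}(k_2,v)](\xi)$ extends to an entire function on $\mathbb{C}^d$ which, together with all its polynomial multiples, decays faster than any exponential on $\mathbb{R}^d$ and on thin complex sectors around it. Writing $\widehat{\psi}(\tau,k_1)=\sum_{D_{k_1}(\omega_1)=0,\ \Im\omega_1\ge\lambda'}P_{k_1,\omega_1}(\tau)e^{-i\omega_1\tau}+e^{\lambda'\tau}r_{k_1,\lambda'}(\tau)$, one obtains $F^1_k$ as a finite sum of ``elementary'' pieces issued from the polynomial--exponential terms, plus one piece issued from the analytic remainder $r_{k_1,\lambda'}$.

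For the non-resonant case (i.e.\ when $k_2$ is not of the form $-\gamma k_1$ with $\gamma\in(0,1)$) I would first record the elementary geometric fact that there is $c>0$ with $|kt-k_1\tau|\ge c(t+\tau)$ for all $0\le\tau\le t$: this is clear when $k$ and $k_1$ are linearly independent, and a short computation gives it when $k=c_0k_1$ with $c_0<0$ or $c_0>1$ (the excluded values $c_0=0$, $c_0=1$, $c_0\in(0,1)$ corresponding respectively to $k=0$, $k_2=0$, and the resonant regime). Combining this bound with the super-exponential decay of $\Four[\widehat{g_0}(k_2,\cdot)]$ and the at-most-exponential growth of $\widehat{\psi}(\cdot,k_1)$ shows that $F^1_k(t)$, and its holomorphic extension to a thin sector $\Sigma_\beta$ obtained by integrating over the segment $[0,t]$, decays faster than $e^{\lambda t}$; hence $F^1_k(t)=e^{\lambda t}r(t)$ with $r$ analytic and bounded on $\Sigma_\beta$, which is \eqref{F^1_nonres}.

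For the resonant case $k_2=-\gamma k_1$, $\gamma\in(0,1)$, we have $k=(1-\gamma)k_1$, so $kt-k_1\tau=k_1\big((1-\gamma)t-\tau\big)$, which vanishes along the segment $\tau=(1-\gamma)t$. The key move is the change of variable $\sigma=(1-\gamma)t-\tau$ (so $t-\tau=\gamma t+\sigma$), which turns each elementary piece into
\[
e^{-i(1-\gamma)\omega_1 t}\int_{-\gamma t}^{(1-\gamma)t}P_{k_1,\omega_1}\big((1-\gamma)t-\sigma\big)\,e^{i\omega_1\sigma}\,(\gamma t+\sigma)\,\Four[\widehat{g_0}(k_2,v)](\sigma k_1)\,\textrm{d}\sigma .
\]
Then one extends the $\sigma$-integral to all of $\mathbb{R}$: the full-line integral is a polynomial in $t$ (expand $P_{k_1,\omega_1}((1-\gamma)t-\sigma)(\gamma t+\sigma)$ in powers of $t$ and use that the $\sigma$-moments of $e^{i\omega_1\sigma}\Four[\widehat{g_0}(k_2,v)](\sigma k_1)$ converge, by the super-exponential decay), of degree at most $\deg P_{k_1,\omega_1}+1\le n_{k_1,\omega_1}$, which produces the term $R_{\omega_1}(t)e^{-i\frac{|k|}{|k_1|}\omega_1 t}$ since $\frac{|k|}{|k_1|}=1-\gamma$. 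The two tails $\int_{(1-\gamma)t}^{\infty}$ and $\int_{-\infty}^{-\gamma t}$, on which $|\sigma|\ge\min(\gamma,1-\gamma)\,t$, decay faster than any exponential and are harmless; similarly the piece issued from $r_{k_1,\lambda'}$, after the same change of variable, is bounded by a polynomial times $e^{(1-\gamma)\lambda' t}$ and can be absorbed into $e^{\lambda t}r(t)$ once $\lambda'$ has been chosen negative enough. As before one runs the whole computation for $t\in\Sigma_\beta$ (every integrand being entire in $t$ except the $r_{k_1,\lambda'}$ piece, which is analytic on a sector), shrinking $\beta$ if needed, to get that what is left is $e^{\lambda t}$ times a bounded analytic function, i.e.\ \eqref{F^1_res}.

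I expect the main technical obstacle to be the bookkeeping of exponential growth rates: the change of variable $\tau\mapsto\sigma$ rescales all rates by the factor $1-\gamma\in(0,1)$, so the level $\lambda'$ at which $\widehat{\psi}(\cdot,k_1)$ is expanded must be calibrated carefully in terms of $\lambda$ and $\gamma$ in order that both the $r_{k_1,\lambda'}$-contribution and the low-lying elementary terms genuinely end up inside the remainder $e^{\lambda t}r(t)$ of the stated form. A secondary point requiring care is upgrading the real-variable estimates to bounds on an honest complex sector $\Sigma_\beta$ (by deforming contours and invoking the sectorial decay built into the definition of $\mathscr{E}(\mathbb{R}^d)$), rather than merely along $\mathbb{R}_+^{*}$.
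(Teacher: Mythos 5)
Your proposal is correct and follows essentially the same route as the paper: in the non-resonant case, the quantitative lower bound $|kt-k_1\tau|\ge c(t+\tau)$ for $0\le\tau\le t$ is equivalent (by homogeneity, taking $\tau=\theta t$) to the paper's bound $|(1-\theta)k+\theta k_2|\ge\delta$ after rescaling to the unit interval; and in the resonant case you use the same change of variable $\sigma=(1-\gamma)t-\tau$, the same splitting of the $\sigma$-integral into a full-line part (yielding the polynomial of degree at most $n_{k_1,\omega_1}$ times $e^{-i(1-\gamma)\omega_1 t}$) plus two super-exponentially small tails, and the same treatment of the $r_{k_1,\lambda'}$ remainder with $\lambda'$ calibrated so that $(1-\gamma)\lambda'<\lambda$. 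Your two flagged concerns — calibrating $\lambda'$ and upgrading to the sector $\Sigma_\beta$ — are precisely the two points the paper handles (the second being spelled out explicitly only in the non-resonant case and declared analogous elsewhere).
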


\begin{lemma}
\label{lem_F2_is_cool}
For all $\lambda \in \mathbb{R}$ there exist $\beta \in (0,\frac{\pi}2)$ and an analytic and bounded function on $\Sigma_{\beta}$ denoted $r$ such that

\noindent $\bullet$ if $k_2 = -\gamma k_1, \ \gamma\in (0,1)$, then  for all $t>0$
\begin{equation}
\label{F^2_res}
F^2_k(t) = \sum_{\substack{ D_{k_1}(\omega_1)=0 \\
													\Im \omega_1 \geq \lambda	}} \left[ R_{k_1,k_2}^{\omega_1}(t) e^{-i \omega_1 \frac{|k|}{|k_1|}t} +  \sum_{\substack{ D_{k_2}(\omega_2)=0 \\
													\Im \omega_2 \geq \lambda	}} Q^{k_1,k_2}_{\omega_1,\omega_2}(t) e^{-i (\omega_1+\omega_2) t} 
													\right] + e^{\lambda t} r(t), 
\end{equation}
	with $Q^{k_1,k_2}_{\omega_1,\omega_2}$ a polynomial of degree smaller than or equal to $n_{k_1,\omega_1}+n_{k_2,\omega_2}-2$ and $R_{k_1,k_2}^{\omega_1}$  a polynomial of degree smaller than or equal to $n_{k_1,\omega_1}$ (if there exist $\omega_1,\omega_2$ such that $\omega_1+\omega_2 = \omega_1 \frac{|k|}{|k_1|}$ 
	 the maximal possible degree of $Q^{k_1,k_2}_{\omega_1,\omega_2}$ and $R_{k_1,k_2}^{\omega_1}$
	  is $n_{k_1,\omega_1}+n_{k_2,\omega_2} $  ) 
	
\noindent $\bullet$ else, for all $t>0$
\begin{equation}
\label{F^2_nonres}
F^2_k(t) = \sum_{\substack{ D_{k_1}(\omega_1)=0 \\
													\Im \omega_1 \geq \lambda	}}  \sum_{\substack{ D_{k_2}(\omega_2)=0 \\
													\Im \omega_2 \geq \lambda	}} Q^{k_1,k_2}_{\omega_1,\omega_2}(t) e^{-i (\omega_1+\omega_2) t} + e^{\lambda t} r(t),
\end{equation}
with $Q^{k_1,k_2}_{\omega_1,\omega_2}$ a polynomial of degree smaller than or equal to $n_{k_1,\omega_1}+n_{k_2,\omega_2}-2$.
\end{lemma}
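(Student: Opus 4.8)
Following the strategy used for the first–order relation and for $F^1_k$ in Lemma~\ref{lem_F1_is_cool}, the plan is to feed the first–order asymptotic expansion of $\widehat\psi(\cdot,k_1)$ and $\widehat\psi(\cdot,k_2)$ furnished by Proposition~\ref{resol_rel_disp_lin} into the definition of $F^2_k$ and then to track what the double time integral does to the resulting elementary blocks. Fix $\lambda$ and pick two auxiliary truncation levels $\lambda_1,\lambda_2$, to be taken very negative depending on $\lambda$. For $j=1,2$ write $\widehat\psi(\cdot,k_j)=M_j+\rho_j$ with $M_j(\tau)=\sum_{D_{k_j}(\omega_j)=0,\ \Im\omega_j\ge\lambda_j}P_{k_j,\omega_j}(\tau)e^{-i\omega_j\tau}$ a finite sum (finite by Corollary~\ref{zero_loc_Dk}) and $\rho_j(\tau)=e^{\lambda_j\tau}r_{k_j}(\tau)$ with $r_{k_j}$ analytic and bounded on a sector. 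Expanding the product $\widehat\psi(\tau,k_1)\widehat\psi(s,k_2)$ splits $F^2_k=A+B+C+D$, where $A$ collects the $M_1M_2$ terms and $B,C,D$ are the cross terms containing at least one remainder factor.

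For the cross terms the decisive point is that $f^{eq}\in\mathscr E(\mathbb R^d)$ forces, through \eqref{def_Erd}, $\Four[k_2\cdot\nabla_v f^{eq}](\xi)=i(k_2\cdot\xi)\Four[f^{eq}](\xi)$ to decay faster than $e^{-\mu|\xi|}$ for every $\mu>0$, along complex directions as well. In the non-resonant case $|k(t-\tau)+k_2(\tau-s)|\ge c(t-s)$ on the integration domain, so, bounding $|M_j|$ by $\mathrm{poly}(\tau)\,e^{\overline\lambda_j\tau}$ with $\overline\lambda_j$ the supremum of the imaginary parts of the zeros of $D_{k_j}$ (finite by Corollary~\ref{zero_loc_Dk}), $|r_{k_j}|$ by a constant, and using that decay, a direct estimate yields $|B|+|C|+|D|\le \mathrm{poly}(t)\big(e^{(\overline\lambda_1+\lambda_2)t}+e^{(\lambda_1+\overline\lambda_2)t}+e^{(\lambda_1+\lambda_2)t}\big)$; choosing $\lambda_1,\lambda_2<\lambda-\max(\overline\lambda_1,\overline\lambda_2)-1$ makes all three $o(e^{\lambda t})$, and rerunning the same computation with $t$ in a sector (licit since the integrands extend holomorphically and keep decaying) turns them into a term of the form $e^{\lambda t}r(t)$ with $r$ analytic and bounded. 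The resonant case is treated identically after the change of variables below.

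For the main part $A$ it is enough to analyse one elementary term
\[
\mathcal I_{\omega_1,\omega_2}(t)=\int_0^t P_{k_1,\omega_1}(\tau)e^{-i\omega_1\tau}(t-\tau)\int_0^\tau P_{k_2,\omega_2}(s)e^{-i\omega_2 s}\,\Four[k_2\cdot\nabla_v f^{eq}]\big(k(t-\tau)+k_2(\tau-s)\big)\,ds\,d\tau
\]
and then sum over the pairs of zeros. Put $a=t-\tau$, $b=\tau-s$, so the argument of $\Four[k_2\cdot\nabla_v f^{eq}]$ is $ka+k_2b$. If $k_2\neq-\gamma k_1$ for every $\gamma\in(0,1)$, then $k$ and $k_2$ are linearly independent, hence $|ka+k_2b|\ge c(a+b)$; the integrand thus decays faster than any exponential in $(a,b)$, one extends the triangle $\{a,b\ge0,\ a+b\le t\}$ to the whole quadrant up to a super-exponentially small error, and expanding $P_{k_1,\omega_1}(t-a)$, $P_{k_2,\omega_2}(t-a-b)$ in powers of $t$ leaves $e^{-i(\omega_1+\omega_2)t}$ times a polynomial in $t$ with absolutely convergent integral coefficients — this is \eqref{F^2_nonres}. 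If $k_2=-\gamma k_1$, $\gamma\in(0,1)$, then $k=(1-\gamma)k_1$ and $k,k_1,k_2$ are collinear; I would then replace $\tau$ by the resonant variable $\sigma$ defined through $k(t-\tau)+k_2(\tau-s)=k_1\sigma$, and, setting $u=t-s$, the phases regroup as $e^{-i\omega_1\tau-i\omega_2 s}=e^{-i(\omega_1+\omega_2)t}e^{i(\gamma\omega_1+\omega_2)u}e^{i\omega_1\sigma}$, so that
\[
\mathcal I_{\omega_1,\omega_2}(t)=e^{-i(\omega_1+\omega_2)t}\int_0^t\Big(\int_{-\gamma u}^{(1-\gamma)u}\widetilde P(t,u,\sigma)\,e^{i\omega_1\sigma}\,\Four[k_2\cdot\nabla_v f^{eq}](k_1\sigma)\,d\sigma\Big)e^{i(\gamma\omega_1+\omega_2)u}\,du
\]
with $\widetilde P$ polynomial. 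Extending the inner $\sigma$-integral to $\mathbb R$ (again a super-exponentially small, analytically legitimate error thanks to the entirety and sectorial decay encoded in $\mathscr E$) leaves $\int_0^t\Pi(t,u)\,e^{i(\gamma\omega_1+\omega_2)u}\,du$ with $\Pi$ polynomial, which one evaluates by repeated integration by parts, i.e. via \eqref{intro_art_pas_fou}: this produces exactly the two frequencies $e^{-i(\omega_1+\omega_2)t}$ and $e^{-i(1-\gamma)\omega_1 t}=e^{-i\frac{|k|}{|k_1|}\omega_1 t}$ — the Best frequency of \eqref{F^2_res}. The polynomial degrees claimed in the statement follow by bookkeeping the $t$-degrees through this change of variables; in the generic situation $\gamma\omega_1+\omega_2\neq0$ the oscillatory factor $e^{i(\gamma\omega_1+\omega_2)u}$ is present, whereas in the degenerate situation $\gamma\omega_1+\omega_2=0$ — which is precisely $\omega_1+\omega_2=\frac{|k|}{|k_1|}\omega_1$ — it disappears and $\int_0^t u^n\,du$ raises the degree by one, which accounts for the exceptional bounds.

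The step I expect to be the real obstacle is not the identification of the three frequencies but the quantitative control: obtaining the remainder bound $e^{\lambda t}$ uniformly for $t$ in a genuine sector $\Sigma_\beta$ (not merely the half-line), which forces one to complexify $t$ in all the one-dimensional integrals above and to use the sectorial, faster-than-exponential decay of $\mathscr E(\mathbb R^d)$ exactly in the spirit of Theorem~\ref{theo_anal_rep}; and keeping the degree/frequency bookkeeping honest — in particular checking that the auxiliary-level contributions introduced in $B,C,D$ get reabsorbed into the admissible $Q^{k_1,k_2}_{\omega_1,\omega_2}$, $R^{\omega_1}_{k_1,k_2}$ summands or into $e^{\lambda t}r(t)$, and never generate a frequency outside $\{\omega_1+\omega_2,\ \frac{|k|}{|k_1|}\omega_1\}$.
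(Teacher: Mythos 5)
Your overall strategy — inserting the first–order expansion of $\widehat\psi(\cdot,k_1)$ and $\widehat\psi(\cdot,k_2)$, performing changes of variables adapted to resonance/non-resonance, extending the resulting integrals to infinity up to super-exponentially small errors, and reading off the frequencies — is indeed the paper's strategy, and your analysis of the pure $M_1M_2$ part and of the non-resonant cross terms is sound.

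However, there is a genuine gap in the resonant case, precisely where you write ``The resonant case is treated identically.'' It is not. In the resonant case the cross term involving the \emph{main part} $M_1$ of $\widehat\psi(\cdot,k_1)$ and the \emph{remainder} $\rho_2=e^{\lambda_2 t}r_{k_2,\lambda_2}$ of $\widehat\psi(\cdot,k_2)$ is \emph{not} $o(e^{\lambda t})$: after the resonant change of variables, the $\tau$-integral of $e^{i\omega_1\tau}\Four[k_2\cdot\nabla_v f^{eq}](k_1\tau)$ over $\mathbb R$ produces a nonzero constant $A$, and the remaining $s$-integral against $e^{-i\omega_1\gamma s}e^{\lambda_2 s}r_{k_2,\lambda_2}(s)$ produces a nonzero constant $B_{\lambda_2,p}$ that does \emph{not} vanish as $\lambda_2\to-\infty$ (since the asymptotic expansion does not converge as $\lambda_2\to-\infty$, $e^{\lambda_2 s}r_{k_2,\lambda_2}(s)$ stays $O(1)$). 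The net contribution is of size $t^{m}e^{(1-\gamma)\Im\omega_1 t}$, which cannot be absorbed into $e^{\lambda t}r(t)$ once $\lambda<(1-\gamma)\Im\omega_1$. So with your decomposition $F^2_k=A+B+C+D$ and your claim that $B,C,D$ are all remainders, the resulting $r(t)$ would be unbounded, and the proof does not close. The paper uses a different, asymmetric decomposition in the resonant case, $q[a_1+b_1,a_2+b_2]=q[a_1,a_2]+q[a_1,b_2]+q[b_1,a_2+b_2]$, and explicitly extracts a Best-frequency leading term $\sum_p C_m^p t^{m-p}AB_{\lambda_2,p}e^{-i\omega_1\frac{|k|}{|k_1|}t}$ from the mixed piece $q[a_1,b_2]$; this is even flagged in a dedicated Remark as contributing ``to the second order expansion, and not as a remainder term.'' Without this extra contribution, the polynomial $R^{\omega_1}_{k_1,k_2}$ you would obtain from the $M_1M_2$ term alone has the right degree but the wrong coefficients, and more importantly your remainder estimate fails. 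You need to treat $q[M_1,\rho_2]$ as a leading-order object in the resonant case.
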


\medskip

\begin{remark}
In Lemma \ref{lem_belle_N_is_nice}, we need that the inequality is true for a given $\tilde{\lambda}$, in order to apply Lemma \ref{lemma_key}. However, applying Theorem \ref{theo_anal_rep}, we deduce from Lemmae \ref{lem_F1_is_cool}
and \ref{lem_F2_is_cool} that the inequality is true \emph{for all} $\tilde{\lambda}\in \mathbb{R}$. On the other hand, we have needed that Lemma  \ref{lem_F1_is_cool} and  \ref{lem_F2_is_cool} are true for all $\lambda \in \mathbb{R}$, in order to prove that $\mathcal{N}_k^1$ and $\mathcal{N}_k^2$ are meromorphic.
\end{remark}

We are going to prove these lemmas distinguishing the non resonant case from the resonant case (when there exists $\gamma\in (0,1)$ such that $k_2=-\gamma k_1$). In order to get proofs as clear as possible we do not prove that the remainder term can be extended on complex cones and we only control them on $\mathbb{R}_+^*$. Indeed, there are no real issues to extend them and the arguments to control them on $\Sigma_{\alpha}$ or $\mathbb{R}_+^*$ are the same. Furthermore, the notations induced for the complex extensions are quite heavy and so do not help to understand the ideas. However, in the first proof, to give an example, we prove the analytic extension and we really estimate it.

\subsection{Proof of Lemma \ref{lem_F1_is_cool} in the non-resonant case.}  
Since we are studying the non-resonant case, there exists $\delta >0$ such that
\[  \forall \theta\in (0,1), \ \delta \leq |(1-\theta) k + \theta k_2|.  \]
Indeed, in the resonant case there exists $\gamma\in (0,1)$ such that $k_2=-\gamma k_1=\gamma(k_2-k)$, so that $(1-\gamma)k_2+\gamma k =0$.
We have proven in Proposition \ref{resol_rel_disp_lin} that there exists $\alpha\in (0,\frac{\pi}2)$ such that $\widehat{\psi}(t,k_1)$ extends to an analytic function on $\Sigma_{\alpha}$ and that there exists $\lambda_0\in \mathbb{R}$ and $M >0$ such that 
\[\forall z\in \Sigma_{\alpha},  |\widehat{\psi}(z,k_1)| \leq M e^{\lambda_0 \Re z}.\]
Furthermore, since $v\mapsto\widehat{g_0}(k_2,v)\in \mathscr{E}(\mathbb{R}^d)$, its Fourier transform extends to an entire function on $\mathbb{C}^d$ and we can assume (choosing $\alpha$ small enough) that for all $\lambda_2 \in \mathbb{R}$ there exists $C_{\lambda_2}>0$ such that
\begin{equation}
\label{lambda_2_control}
\forall z \in \Sigma_{\alpha} \mathbb{R}^d, \ |\Four  \left[ \widehat{g_0}(k_2,v)\right] (z)|\leq C_{\lambda_2} e^{\lambda_2 |\Re z|}. 
\end{equation}

Now observe that by a change of variable, $F^1_k(t)$ can be written as
\[  F^1_k(t) = t^2 \int_{0}^1 (1-\theta) \widehat{\psi}(\theta t,k_1)   \Four [\widehat{g_0}(k_2,v)](t \left( (1-\theta) k + \theta k_2\right) )  {\rm d\theta}. \]
Consequently, $F^1_k(t)$ naturally extends to an analytic function on $\Sigma_{\alpha}$. Now, we have to control $F^1_k(z) e^{-\lambda z}$ on $\Sigma_{\alpha}$ for any $\lambda \in \mathbb{R}$. Indeed, we have, for $z\in \Sigma_{\alpha}$, as we can assume $\lambda_2\le 0$,
\begin{align*}
|F^1_k(z) e^{-\lambda z}| &\leq  |z|^2e^{-\lambda \Re z}\int_{0}^{1}   |\widehat{\psi}(\theta z,k_1)|  (1-\theta) \left| \Four [\widehat{g_0}(k_2,v)](z \left((1-\theta) k + \theta k_2\right) ) \right| {\rm d\theta} \\
&\leq C_{\lambda_2} M |z|^2 \int_{0}^{1}   e^{(\lambda_0 \theta - \lambda) \Re z}   (1-\theta) e^{\lambda_2 \Re z |(1-\theta) k + \theta k_2| }{\rm d\theta} \\
&\leq C_{\lambda_2} M |z|^2   e^{(|\lambda_0| - \lambda + \delta \lambda_2) \Re z} \\
&\leq C_{\lambda_2} M \left(\frac{\Re z}{\cos \alpha}\right)^2   e^{(|\lambda_0| - \lambda + \delta \lambda_2) \Re z}.
\end{align*}
So this quantity is bounded uniformly with respect to $z\in  \Sigma_{\alpha}$ if $\lambda_2 < \frac{\lambda- |\lambda_0|}{\delta}.$

\subsection{Proof of Lemma \ref{lem_F1_is_cool} in the resonant case}
As explained before, from now, we do not pay attention to the analytic extension anymore. First, we use the resonance to give a more adapted expression of $F^1_k$
\begin{align*}
F^1_k(t) &= \int_{0}^t   \widehat{\psi}(\tau,k_1)   (t-\tau) \Four [\widehat{g_0}(k_2,v)](k_1 \left( (1-\gamma) t -   \tau \right) ) {\rm d\tau} \\
			&= \int_{-\gamma t}^{(1-\gamma) t} \widehat{\psi}( (1-\gamma) t -   s ,k_1) (\gamma t + s)  \Four \widehat{g_0}(k_2,k_1 s ) \textrm{d}s,
\end{align*}
making the change of variable $s\leftarrow (1-\gamma) t -   \tau $.
We want to expand $\psi$, so we introduce the dependency of $F^1_k$ with respect to $t\mapsto\widehat{\psi}( t ,k_1) $ by denoting $F^1_k[\widehat{\psi}(t,k_1)](t)$. Consequently, using the expansion of $\psi$ of Proposition \ref{resol_rel_disp_lin}, for any $\lambda_1 \in \mathbb{R}$, we get
\[ F^1_k[\widehat{\psi}(t,k_1)](t) = \sum_{\substack{ D_{k_1}(\omega_1)=0 \\
													\Im \omega_1 \geq \lambda_1	}}  F^1_k[  P_{k_1,\omega_1}(t) e^{-i\omega_1 t} ](t)+ F^1_k[   e^{\lambda_1 t} r_{k_1,\lambda_1}(t)](t),\]
where $r_{k_1,\lambda_1}$ is a bounded function on $\mathbb{R}_+^*$.

\medskip

First, we are going to control the remainder term $F^1_k[ e^{\lambda_1 t} r_{k_1,\lambda_1}(t)](t)$. Using the same control of the Fourier transform as previously (see \eqref{lambda_2_control}), we have,
as we can assume $\lambda_1,\lambda_2\le 0$,
\begin{multline*}
e^{-\lambda t} |F^1_k[  e^{\lambda_1 t} r_{k_1,\lambda_1}(t)](t) |\\
 \leq \| r_{k_1,\lambda_1} \|_{L^{\infty}} e^{-\lambda t} \int_{-\gamma t}^{(1-\gamma) t} e^{\lambda_1 [(1-\gamma) t -   s]}    (\gamma t + s)  \left| \Four [\widehat{g_0}(k_2,v)](k_1 s ) \right| \textrm{d}s \\
  \leq 	\| r_{k_1,\lambda_1} \|_{L^{\infty}} C_{\lambda_2} e^{-\lambda t} \int_{-\gamma t}^{(1-\gamma) t} e^{\lambda_1 [(1-\gamma) t -   s]}    (\gamma t + s) e^{ \lambda_2 |k_1| |s|} \textrm{d}s \\
 \leq 	\| r_{k_1,\lambda_1} \|_{L^{\infty}} C_{\lambda_2} e^{[(1-\gamma)\lambda_1-\lambda] t} \int_{\mathbb{R}}   (\gamma t + s) e^{ (\lambda_2 |k_1| - \lambda_1) |s|} \textrm{d}s.
 \end{multline*}
So this quantity is bounded uniformly with respect to $t\in \mathbb{R}_+^*$ if $(1-\gamma)\lambda_1<\lambda$ and $\lambda_2 |k_1|<\lambda_1$.

%

\medskip

Now, we are going to study one leading term of the type $F^1_k[ t^n e^{-i\omega_1 t} ](t)$. So, we are doing a new expansion.
\begin{align*}
F^1_k[ t^n e^{-i\omega_1 t} ](t) &= e^{-i \omega_1(1-\gamma) t } \int_{-\gamma t}^{(1-\gamma) t} \! \! \! \!  ( (1-\gamma) t -   s  )^n e^{i \omega_1  s } (\gamma t + s)  \Four [\widehat{g_0}(k_2,v)](k_1 s ) \textrm{d}s \\
											&=  \sum_{j=0}^{n+1} b_j t^j e^{-i \omega_1(1-\gamma) t }  \int_{-\gamma t}^{(1-\gamma) t} s^{n+1-j} e^{i \omega_1  s }   \Four [\widehat{g_0}(k_2,v)](k_1 s ) \textrm{d}s,
\end{align*}
where $b_0,\dots,b_{n+1}$ are real numbers. Here we recognise the leading terms of \eqref{F^1_res} since, by construction, $1-\gamma = \frac{|k|}{|k_1|}$.

\medskip

Then, observe that since the right integral is convergent (see \eqref{lambda_2_control}), there exists $A\in \mathbb{C}$ such that for any $t\in \mathbb{R}_+^*$, we have
\begin{equation*}
\begin{split}
 \int_{-\gamma t}^{(1-\gamma) t} \! \! s^{n+1-j} e^{i \omega_1  s }   \Four [\widehat{g_0}(k_2,v)](k_1 s ) \textrm{d}s =& A - \int_{-\infty}^{-\gamma t} \! \! \! s^{n+1-j} e^{i \omega_1  s }   \Four [\widehat{g_0}(k_2,v)](k_1 s ) \textrm{d}s \\ 
 &- \int_{(1-\gamma) t}^{+\infty} \! \!  \! s^{n+1-j} e^{i \omega_1  s }   \Four [\widehat{g_0}(k_2,v)](k_1 s ) \textrm{d}s  .
 \end{split}
\end{equation*}
The complex number $A$ is the leading term of this integral whereas the other ones are remainder terms. So we just have to control them. Indeed, we have
\begin{multline*}
  \left| e^{-\lambda t} t^j e^{-i \omega_1(1-\gamma) t } \int_{-\infty}^{-\gamma t} s^{n+1-j} e^{i \omega_1  s }   \Four [\widehat{g_0}(k_2,v)](k_1 s ) \textrm{d}s \right|\\
  \leq C_{\lambda_2} e^{-\lambda t} t^j e^{ \Im \omega_1(1-\gamma) t } \int_{\gamma t}^{\infty} s^{n+1-j} e^{\Im \omega_1  s }   e^{\lambda_2 |k_1| s} \textrm{d}s \\
 \leq C_{\lambda_2}  \int_{0}^{\infty}  e^{-\lambda \frac{s}\gamma} \left(\frac{s}\gamma\right)^j e^{ \Im \omega_1(1-\gamma) \frac{s}\gamma } s^{n+1-j} e^{\Im \omega_1  s }   e^{\lambda_2 |k_1| s} \textrm{d}s, 
 \end{multline*}
as we can assume $\lambda\le 0$ and since $t\le \frac{s}{\gamma}$.
Consequently, it is bounded uniformly with respect to $t$ if $|k_1|\gamma \lambda_2 <  \lambda - \Im \omega_1$.
The estimation of the third integral can be realized with the same ideas.

As we have a term in $t^{n+1}$, we see that $R_{\omega_1}$ is of degree $\le n_{k_1,\omega_1}$, since $P_{k_1,\omega_1}$ is of degree $\le n_{k_1,\omega_1}-1$.

\subsection{Proof of Lemma \ref{lem_F2_is_cool} in the non resonant case}
First, operating the change of variable $\tau'= t - \tau$, $s'= t-s$, we can write $F^2_k$ as
\[  F^2_k(t) = \int_{0\leq \tau'\leq s' \leq t}    \widehat{\psi}(t-\tau',k_1)    \widehat{\psi}(t-s',k_2)   \tau'    \Four \left[ k_2\cdot\nabla_v f^{eq} \right](k \tau'  + k_2 (s'-\tau')) \textrm{d}s' {\rm d\tau'}  ,\]
since, if $0\le s\le \tau \le t$ we get $0\le t-\tau\le t-s$ and $t-\tau\le t-s \le t$, that is $0\le  \tau' \le s'\le t$.
In order to get notations general enough but compact, we denote $u =  \Four \left[ k_2.\nabla_v f^{eq} \right]$. Since $u\in \mathscr{E}(\mathbb{R}^d)$, for all $\lambda_3 \in \mathbb{R}$ there exists a constant $C_{\lambda_3}>0$ such that
\begin{equation}
\label{lambda_3_control}
\forall \xi \in \mathbb{R}^d, \forall t>0, \ |u(t\xi)| \leq C_{\lambda_3} e^{\lambda_3 t |\xi|}  .
\end{equation}

\medskip

We define, for continuous functions $\phi_1,\phi_2$ with an exponential order, a bilinear operator $q$  by
\[  q[\phi_1,\phi_2](t) =  q[\phi_1(t),\phi_2(t)](t) =\int_{0\leq \tau\leq s \leq t}  \phi_1(t-\tau)   \phi_2(t-s)   \tau u(k \tau  + k_2 (s-\tau))  \textrm{d}s \ {\rm d\tau}.\]
With these notations, we have
\[ F^2_k(t) =  q[\widehat{\psi}(t,k_1)  ,\widehat{\psi}(t,k_2)](t).\]

\medskip

Consequently, using the expansions of $\widehat{\psi}(t,k_1) $ and $\widehat{\psi}(t,k_2)$ established in Proposition \ref{resol_rel_disp_lin}, we get\footnote{Realizing a decomposition of the form $$q[a_1+b_1,a_2+b_2] = q[a_1,a_2]+q[b_1,a_2+b_2]+q[a_1+b_1,b_2]-q[b_1,b_2].$$} for $\lambda_1,\lambda_2 \in \mathbb{R}$,
\begin{multline}
\label{dec_F_2}
F^2_k =    \sum_{\substack{ D_{k_1}(\omega_1)=0 \\
													\Im \omega_1 \geq \lambda_1	}}  \sum_{\substack{ D_{k_2}(\omega_2)=0 \\
													\Im \omega_2 \geq \lambda_2	}} q[ P_{k_1,\omega_1} e^{-i\omega_1 t}  , P_{k_2,\omega_2} e^{-i\omega_2 t}]   +  
													q[e^{\lambda_1 t} r_{k_1,\lambda_1}, \widehat{\psi}(t,k_2)]				\\+ q[\widehat{\psi}(tk_1),e^{\lambda_2 t} r_{k_2,\lambda_2} ]			
-q[e^{\lambda_1 t} r_{k_1,\lambda_1},e^{\lambda_2 t} r_{k_2,\lambda_2} ]
\end{multline}
where $r_{k_1,\lambda_1}$ and $r_{k_2,\lambda_2}$ are respectively bounded by constants $C_{\lambda_1}$ and $C_{\lambda_2}$.\\


Furthermore, we can also assume that there exists $\lambda_0 \in \mathbb{R}$ and $M>0$ such that
\[ \forall t >0, \  |\widehat{\psi}(t,k_1)| +| \widehat{\psi}(t,k_2) |\leq M e^{\lambda_0 t}.\]

\medskip

Finally, since we are treating the non-resonant case, we may assume that there exists $\delta>0$ such that
\begin{equation}
\label{carct_non_reson}
\forall 0\leq \tau \leq s, \  \delta s \leq  |\tau k + (s-\tau) k_2|  .
\end{equation}

\medskip

So first, we are going to control the remainder terms of \eqref{dec_F_2}. For example, we consider $q[e^{\lambda_1 t} r_{k_1,\lambda_1}, \widehat{\psi}(t,k_2)]$. So, if $t>0$, $\lambda_3<0$, $\lambda_1<0$, we have
\begin{multline*}
e^{-\lambda t} |q[e^{\lambda_1 t} r_{k_1,\lambda_1}, \widehat{\psi}(t,k_2)](t)| \\
\leq C_{\lambda_1}MC_{\lambda_3} e^{(\lambda_1+\lambda_0- \lambda) t}  \int_{0\leq \tau\leq s \leq t} e^{-\lambda_1 \tau - \lambda_0 s} \tau  e^{\lambda_3 |k \tau  + k_2 (s-\tau)|} \textrm{d}s {\rm d\tau} \\
																											\leq C_{\lambda_1}MC_{\lambda_3} e^{(\lambda_1+\lambda_0-\lambda) t}  \int_{0\leq \tau\leq s \leq t} e^{-\lambda_1 \tau - \lambda_0 s + \lambda_3 \delta s} \tau   \textrm{d}s {\rm d\tau} \\
																											\leq C_{\lambda_1}MC_{\lambda_3} t^2 e^{(\lambda_1+\lambda_0-\lambda) t}  \int_{s>0} e^{-\lambda_1 s - \lambda_0 s + \lambda_3 \delta s}  \textrm{d}s.
\end{multline*}
So, this quantity is bounded uniformly with respect to $t>0$ if $\lambda_1<\lambda - \lambda_0$ and $\lambda_3<\frac{\lambda_1 + \lambda_0}{\delta}<\frac{\lambda}{\delta}$.
Similarly, we could prove that if $\lambda_2$ is chosen negative enough then we could control $q[\widehat{\psi}(t,k_1),e^{\lambda_2 t} r_{k_2,\lambda_2} ](t)e^{-\lambda t}$ uniformly with respect to $t$,
and also $q[e^{\lambda_1 t} r_{k_1,\lambda_1},e^{\lambda_2t} r_{k_2,\lambda_2} ]$.

\medskip

$\!$Now, we consider a generic leading terms of \eqref{dec_F_2} of the type $q[ t^{n_1}e^{-i\omega_1 t},t^{n_2}e^{-i\omega_2 t}]$. So first, we can expand it
\begin{multline*}
q[ t^{n_1}e^{-i\omega_1 t}, t^{n_2}e^{-i\omega_2 t}](t) \\
=  \int_{0\leq \tau\leq s \leq t}  (t-\tau)^{n_1} e^{-i\omega_1 (t-\tau)}    (t-s)^{n_2} e^{-i\omega_2 (t-s)}     \tau u(k \tau  + k_2 (s-\tau))  \textrm{d}s {\rm d\tau} \\
= \sum_{j_1=0}^{n_1} \sum_{j_2=0}^{n_2} \big( b_{j_1,j_2} e^{-i (\omega_1+\omega_2) t} t^{n_1-j_1+n_2-j_2}  \\ \times \int_{0\leq \tau\leq s \leq t} \! \! \! \! \! \tau^{j_1+1}  s^{j_2} e^{i\omega_1 \tau + i \omega_2 s}      u(k \tau  + k_2 (s-\tau))  \textrm{d}s {\rm d\tau} \big),
\end{multline*}
where $b\in \mathbb{R}^{ \llbracket 0,n_1\rrbracket \times \llbracket 0,n_2\rrbracket }$ are some real coefficients.

\medskip

We observe that this last integral converge when $t$ goes to $+\infty$. Indeed, we have
\begin{align*}
 \left| \int_0^s  \tau^{j_1+1}  s^{j_2} e^{i\omega_1 \tau + i \omega_2 s}      u(k \tau  + k_2 (s-\tau))  {\rm d\tau} \right| &\leq C_{\lambda_3} s^{j_1+j_2+2} e^{(|\omega_1|+|\omega_2|)s} e^{ \lambda_3 \delta s}\\
 & \in L^1(\mathbb{R}_+), \ \textrm{ if } \delta \lambda_3 < -|\omega_1|-|\omega_2|. 
\end{align*}

\medskip

Consequently, there exists a complex constant $A\in \mathbb{C}$ such that
\begin{multline*}
 \int_{0\leq \tau\leq s \leq t}  \tau^{j_1+1}  s^{j_2} e^{i\omega_1 \tau + i \omega_2 s}      u(k \tau  + k_2 (s-\tau))  \textrm{d}s {\rm d\tau} \\
 = A -  \int_{\substack{ 0\leq \tau \leq s \\
																																t\leq s } } \tau^{j_1+1}  s^{j_2} e^{i\omega_1 \tau + i \omega_2 s}      u(k \tau  + k_2 (s-\tau))  \textrm{d}s {\rm d\tau}. 
\end{multline*}																							
This complex number $A$ generates the term of frequency $\omega_1+\omega_2$ in \eqref{F^2_nonres}. So we just need to prove that the other term is a remainder term controlling it. Indeed, we have
\begin{equation*}
\begin{split}
& e^{-\lambda t} \left| e^{-i (\omega_1+\omega_2) t} t^{n_1-j_1+n_2-j_2} \int_{\substack{ 0\leq \tau \leq s \\
					t\leq s } } \tau^{j_1+1}  s^{j_2} e^{i\omega_1 \tau + i \omega_2 s}      u(k \tau  + k_2 (s-\tau))  \textrm{d}s {\rm d\tau} \right| \\
\leq& C_{\lambda_3} e^{-\lambda t} e^{\Im (\omega_1+\omega_2) t} t^{n_1-j_1+n_2-j_2} \int_{\substack{ 0\leq \tau \leq s \\
					t\leq s } }  s^{j_1+j_2+1} e^{- \Im \omega_1 \tau - \Im \omega_2 s}   e^{\lambda_3 \delta s}   \textrm{d}s {\rm d\tau} \\
\leq & C_{\lambda_3} e^{-\lambda t} e^{\Im (\omega_1+\omega_2) t} t^{n_1-j_1+n_2-j_2} \int_{t \leq s }  s^{j_1+j_2+2} e^{|\Im \omega_1| s - \Im \omega_2 s}   e^{\lambda_3 \delta s}   \textrm{d}s\\
\leq & C_{\lambda_3}  \int_{s>0}  e^{ |\Im(\omega_1+\omega_2)|s-\lambda s} s^{n_1+2+n_2}  e^{ |\Im \omega_1| s - \Im \omega_2 s}   e^{\lambda_3 \delta s}   \textrm{d}s,
\end{split}
\end{equation*}
as $\lambda$ can be supposed $\le 0$, and this last quantity is finite if $\lambda_3$ is negative enough ($\lambda_3 < \frac{\lambda-|\Im(\omega_1+\omega_2)| -  |\Im \omega_1| +  \Im \omega_2}{\delta}$).

Concerning the degree, we see that it is $\le n_{k_1,\omega_1}-1+n_{k_2,\omega_2}-1$, since $n_1\le n_{k_1,\omega_1}-1$ and  $n_2\le n_{k_2,\omega_2}-1$, which corresponds to what is expected.

\subsection{Proof of Lemma \ref{lem_F2_is_cool} in the resonant case}
We consider now the last case, which is the most complex. We keep the notations of the previous subsection but we need a new expression of $q$ adapted to the resonance:
\begin{align*}
  q[\phi_1,\phi_2](t) &=  \int_{0\leq \tau\leq s \leq t}  \phi_1(t-\tau)   \phi_2(t-s)   \tau u(k_1 \left[ (1-\gamma) \tau  -\gamma (s-\tau) \right] )  \textrm{d}s {\rm d\tau},\\
  							&= \int_0^t \int_0^s  \phi_1(t-\tau)   \phi_2(t-s)   \tau u(k_1 \left[ \tau - \gamma s \right] ) {\rm d\tau} \textrm{d}s, \\
  							&= \int_0^t  \int_{-\gamma s}^{(1-\gamma) s}  \phi_1(t-\tau - \gamma s)   \phi_2(t-s) (\tau+\gamma s)  u(k_1 \tau) {\rm d\tau} \textrm{d}s.
\end{align*}
The term $\tau+\gamma s$ is quite heavy for our estimations, so we introduce a last notation
\[ q_{l}^m[\phi_1,\phi_2](t) =   \int_0^t  \int_{-\gamma s}^{(1-\gamma) s}  \phi_1(t-\tau - \gamma s)   \phi_2(t-s) \tau^l s^{m} u(k_1 \tau) {\rm d\tau} \textrm{d}s. \]
Consequently, we can expand $q[\phi_1,\phi_2]$ as follow
\[  q[\phi_1,\phi_2] = q_{1}^0[\phi_1,\phi_2] + \gamma q_{0}^1[\phi_1,\phi_2].  \]
We also introduce\footnote{Realizing a decomposition of the form: $q[a_1+b_1,a_2+b_2] = q[a_1,a_2]+q[a_1,b_2]+q[b_1,a_2+b_2].$} a new expansion of $F^2_k$ more adapted to the resonance
\begin{multline*}
F^2_k =     \sum_{\substack{ D_{k_1}(\omega_1)=0 \\
													\Im \omega_1 \geq \lambda_1	}}  \sum_{\substack{ D_{k_2}(\omega_2)=0 \\
													\Im \omega_2 \geq \lambda_2	}} q[ P_{k_1,\omega_1} e^{-i\omega_1 t}  , P_{k_2,\omega_2} e^{-i\omega_2 t}] \\
													+  \sum_{\substack{ D_{k_1}(\omega_1)=0 \\
													\Im \omega_1 \geq \lambda_1	}} q[ P_{k_1,\omega_1} e^{-i\omega_1 t}, e^{\lambda_2 t} r_{k_2,\lambda_2}] 
													+ q[e^{\lambda_1 t}r_{k_1,\lambda_1},\widehat{\psi}(t,k_2)].
\end{multline*}


Now, we are going to study each one of the terms of this expansion.

\paragraph{{\bf Last term}}
$ $

First, we control the last remainder term, $q[e^{\lambda_1 t}r_{k_1,\lambda_1},\widehat{\psi}(t,k_2)]$. Indeed, if $t>0$ we have
\begin{equation*}
\begin{split}
&e^{-\lambda t}|q_l^m[e^{\lambda_1 t}r_{k_1,\lambda_1},\widehat{\psi}(t,k_2)](z)|\\
\leq& C_{\lambda_1} M C_{\lambda_3} e^{-\lambda t} t^{l+m} \int_0^t  \int_{-\gamma s}^{(1-\gamma) s}  e^{\lambda_1[t-\tau - \gamma s]}   e^{\lambda_0(t-s)} e^{\lambda_3 |k_1| |\tau|} {\rm d\tau} \textrm{d}s \\
\leq& C_{\lambda_1} M C_{\lambda_3} \left(  \int_{\mathbb{R} }e^{-\lambda_1 \tau + \lambda_3 |k_1| |\tau|}   {\rm d\tau}  \right) t^{l+m} e^{ (-\lambda+\lambda_0 +\lambda_1)t} \left( \int_{0}^t  e^{ -\gamma s\lambda_1 -\lambda_0 s }   \textrm{d}s \right)\\
\leq& C_{\lambda_1} M C_{\lambda_3} \left(  \int_{\mathbb{R} }e^{-\lambda_1 \tau + \lambda_3 |k_1| |\tau|}   {\rm d\tau}  \right) t^{l+m+1} e^{ (-\lambda+\lambda_0 +\lambda_1)t}e^{ -\gamma t\lambda_1 -\lambda_0 t } \\
\leq& C_{\lambda_1} M C_{\lambda_3} \left(  \int_{\mathbb{R} }e^{\left(-\lambda_1 + \lambda_3|k_1|\right)   |\tau|}   {\rm d\tau}  \right) t^{l+m+1} e^{ (-\lambda+\lambda_1(1-\gamma))t}
\end{split}
\end{equation*}
So this last quantity is bounded uniformly with respect to $t$ if $\lambda_1$ and $\lambda_3$ are chosen negative enough. More precisely, we need $(1-\gamma)\lambda_1 < \lambda $ 
and $\lambda_3  |k_1| < \lambda_1$.

\paragraph{{\bf Second term}}
$ $

Now, we study the behavior of the second kind of term in the expansion of $F^2_k$. 

Expanding $P_{k_1,\omega_1} (t-\tau - \gamma s)$, we can write $q[ P_{k_1,\omega_1} e^{-i\omega_1 t}, e^{\lambda_2 t} r_{k_2,\lambda_2}](t)$ as a linear combination of term of the type  
$$t^{j} q_{l+1}^m[ e^{-i\omega_1 t}, e^{\lambda_2 t} r_{k_2,\lambda_2}](t)\ \textrm{and}\  t^{j} q_{l}^{m+1}[ e^{-i\omega_1 t}, e^{\lambda_2 t} r_{k_2,\lambda_2}](t),$$ with $j+l+m \leq \deg P_{k_1,\omega_1}$.

Let $t>0$, then we have
\begin{multline*}q_{l}^m[ e^{-i\omega_1 t}, e^{\lambda_2 t} r_{k_2,\lambda_2}](t) \\
= e^{-i\omega_1 t}  \int_0^t  \int_{-\gamma s}^{(1-\gamma) s}  e^{i\omega_1\tau} e^{i\omega_1\gamma s}    e^{\lambda_2 (t-s)} r_{k_2,\lambda_2}(t-s) \tau^l s^{m} u(k_1 \tau) {\rm d\tau} \textrm{d}s.
\end{multline*}
So, using \eqref{lambda_3_control},we introduce 
\[ \mathfrak{R}_{-}(s) =  \int_{-\infty}^{-\gamma s} e^{i\omega_1\tau} \tau^l u(k_1 \tau) {\rm d\tau} \textrm{ and }\mathfrak{R}_{+}(s) =  \int_{(1-\gamma)s}^{\infty} e^{i\omega_1\tau} \tau^l u(k_1 \tau) {\rm d\tau} \]
and 
\begin{equation}
A =  \int_{\mathbb{R}} e^{i\omega_1\tau} \tau^l u(k_1 \tau) {\rm d\tau} \textrm{ and } B_{\lambda_2,p} =  \int_0^{\infty} e^{-i\omega_1\gamma s}  s^p  e^{\lambda_2 s} r_{k_2,\lambda_2}(s) \textrm{d}s,
\label{Blam2p}
\end{equation}
where $B_{\lambda_2,p}$ is well defined if $\lambda_2$ is negative enough (i.e. $\lambda_2 < - \gamma |\lambda_0|$).
Consequently, we get (since $1-\gamma=\frac{|k|}{|k_1|}$)
\begin{equation*}
\begin{split}
&q_{l}^m[ e^{-i\omega_1 t}, e^{\lambda_2 t} r_{k_2,\lambda_2}](t) \\
=& A  e^{-i\omega_1 t}  \int_0^t e^{i\omega_1\gamma s}  s^m  e^{\lambda_2 (t-s)} r_{k_2,\lambda_2}(t-s) \textrm{d}s \\ 
&+ \int_0^t e^{i\omega_1\gamma s}    e^{\lambda_2 (t-s)} s^m r_{k_2,\lambda_2}(t-s) \left(  \mathfrak{R}_{-}(s)+ \mathfrak{R}_{+}(s)   \right) \textrm{d}s \\
=&A  e^{-i\omega_1 \frac{|k|}{|k_1|} t}  \int_0^t e^{-i\omega_1\gamma s}  (t-s)^m  e^{\lambda_2 s} r_{k_2,\lambda_2}(s) \textrm{d}s \\ 
&+ \int_0^t e^{i\omega_1\gamma s}    e^{\lambda_2 (t-s)} s^m r_{k_2,\lambda_2}(t-s)  \left(  \mathfrak{R}_{-}(s)+ \mathfrak{R}_{+}(s)   \right) \textrm{d}s\\
=& \sum_{p=0}^m C_m^p t^{m-p} AB_{\lambda_2,p} e^{-i\omega_1 \frac{|k|}{|k_1|} t}  \\
&+\sum_{p=0}^m C_m^p t^{m-p} A e^{-i\omega_1 \frac{|k|}{|k_1|} t}  \int_t^\infty e^{-i\omega_1\gamma s} s^p   e^{\lambda_2 s} r_{k_2,\lambda_2}(s) \textrm{d}s \\
&+ \int_0^t e^{i\omega_1\gamma s}    e^{\lambda_2 (t-s)}s^m r_{k_2,\lambda_2}(t-s)  \left(  \mathfrak{R}_{-}(s)+ \mathfrak{R}_{+}(s)   \right) \textrm{d}s,
\end{split}
\end{equation*}
where $C_m^p={{m}\choose{p}}$ is a binomial coefficient. Here there are three kinds of terms. The first one is one of expected leading term. The two others are remainder terms. So we have to control them.

\medskip

First, we control the second kind of term. If $t>0$ then
\begin{multline*}
 \left| e^{-\lambda t} e^{-i\omega_1 \frac{|k|}{|k_1|} t}  \int_t^\infty e^{-i\omega_1\gamma s} s^p   e^{\lambda_2 s} r_{k_2,\lambda_2}(s) \textrm{d}s \right| \\
\leq C_{\lambda_2} e^{-\lambda t} e^{\Im \omega_1 \frac{|k|}{|k_1|} t}  \int_t^\infty e^{\Im \omega_1\gamma s} s^p   e^{\lambda_2 s}  \textrm{d}s \\
\leq C_{\lambda_2} \int_{s>0} s^p e^{\left[ |\Im \omega_1| + |\lambda| + \lambda_2  \right]s} s^p   e^{\lambda_2 s}    \textrm{d}s.
\end{multline*}
So this last quantity is finite if $\lambda_2$ is negative enough.

\medskip

Then we control the last kind of term. If $t>0$ then
\begin{multline*}
\left| e^{-\lambda t}  \int_0^t e^{i\omega_1\gamma s}    e^{\lambda_2 (t-s)}s^m r_{k_2,\lambda_2}(t-s)   \mathfrak{R}_{-}(s) \textrm{d}s \right| \\
\leq C_{\lambda_2}C_{\lambda_3} e^{-\lambda t}  \int_0^t e^{ - \Im \omega_1 \gamma s}   e^{\lambda_2 (t-s)}s^m  \int_{\gamma s}^{\infty} e^{-\Im \omega_1\tau} \tau^l e^{\lambda_3 |k_1| \tau} {\rm d\tau} \textrm{d}s \\
\leq C_{\lambda_2}C_{\lambda_3} t^m e^{(\lambda_2-\lambda + |\Im \omega_1|\gamma) t} \int_{\tau >0}  \tau^l e^{\left( \lambda_3 |k_1| -\Im \omega_1 + \frac{|\lambda_2|}{\gamma} \right) \tau} {\rm d\tau}.
\end{multline*}
So this last quantity is bounded uniformly with respect to $t$ if $\lambda_2<\lambda - |\Im \omega_1|\gamma$ and $ \lambda_3 |k_1| < \Im \omega_1 )- \frac{|\lambda_2|}{\gamma} $. Of course, we could control the other remainder term (with $\mathfrak{R}_{+}$) in a similar way.

Concerning the degree, it is smaller or equal than the degree of $P_{k_1,\omega_1}$, that is $\le n_{k_1,\omega_1}-1$, as $j+m\le \deg P_{k_1,\omega_1}$. This is for the moment one degree less than what is expected in the Lemma \ref{lem_F2_is_cool}.

\begin{remark}
Note the term $B_{\lambda_2,p}$ in \eqref{Blam2p} is not explicit, as it relies on a remainder term of the first order dispersion relation. It is worth mentioning that this term contributes to the second order expansion, and not as a remainder term. 
\end{remark}

\paragraph{{\bf First term}}
$ $

Finally we study the first kind of terms in the expansion of $F^2_k$. These terms are of the type $q[ P_{k_1,\omega_1} e^{-i\omega_1 t}  , P_{k_2,\omega_2} e^{-i\omega_2 t}]$. By a straightforward calculation, as in the previous case, it can be extended as a linear combination of terms of the type $t^j q_{l+1}^m[ e^{-i\omega_1 t}  ,  t^{n} e^{-i\omega_2 t}]$ and $t^j q_{l}^{m+1}[ e^{-i\omega_1 t}  , t^n e^{-i\omega_2 t}]$ with $j+l+m=\deg P_{k_1,\omega_1}$ and $n\leq \deg P_{k_2,\omega_2}$.

\medskip

In order to pursue the proof for this first kind of terms, in the following elementary lemma, we introduce a useful algebraic decomposition. It is proven in Appendix \ref{Appendix2}.
\begin{lemma}
\label{lem_algb_dec}
 For all $n,m\in \mathbb{N}$, for all $\omega \in \mathbb{C}$, there exists $Q_{m,n,\omega},R_{m,n,\omega} \in \mathbb{C}[X]$ such that
\[ \forall t>0, \ \int_0^t e^{i\omega s} s^m (t-s)^n \textrm{d}s = Q_{m,n,\omega}(t) e^{i\omega t} + R_{m,n,\omega}(t)  .\]
If $\omega\neq 0$ then $\deg Q_{m,n,\omega} = m$ and $\deg R_{m,n,\omega} = n$. If $\omega = 0$ then $Q_{m,n,\omega}=0$ and $\deg R_{m,n,\omega} = m+n+1$. 
\end{lemma}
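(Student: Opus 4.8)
The plan is to prove Lemma~\ref{lem_algb_dec} by a direct computation based on integration by parts, treating the cases $\omega\neq 0$ and $\omega=0$ separately since they have genuinely different structure.

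\textbf{Case $\omega=0$.} Here the statement is elementary: $\int_0^t s^m(t-s)^n\,\mathrm ds$ is, after expanding $(t-s)^n$ by the binomial theorem, a sum of terms $t^{n-j}\int_0^t s^{m+j}\,\mathrm ds = t^{n-j}\,t^{m+j+1}/(m+j+1)$, each of which is a monomial of degree $m+n+1$ in $t$. The top-degree coefficient is $\sum_{j=0}^n \binom nj \frac{(-1)^j}{m+j+1}$, which is the Beta integral $\int_0^1 u^m(1-u)^n\,\mathrm du = \frac{m!\,n!}{(m+n+1)!}\neq 0$, so $\deg R_{m,n,0}=m+n+1$ exactly, and of course $Q_{m,n,0}=0$.

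\textbf{Case $\omega\neq 0$.} The plan is to induct on $n$. For $n=0$, repeated integration by parts of $\int_0^t e^{i\omega s}s^m\,\mathrm ds$ produces $Q_{m,0,\omega}(t)e^{i\omega t}+R_{m,0,\omega}(t)$ with $Q_{m,0,\omega}(t)=\sum_{j=0}^m \frac{(-1)^j m!}{(m-j)!(i\omega)^{j+1}}t^{m-j}$, a polynomial of degree exactly $m$ (leading coefficient $\frac1{i\omega}\neq 0$), and $R_{m,0,\omega}$ the constant $-Q_{m,0,\omega}(0)=-\frac{(-1)^m m!}{(i\omega)^{m+1}}$, which has degree $0=n$ provided $m!\neq0$ — always true. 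For the inductive step, write $(t-s)^n = (t-s)(t-s)^{n-1}$ and use linearity together with the trivial identity $\int_0^t e^{i\omega s}s^m(t-s)^n\,\mathrm ds = t\int_0^t e^{i\omega s}s^m(t-s)^{n-1}\,\mathrm ds - \int_0^t e^{i\omega s}s^{m+1}(t-s)^{n-1}\,\mathrm ds$; alternatively, and more cleanly, differentiate both sides with respect to $t$: since $\frac{\mathrm d}{\mathrm dt}\int_0^t e^{i\omega s}s^m(t-s)^n\,\mathrm ds = n\int_0^t e^{i\omega s}s^m(t-s)^{n-1}\,\mathrm ds$ for $n\geq1$ (the boundary term vanishes), one gets the recursion $Q_{m,n,\omega}' = n\,Q_{m,n-1,\omega}$ and $R_{m,n,\omega}' = n\,R_{m,n-1,\omega}$, together with the matching condition at $t=0$, namely $Q_{m,n,\omega}(0)+R_{m,n,\omega}(0)=0$. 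Since $R_{m,n-1,\omega}$ has degree $n-1$ with nonzero leading coefficient by the inductive hypothesis, its antiderivative $R_{m,n,\omega}$ has degree exactly $n$; and since $Q_{m,n-1,\omega}$ has degree $m$, its antiderivative $Q_{m,n,\omega}$ has degree $m+1$ unless the antiderivative is chosen with... — here one must be careful: $Q_{m,n,\omega}$ is only determined up to an additive constant by the ODE, and that constant is fixed by the matching condition, which adjusts $R_{m,n,\omega}(0)$ instead. So in fact $\deg Q_{m,n,\omega}=m+1$ from this argument, which contradicts the claim $\deg Q_{m,n,\omega}=m$; the resolution is that the correct recursion must come from integrating by parts \emph{in the $e^{i\omega s}$ factor}, not differentiating in $t$, so that each integration by parts lowers the power of $(t-s)$ while keeping the $e^{i\omega t}$-coefficient's degree controlled by $m$.

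\textbf{The main obstacle} — and the step I would spend the most care on — is precisely pinning down that $\deg Q_{m,n,\omega}=m$ (not $m+n$). The clean way is to integrate by parts $n$ times, each time differentiating the polynomial factor $s^m(t-s)^n$ and integrating $e^{i\omega s}$: after the first step, $\int_0^t e^{i\omega s}s^m(t-s)^n\,\mathrm ds = \frac1{i\omega}\big[e^{i\omega s}s^m(t-s)^n\big]_0^t - \frac1{i\omega}\int_0^t e^{i\omega s}\frac{\mathrm d}{\mathrm ds}\!\big(s^m(t-s)^n\big)\,\mathrm ds$; the boundary term vanishes at both endpoints when $m,n\geq1$, and $\frac{\mathrm d}{\mathrm ds}(s^m(t-s)^n)$ is a combination of $s^{m-1}(t-s)^n$ and $s^m(t-s)^{n-1}$. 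Iterating, one sees by a bookkeeping argument that every time the $(t-s)$ power hits zero we produce a boundary term $\frac1{(i\omega)^{\cdot}}e^{i\omega t}\cdot(\text{polynomial in }t\text{ of degree}\leq m)$, whereas the $s$-power hitting zero produces pure polynomial (non-exponential) contributions; the $s=0$ endpoint only ever contributes when the $s$-power is exhausted, giving the polynomial $R_{m,n,\omega}$ of degree $n$. Once this accounting is set up, both degree claims and the nonvanishing of the leading coefficients fall out, and one cites the $\omega=0$ Beta-integral computation above for the remaining case. I would present the IBP bookkeeping as the technical heart and relegate the rest to "straightforward."
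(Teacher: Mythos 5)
Your final approach --- repeated integration by parts in the exponential factor, tracking which endpoint contributes at each step --- is sound and does yield the lemma, but it is a genuinely different route from the paper's. The paper recognizes the integral as the convolution of $t\mapsto t^m e^{i\omega t}$ with $t\mapsto t^n$, so its Laplace transform is the rational function $\dfrac{\text{const}}{(z+\omega)^{m+1}z^{n+1}}$; the partial fraction decomposition $\sum_{j\le n} a_j/z^{j+1} + \sum_{j\le m} b_j/(z+\omega)^{j+1}$ then exhibits $R_{m,n,\omega}$ and $Q_{m,n,\omega}$ directly by inverting $\Lap$ term by term via \eqref{intro_art_pas_fou}, and the degree claims amount to the nonvanishing of the top coefficients $a_n$ and $b_m$, which are just evaluations at $z=0$ and $z=-\omega$ of the remaining factor. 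That argument is shorter and avoids all the endpoint bookkeeping, at the small cost of invoking injectivity of $\Lap$ (already used repeatedly in the paper). Your IBP argument is more elementary and self-contained but needs a bit more care to close; in particular the first boundary term does \emph{not} vanish when $m=0$ or $n=0$, so the bookkeeping should start from the general identity $\int_0^t e^{i\omega s}u(s)\,\mathrm ds=\sum_{j=0}^{\deg u}\tfrac{(-1)^j}{(i\omega)^{j+1}}\big[e^{i\omega s}u^{(j)}(s)\big]_0^t$ with $u(s)=s^m(t-s)^n$, using that $u^{(j)}(0)=0$ for $j<m$ and $u^{(j)}(t)=0$ for $j<n$.

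One thing worth flagging in your intermediate attempt: the $t$-differentiation route does not actually fail for the reason you state, and the pivot to IBP was unnecessary (though also fine). Differentiating the ansatz $Q_{m,n,\omega}(t)e^{i\omega t}+R_{m,n,\omega}(t)$ in $t$ gives $\big(Q_{m,n,\omega}'+i\omega\,Q_{m,n,\omega}\big)e^{i\omega t}+R_{m,n,\omega}'$; the term $i\omega\,Q_{m,n,\omega}$ was dropped, which is what produced the spurious recursion $Q_{m,n,\omega}'=n\,Q_{m,n-1,\omega}$ and the apparent degree jump to $m+1$. With the correct recursion $Q_{m,n,\omega}'+i\omega\,Q_{m,n,\omega}=n\,Q_{m,n-1,\omega}$ and $\omega\neq 0$, the degree of $Q_{m,n,\omega}$ is forced to equal that of $Q_{m,n-1,\omega}$, since $Q'$ has strictly lower degree than $i\omega Q$; so the induction closes cleanly.
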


\begin{remark}
The fact that the degree of $R_{m,n,\omega}$ can change contains the discussion on the multiplicity. 
Indeed, it will be applied for $\omega=\gamma \omega_1+\omega_2$ which is equal to zero when $\omega_1+\omega_2= \frac{|k|}{|k_1|}\omega_1$, since $ \frac{|k|}{|k_1|}= \frac{|k_1+k_2|}{|k_1|}=(1-\gamma)$.
\end{remark}

\medskip

Furthermore, using the previous constructions, we introduce
\[ B(t) = \int_{s>0} e^{i\omega_1 \gamma s}  e^{i\omega_2 s} (t-s)^n s^m \left(  \mathfrak{R}_{-}(s)+ \mathfrak{R}_{+}(s)   \right)\textrm{d}s \in \mathbb{C}_n[t]. \]

\medskip

Now, if $t>0$, we have
\begin{equation*}
\begin{split}
&q_{l}^m[ e^{-i\omega_1 t}  , t^n e^{-i\omega_2 t}](t) \\
=&  \int_0^t  \int_{-\gamma s}^{(1-\gamma) s}  e^{-i\omega_1 (t-\tau - \gamma s)}  e^{-i\omega_2 (t-s)} (t-s)^n \tau^l s^{m} u(k_1 \tau) {\rm d\tau} \textrm{d}s \\
=& A  \int_0^t e^{-i\omega_1 (t - \gamma s)}  e^{-i\omega_2 (t-s)} (t-s)^n s^m \textrm{d}s \\
&+  \int_0^t e^{-i\omega_1 (t - \gamma s)}  e^{-i\omega_2 (t-s)} (t-s)^n s^m  \left(  \mathfrak{R}_{-}(s)+ \mathfrak{R}_{+}(s)   \right)\textrm{d}s\\
=& A  e^{-i (\omega_1+\omega_2)t } \left[ Q_{m,n,\gamma \omega_1 + \omega_2}(t) e^{ i (\gamma \omega_1 + \omega_2) t } + R_{m,n,\gamma \omega_1 + \omega_2}(t) \right] \\
&+  B(t) e^{-i (\omega_1+\omega_2)t } -  \int_t^\infty e^{-i\omega_1 (t - \gamma s)}  e^{-i\omega_2 (t-s)} s^m \left(  \mathfrak{R}_{-}(s)+ \mathfrak{R}_{+}(s)   \right)\textrm{d}s \\
=&  (A R_{m,n,\gamma \omega_1 + \omega_2} + B(t))  e^{-i (\omega_1+\omega_2)t } + A  Q_{m,n,\gamma \omega_1 + \omega_2}(t) e^{-i\omega_1 \frac{|k|}{|k_1|} t} \\
&-  \int_t^\infty e^{-i\omega_1 (t - \gamma s)}  e^{-i\omega_2 (t-s)} (t-s)^n s^m \left(  \mathfrak{R}_{-}(s)+ \mathfrak{R}_{+}(s)   \right)\textrm{d}s.
\end{split}
\end{equation*}

\medskip

Finally we just have to prove that this last integral is a remainder term. Indeed, we have
\begin{equation*}
\begin{split}
 & \left| e^{-\lambda t} \int_t^\infty e^{-i\omega_1 (t - \gamma s)}  e^{-i\omega_2 (t-s)} (t-s)^n s^m \mathfrak{R}_{-}(s) \textrm{d}s \right| \\
 \leq& C_{\lambda_3} t^n e^{(-\lambda + \Im \omega_1 + \Im \omega_2 )  t} \int_t^\infty e^{-(\gamma \Im \omega_1 + \Im \omega_2)s} s^m \int_{\gamma s}^{\infty}  e^{\Im \omega_1\tau} \tau^l e^{\lambda_3 |k_1| \tau}  {\rm d\tau}  \textrm{d}s \\
 \leq& C_{\lambda_3} \int_t^\infty e^{-\gamma s} \int_{\gamma s}^{\infty}  \frac{\tau^{n+l+m}}{\gamma^{n+m}} e^{(1+ \Im \omega_1+\frac{|-\lambda + \Im \omega_1 + \Im \omega_2|+ |\gamma \Im \omega_1 + \Im \omega_2|}{\gamma}+\lambda_3 |k_1|) \tau}  {\rm d\tau} \textrm{d}s \\
 \leq &C_{\lambda_3}  \int_{s>0} e^{-\gamma s} \textrm{d}s \int_{\tau>0}   \frac{\tau^{n+l+m}}{\gamma^{n+m}} e^{(1+ \Im \omega_1+\frac{|-\lambda + \Im \omega_1 + \Im \omega_2|+ |\gamma \Im \omega_1 + \Im \omega_2|}{\gamma}+\lambda_3 |k_1|) \tau}  {\rm d\tau} .
 \end{split}
\end{equation*}
So this last quantity is finite if $\lambda_3$ is negative enough.

Concerning the degree, we consider first the case $\gamma\omega_1+\omega_2\not=0$. As $B$ is of degree $\le n$ and $R_{m,n,\gamma\omega_1+\omega_2}$ is of degree $\le n$. So we get, as $j\le \deg P_{k_1,\omega_1}$
and $n\le \deg P_{k_2,\omega_2}$, that $Q^{k_1,k_2}_{\omega_1,\omega_2}$ is of degree $\le n_{k_1,\omega_1}-1+n_{k_2,\omega_2}-1$, which is the expected value. 
Now, as we can have a $q_l^{m+1}$ term, leading to $Q_{m+1,n,\gamma\omega_1+\omega_2}$ which is of degree $\le m+1$ and as $m$ can be chosen $\le \deg P_{k_1,\omega_1}$,
$R_{k_1,k_2}^{\omega_1}$ is of degree $\le n_{k_1,\omega_1}$, which is now the expected value.
We consider finally the case $\gamma\omega_1+\omega_2=0$, so that the terms $e^{-i(\omega_1+\omega_2)t}$ and $e^{-i\omega_1 \frac{|k|}{|k_1|} t}$ are the same. The terms of highest degree is then
$R_{m+1,n,\gamma \omega_1 + \omega_2}$ which is here of degree $\le m+n+2$, that is $\le n_{k_1,\omega_1}-1+n_{k_2,\omega_2}-1+2$. All the values found are thus those that are expected.



\subsection{\bf Proof of Proposition \ref{resol_rel_disp_o2}} 

\begin{proof}[Proof of Proposition \ref{resol_rel_disp_o2}]
In Proposition \ref{Dk_cool} and \ref{lem_belle_N_is_nice} we have proven that we can apply Lemma \ref{lemma_key} with $1-R(z)=D_k(z+i\widetilde{\lambda})$ and $N(z)=\mathcal{N}^1_{k}(z+i\widetilde{\lambda}) +  \mathcal{N}^2_{k}(z+i\widetilde{\lambda})$,
taking $\lambda_0=\widetilde{\lambda}/|k|$ in Proposition \ref{Dk_cool}: we get from Proposition  \ref{Dk_cool} and Lemma \ref{lem_belle_N_is_nice} 
$$
\forall z\in i\Sigma_{\gamma+\frac{\pi}{2}},\  |zR(z)|\le \frac{C}{|k|},\ \sup_{z \in i\Sigma_{\beta+\frac{\pi}2}} |zN(z)|<\infty. 
$$
 But the result of this lemma is that for all $\lambda\in \mathbb{R}$, we have
 \[ \frac{N(z)}{1-R(z)} = \Lap \bigg[   \sum_{\substack{  \omega \textrm{ pole of } \frac{N}{1-R} \\ 
													\Im \omega \geq \lambda	}} P_{\omega}(t) e^{-i \omega t} + e^{\lambda t} r(t) \bigg](z)      ,\]
with a function $r  \in \mathcal{H}(\Sigma_{\tilde{\beta}})$ analytic and bounded on $\Sigma_{\tilde{\beta}}$, for $\Im z$ large enough, with some $\tilde{\beta}$ satisfying $0<\tilde{\beta}<\gamma<\beta$ and $P_\omega$ is the polynomial such that
$$
   \frac{N(z)}{1-R(z)} \mathop{=}_{z\to \omega}  \Lap[P_\omega(t) e^{-i\omega t}] + \mathcal{O}(1).
$$

Thus, we have
\begin{align*}
\frac{\mathcal{N}^1_{k}(z)+\mathcal{N}^2_{k}(z)}{D_k(z)} &= \Lap \bigg[   \sum_{\substack{  \omega \textrm{ pole of } \frac{N}{1-R} \\ 
													\Im \omega \geq \lambda	}} P_{\omega}(t) e^{-i \omega t} + e^{\lambda t} r(t) \bigg](z-i\widetilde{\lambda})\\
&=\Lap \bigg[   \sum_{\substack{  \omega \textrm{ pole of } \frac{N}{1-R} \\ 
													\Im \omega \geq \lambda	}} P_{\omega}(t) e^{-i (\omega+i\widetilde{\lambda}) t} + e^{(\lambda+\widetilde{\lambda})t} r(t) \bigg](z)\\
&=\Lap \bigg[   \sum_{\substack{  \omega \textrm{ pole of } \frac{\mathcal{N}_k^1+\mathcal{N}_k^2}{D_k} \\ 
													\Im (\omega) \geq \lambda+\widetilde{\lambda}	}} P_{\omega-i\widetilde{\lambda}}(t) e^{-i \omega t} + e^{(\lambda+\widetilde{\lambda})t} r(t)\bigg](z)
\end{align*}

So, defining $\mu$ by
$$\widehat{\mu}(t,k)	= \sum_{\substack{ D(\omega)=1 \\
													\Im (\omega) \geq \lambda+\widetilde{\lambda}	}} P_{\omega-i\widetilde{\lambda}}(t) e^{-i \omega t} + e^{(\lambda+\widetilde{\lambda})t} r(t),
$$ we get \eqref{rel_disp_o2_nice}, which is \eqref{rel_disp_o2}. We finally have the expansion of Theorem \ref{The_th}.
Concerning the multiplicity, if one pole is common to $\mathcal{N}_k^1+\mathcal{N}_k^2$ and $D_k^{-1}$ we have to sum up the multiplicity, leading to add
$n_{k,\omega_1+\omega_2}-1$ to the range for $\ell$ and $n_{k,\frac{|k|}{|k_1|}\omega_1}-1$ to the range for $p$. The other concerns about the multiplicity follow from Lemmae \ref{lem_F1_is_cool} and \ref{lem_F2_is_cool}, and the condition $k\cdot k_1\not=0$ directly follows from the factor $k\cdot k_1$ in front of \eqref{defNk1} and \eqref{defNk2}. Note also that $\mathbb{R}_+^* \subset \Sigma_{\widetilde{\beta}}$, so that $r$ is bounded on $\mathbb{R}_+^*$
as stated in Theorem \ref{The_th}.
\end{proof}
 
\section{Numerical results}  \label{sec:5} Simulations have already been performed for multi-species and multi-dimensional simulations in \cite{us}, highlighting the relevance of second order expansion. We focus here more specifically on exhibiting a case where the Best frequency, that corresponds to the terms $B$ in Theorem \ref{The_th}, appears. 

\subsection{First example} We consider the one dimensional case ($d=1$ and $L_1=2\pi$) and solve numerically \eqref{VP_eq} with a Semi-Lagrangian scheme and an adapted $6$-th order splitting \cite{Casas2017}. 
$1D$ periodic centered Lagrange interpolation of degree $17$ is used in both $x$ and $v$ directions and the periodic Poisson solver is solved with fast Fourier transform.

Initial condition is
$
f_0(x,v) = f^{eq}(v) + \varepsilon g_0(x,v),$ with
$$
 \ f^{eq}(v) = e^{-v^2/2},\ g_0(x,v) = \cos(2x)e^{-v^2/(2\sigma_2^2)}+\cos(3x)e^{-v^2/(2\sigma_3^2)}
$$ and 
$\sigma_2 = 2^{1/4},\ \sigma_3 =\sqrt{\pi}/2$ and $\varepsilon=0.001$. 

We take $v\in [-v_{\max},v_{\max}]$, with $v_{\max}=10$. Numerical parameters are: the number of uniform cells in $x$ (resp. $v$) that are $N_x$ (resp. $N_v$) and the time step $\Delta t\in \mathbb{R}_+^*$,
leading to a grid which will be referred as $N_x\times N_v\times \Delta t$ grid.

The first Fourier mode $\widehat{E}_{1,num}(t)$ of the electric field $E:=-\nabla \Phi$ is computed from the simulation at each time step $t=t_n=n\Delta t$, using a discrete Fourier transform. 

We first compute the zeros of $D_k=D_{-k}$ (see Remark \ref{remDk}), for $|k|=1,2,3$ with greatest imaginary part that are
\begin{align*}
\omega_{ 1,\pm} &\simeq \pm2.511728081 -0.4796966410i,\\
 \omega_{ 2,\pm} &\simeq \pm3.734976684 -2.087460944i,\\
 \omega_{ 3,\pm} &\simeq \pm4.866872949 -4.113005968i.
\end{align*}
The second frequency of the  mode $1$ is $\omega_{ 1,\pm}^{(2)} \simeq  \pm3.498058625 -2.374303389i$.
Such zeros can be computing with a symbolic calculus software. An example using Maple is provided in the Appendix.
Here the modes that are initialized are $k_1,k_2\in\{\pm2,\pm3\}$. The main term is for $k=k_1+k_2=\pm1$, with $k_1=\mp2$ and $k_2=\pm3$, as $\omega_{\pm1}$ has the greatest imaginary part among the $\omega_{k_1+k_2}$, with
$k_1,k_2\in \{\pm2,\pm3\}$.
For having $k_2=-\gamma k_1$, with $\gamma\in (0,1)$, we have to take $k_2=\pm2$ and $k_1=\mp3$, so that the Best frequencies
$\omega_{b,\pm}$ of greatest imaginary part are defined by
$$
\omega_{b,\pm}=\frac{|k_1+k_2|}{|k_1|}\omega_{3,\pm} = \frac{\omega_{3,\pm}}{3}.
$$
In order to see such term, we have to remove the main part coming from $\omega_{\pm1}$. The procedure is detailed as follows.
From Theorem \ref{The_th},  we look here for
$$
 \Re(\widehat{E}_{1,num})(t) \simeq  \Re\left(ze^{-i\omega_1t}+(z_1+tz_2)e^{-i\omega_bt}\right), \textrm{with}\ z,z_1,z_2\in \mathbb{C},
$$
with $\omega_1=\omega_{1,+}$ or $\omega_1=\omega_{1,-}$, as it leads to the same value, and similarly for $\omega_b$.
We estimate $z$ by using a least square procedure: we first define
$$
\chi^2(y) = \sum_{t_{\min}\le t_j \le t_{\max}}\left(\Re\left(ye^{-i\omega_1 t_j}\right) -\Re(\widehat{E}_{1,num})(t_j)\right)^2
$$
and then define $z$ by minimizing this quantity, that is, $\chi^2(z) = \min_{y\in \mathbb{C}}\chi^2(y)$, which is explicitely given by as solution of 
$$
A^TA\left[\begin{array}{c}
\Re(z)\\
\Im(z)
\end{array}\right] = A^Tb,\ A=[\Re(e^{-i\omega_1t_j})_{j};-\Im(e^{-i\omega_1t_j})_{j}],\ b = \Re(\widehat{E}_{1,num})(t_j)_{j},
$$
with $A$ a matrix given by its 2 columns and $b$ a vector, all the three vectors being indexed by $j$ that goes through all the values such that $t_{\min}\le t_j \le t_{\max}$.

Once $z$ is found, we estimate $z_1$ and $z_2$ using again a least square procedure on the remainder: defining this time
$$
\tilde{\chi}^2(y_1,y_2) = \sum_{\tilde{t}_{\min}\le t_j \le \tilde{t}_{\max}}\left(\Re\left((y_1+t_jy_2)e^{-i\omega_b t_j}\right) -\Re\left(\widehat{E}_{1,num}(t_j)-ze^{-i\omega_1t}\right)\right)^2,
$$
$z_1$ and $z_2$ are obtained  by minimizing this quantity, that is, $$\tilde{\chi}^2(z_1,z_2) = \min_{y_1,y_2\in \mathbb{C}}\tilde{\chi}^2(y_1,y_2).$$
Again the solution is explicitely given, the matrix $A$ being here
$$
A=[\Re(e^{-i\omega_bt_j})_{j};-\Im(e^{-i\omega_bt_j})_{j};\Re(t_je^{-i\omega_bt_j})_{j};-\Im(t_je^{-i\omega_bt_j})_{j}].
$$

\begin{figure}
\includegraphics[width=\linewidth]{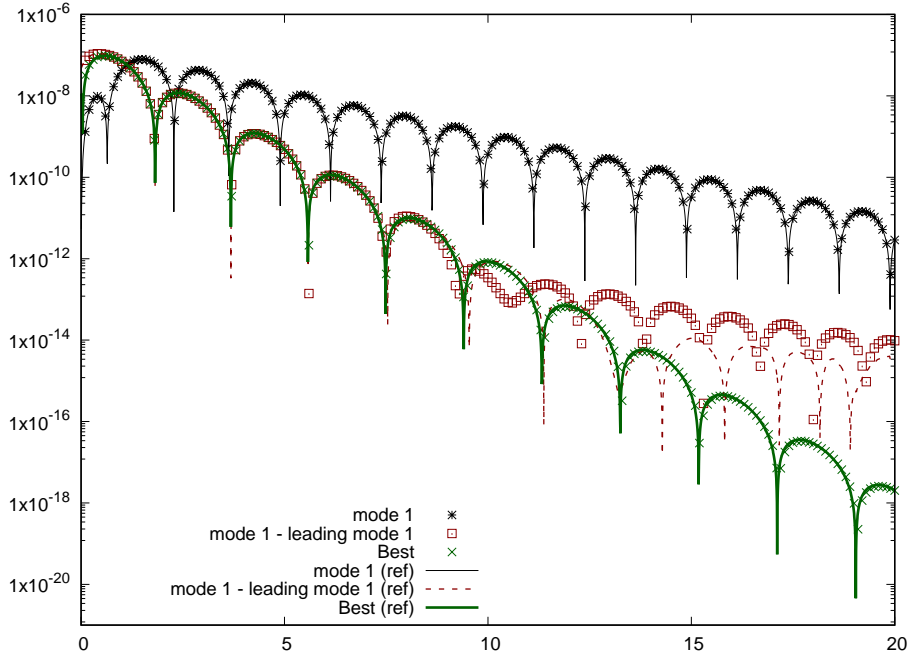}
\caption{Time evolution of $|\Re(\widehat{E}_{1,num})(t)|$ (mode 1), $|\Re\left(\widehat{E}_{1,num}(t)- ze^{-i\omega_1t}\right)|$ (mode 1 - leading mode 1) and 
$|\Re\left((z_1+t_jz_2)e^{-i\omega_b t_j}\right)|$ (Best), for coarse $128\times256\times0.1$ and refined $2048\times4096\times0.00625$ grids, the latter being referred as (ref) in the legend.
The parameters $[t_{\min},t_{\max}]=[17.5,35]$ and $[\tilde{t}_{\min},\tilde{t}_{\max}]=[1.75,17.5]$ are used for the least square procedures.}
\label{fignumbest}
\end{figure}

On Figure \ref{fignumbest}, we represent the time evolution of the real part of the first Fourier mode $|\Re(\widehat{E}_{1,num})(t)|$ in absolute value, 
together with $|\Re(\widehat{E}_{1,num})(t)- \Re\left(ze^{-i\omega_1t}\right)|$, that is the quantity where we have removed the main part
(it is a term $J$ in Theorem \ref{The_th}); the latter is compared to $|\Re\left((z_1+t_jz_2)e^{-i\omega_b t_j}\right)|$ that corresponds to the Best term. The parameters $t_{\min}$, $t_{\max}$, $\tilde{t}_{\min}$ and $\tilde{t}_{\max}$ are chosen properly so that, in the corresponding interval, the approximation is valid. Note that a too low value is not good, as the expansion is only asymptotic and we consider only one term which is the main term asymptotically. A too high value is also not good, as we have to face with the round off or numerical error and the nonlinear behavior (note that we do not solve here the second linearized equation but the full nonlinear equation). 
We observe a well agreement, which is even better, by refining the grid, so that we can claim that we have 
exhibited the Best frequency in the numerical results, which is fully coherent with the theoretical results.

\subsection{Another case where the Best frequency is almost dominant on a spatial mode}
Now we consider again $d=1$ (dimension $1$), but we change the spatial length of the domain $L_1 = 20\pi$, and take  
$$
 \ f^{eq}(v) = e^{-v^2/2},\ g_0(x,v) = \cos(x)e^{-v^2/(2\sigma_2^2)}+\cos(0.1x)e^{-v^2/(2\sigma_3^2)}
$$ and 
$\sigma_2 = 2^{1/4},\ \sigma_3 =\sqrt{\pi}/2$ and $\varepsilon=0.001$. Now the modes that are initialized are $k_1,k_2\in \{\pm 1,\pm0.1\}$. We now need to know (we already have the value of $\omega_{1,\pm}$ from the 
previous subsection)
\begin{align*}
\omega_{0.1,\pm} &\simeq \pm1.592755970+3.218848582\cdot10^{-52}i,\\
\omega_{0.2,\pm} &\simeq \pm1.621955006 -2.569883158\cdot10^{-12}i,\\
 \omega_{0.9,\pm} &\simeq \pm2.382548194-0.3594880484i,\\
 \omega_{1.1,\pm} &\simeq\pm2.639613224-0.6100786528i.
\end{align*}
The second frequency of the mode $0.9$ is $\omega_{0.9,\pm}^{(2)}\simeq\pm3.181466437 -2.102684847i$.
The possible values of  $k=k_1+k_2$ are in the set $\{\pm0.2,\pm0.9,\pm1.1,\pm2\}$. The first order expansion already gives a term that is not damped (the imaginary part is almost equal to zero).
We also have terms on the second order expansion that are not damped (for $k=\pm0.2$). Nevertheless, if one consider the mode $k=\pm0.9$, one can look at $|\Re(\widehat{E}_{0.9,num})(t)|$. From Theorem \ref{The_th},  
we look thus here for an approximation of  $\varepsilon^{-2}\Re(\widehat{E}_{0.9,num})(t)$ in the form
 $$
\mathcal{E}(t,z) =  \Re\left(z_1e^{-i\omega_{0.9}t}+(z_2t+z_3)e^{-i0.9\omega_1t}+z_4e^{-i(\omega_{1}+\omega_{0.1,-})t}+z_5e^{-i(\omega_{1}+\omega_{0.1,+})t}\right), 
$$
with $z=(z_1,z_2,z_3,z_4,z_5)\in \mathbb{C}^5$, using again $\omega_{\ell}=\omega_{\ell,+}$ or $\omega_{\ell}=\omega_{\ell,-}$, for $\ell\in \mathbb{R}$, as it leads to the same result.
In order to estimate $z$, we compute
$$
\min_{y\in \mathbb{C}^5} \sum_{t_{\min}\le t_j \le t_{\max}}\left(e^{\lambda t_j}\Re\left(\mathcal{E}(t_j,y)-\varepsilon^{-2}\widehat{E}_{0.9,num}(t_j)\right)\right)^2,
$$
that is attained for $y=z$, by using the least square method as previously. Note that we add here the weight $e^{\lambda t}$, with $\lambda=0.48$ and then we look for all the coefficients in one step.
The choice of the value of  $\lambda$ is coherent with the fact that from Theorem \ref{The_th}, the function $e^{\lambda t}\left(\varepsilon^{-2}\Re(\widehat{E}_{0.9,num})(t)-\mathcal{E}(t,z)\right)$ 
should be bounded. Numerical results are shown on Figure \ref{fignumbest2}. We use $t_{\min}=0$ and $t_{\max}=30$ for the coarse grid and  have increased $t_{\max}$ to $35$ for the fine grid
(for the fine grid, we could even increase this value, which was not possible for the coarse grid: the results were worse, as the solution is not precise enough for the coarse grid on late times, as shown on Figure \ref{fignumbest2}). 
For the fine grid, we could also not really increase further than around  $t_{\max}=50$, as we
are limited, with nonlinear effects, convergence and/or machine precision; we have also preferred not to go until $t_{\max}=50$, as it leads to a worser matching, since the least square procedure tends to match for 
values around $50$, where the matching is less good. We could also change the initial time, but it has not so much impact, as it was the case for the previous subsection, since we have added here a weight function in the 
least square procedure. 
We emphasize that we can again exhibit the Best frequency and also the two other types of frequencies, which are all in the same range, 
for this example. In order to get this results, we note that we had to adapt he strategy concerning the least square method that was presented for the first example; this is due to the fact the several modes 
are in a similar range, and  it was not easy to use the first procedure (used for the first example) to catch the different frequencies. 

\begin{figure}
\begin{center}
\includegraphics[width=0.8\linewidth]{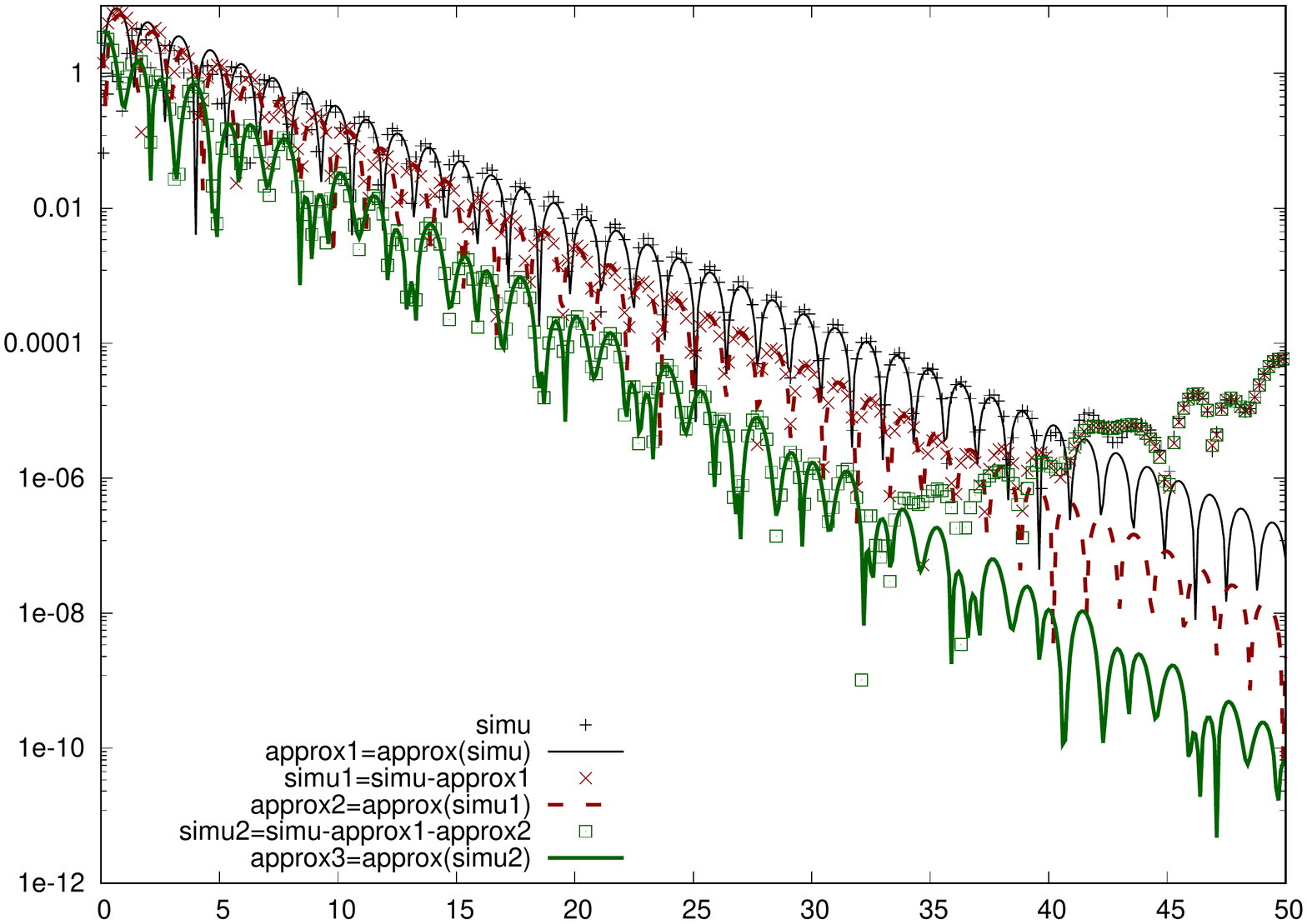}\\
\includegraphics[width=0.8\linewidth]{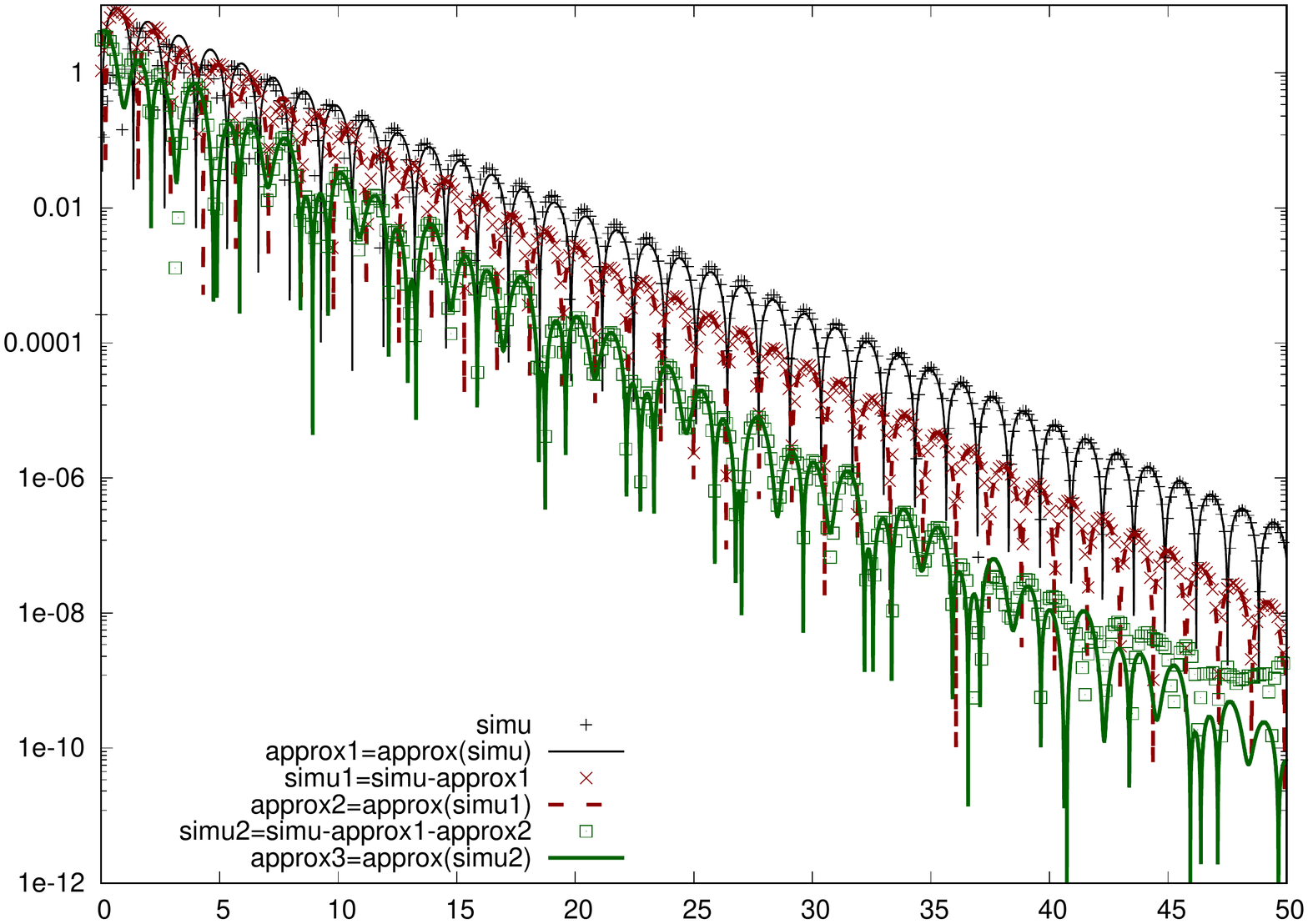}\\
\end{center}
\captionsetup{singlelinecheck=off}
\caption[foo bar]{
Time evolution of 
\begin{itemize}
\item $|\Re(\varepsilon^{-2}\widehat{E}_{1,num})(t)|$ (simu) 
vs $|z_1e^{-i\omega_{0.9}t}|$ (approx1),
\item $|\Re\left(\varepsilon^{-2}\widehat{E}_{1,num}(t)- z_1e^{-i\omega_{0.9}t}\right)|$ (simu1) 
\item[$ $] vs $|\Re\left((z_2t+z_3)e^{-i0.9\omega_1t}\right)|$ (approx2),
\item $|\Re\left(\varepsilon^{-2}\widehat{E}_{1,num}(t)- z_1e^{-i\omega_{0.9}t}-(z_2t+z_3)e^{-i0.9\omega_1t}\right)|$ 
\item[$ $] (simu2)  vs $|\Re\left(z_4e^{-i(\omega_{1}+\omega_{0.1,-})t}+z_5e^{-i(\omega_{1}+\omega_{0.1,+})t}\right)|$ (approx3),
\end{itemize}
for coarse (top) $128\times256\times0.1$ and refined (bottom) $2048\times4096\times0.00625$ grids.
The parameters for the least square procedure is $[t_{\min},t_{\max}]=[0,30]$ for the coarse grid and $[t_{\min},t_{\max}]=[0,35]$ for the refined grid.
}
\label{fignumbest2}
\end{figure}
The values of $z$ are given here for coarse and fine mesh:
\begin{align*}
z_{1,{\rm coarse}}&\simeq 1.2463 -11.578i,&\  z_{1,{\rm fine}}&\simeq 1.2183 -11.548i,\\
z_{2,{\rm coarse}}&\simeq 0.21502+0.28932i,&\  z_{2,{\rm fine}}&\simeq 0.23103+0.31652i,\\
z_{3,{\rm coarse}}&\simeq -4.2852+9.2615i,&\  z_{3,{\rm fine}}&\simeq -4.5484+8.9068i,\\
z_{4,{\rm coarse}}&\simeq 2.3853+1.2186i,&\  z_{4,{\rm fine}}&\simeq2.7369+1.1629i,\\
z_{5,{\rm coarse}}&\simeq 1.5556+1.2385i,&\  z_{5,{\rm fine}}&\simeq1.5611+1.1445i.
\end{align*}

\subsection{A $2D$ case}

\subsubsection{Looking for Best frequencies in 2D}

Finally, we focus on a $2D$ case. Here we can write $k=k_1+k_2$ with
$$
k_j= \left(m_j\frac{2\pi}{L_1},n_j\frac{2\pi}{L_2}\right),\ j=1,2,\ m_j,n_j\in \mathbb{Z}.
$$ 

Now if $k=\gamma k_1$, with $\gamma\in (0,1)$, we get:
$$
m_1+m_2=\gamma m_1,\ n_1+n_2=\gamma n_1, 
$$
which leads to
$$
1-\gamma = -\frac{m_2}{m_1} = -\frac{n_2}{n_1},
$$
if $m_1\not=0$ and $n_1\not=0$. If $m_1$ or $m_2=0$, we get $m_1=m_2=0$, and similarly for $n_1$ and $n_2$. In order to have a "real" $2D$ case, we can suppose that $m_1\not=0$ and $n_1\not=0$.
We have
$$
\frac{-m_2}{m_1} =  \frac{-n_2}{n_1}={1-\gamma} = \frac{p}{q},\ p,q\in \mathbb{N},\ p<q, \ p\wedge q =1.
$$
So we obtain $-m_2q = pm_1$, and thus $m_1=\ell q$, $\ell\in \mathbb{Z}^*$ and $-m_2=\ell p$ together with $n_1=\tilde{\ell} q$, $\tilde{\ell}\in \mathbb{Z}^*$ and $-n_2=\tilde{\ell} p$.
Note that we then have $k\cdot k_1 = \gamma |k_1|^2\not=0$.

\subsubsection{A $2D$ test case with Best frequency}

We choose here $L_1=L_2=L$, $m_1=n_1=3,\ m_2=n_2=-2$, so that 
$$
k_1 = (3,3)\frac{2\pi}{L},\ k_2 = (-2,-2)\frac{2\pi}{L},\ k_1+k_2=(1,1)\frac{2\pi}{L},
$$
and
$$
|k_1| = 3\sqrt{2}\frac{2\pi}{L},\ |k_2| = 2\sqrt{2}\frac{2\pi}{L},\ |k_1+k_2| = \sqrt{2}\frac{2\pi}{L}.
$$

We will write $\omega_{\ell}$, instead of $\omega_{\ell,+}$ or $\omega_{\ell,-}$, when we can either use $\omega_{\ell,+}$ or $\omega_{\ell,-}$. We will need for this subsection and the next one, the following values
(note that the values are here not the same as in the one dimensional case, since the dispersion relation is not the same, as we have considered here a normalized Maxwellian):

\begin{align*}
\omega_{\sqrt{2}/10,\pm} &\simeq \pm1.030839024 - 6.410202539\cdot10^{-10}i,\\
\omega_{2\sqrt{2}/10,\pm} &\simeq \pm1.140206800 - 0.007780445579i,\\
 \omega_{3\sqrt{2}/10,\pm} &\simeq \pm1.316627173 - 0.08467369148i,\\
 \omega_{\sqrt{5}/10,\pm} &\simeq\pm1.081943401 - 0.0004485284614i,\\
 \omega_{\sqrt{13}/10,\pm} &\simeq\pm1.234323666  - 0.04025247555i.
\end{align*}

Also, the second frequency of the mode $\sqrt{2}/10$ is $\omega_{\sqrt{2}/10,\pm}^{(2)}\simeq\pm0.5196579915 -0.2520173386i$.
%
%
%
%

The main frequencies that intervene on the spatial mode $(1,1)\frac{2\pi}{L}$ are $\omega_{\sqrt{2}\frac{2\pi}{L},\pm}$, and $\frac{\omega_{3\sqrt{2}\frac{2\pi}{L},\pm}}{3}$ (the last one is the Best frequency).
In that case, we expect a similar behavior as the test case of the first subsection.

We use here $f^{eq}(v) = \frac{1}{2\pi}e^{-v_1^2/2-v_2^2/2}$, with $x=(x_1,x_2),\ v=(v_1,v_2)$, together with
$$
g_0(x,v) = \left(\cos(0.3x_1+0.3x_2)+\cos(0.2x_1+0.2x_2)\right)f^{eq}(v),
$$
taking $L=20\pi$. We solve again numerically \eqref{VP_eq} with a Semi-Lagrangian scheme and an adapted $6$-th order splitting \cite{Casas2017} (here $d=2$).
The parameter $\varepsilon$ is always fixed to $\varepsilon=10^{-3}$.

We take $v_{1},v_2\in [-6,6]$, $32$ cells in $x_1$ and $x_2$ directions, $64$ cells in $v_1$ and $v_2$ directions; time step is fixed to $\Delta t=0.1$, leading to a
$32\times32\times64\times64\times0.1$  grid. The diagnostics are here obtained form the charge density $\rho(t,x_1,x_2) = \int_{\mathbb{R}^2} f{\rm d}v$ (computed from trapezoidal rule):
we define $\widehat{\rho}_{\ell_1,\ell_2,num}(t)$ the Discrete Fourier Transform of the charge density at time $t=t_n=n\Delta t$. Results are given on Figure \ref{fignumbest3}. 
The least square procedure is here applied to minimize:
$$
\min_{y\in \mathbb{C}^3} \sum_{t_{\min}\le t_j \le t_{\max}}\left(e^{\lambda t_j}\Re\left(\mathcal{E}(t_j,y)-\varepsilon^{-2}\widehat{\rho}_{1,1,num}(t_j)\right)\right)^2,
$$
with
 $$
\mathcal{E}(t,y) =  \Re\left(y_1e^{-i\omega_{\sqrt{2}/10}t}+(y_2t+y_3)e^{-i\frac{1}{3}\omega_{3\sqrt{2}/10}t}
\right), 
$$
and is attained for $y_j=z_j,\ j=1,2,3$, where the $z_j$ are given in Figure \ref{fignumbest3}.
We clearly see on Figure \ref{fignumbest3}, that the Best frequency $\frac{\omega_{3\sqrt{2}\frac{2\pi}{L}}}{3}$ is needed: 
with the combination of the main frequency $\omega_{\sqrt{2}\frac{2\pi}{L}}$
the simulated mode $\widehat{\rho}_{1,1,num}$ is accurately asymptotically described. In that case, we see both frequencies are useful; the main frequency is not enough as we can see it on Figure \ref{fignumbest3}.
Indeed both modes (main and Best) are shown (they are shifted towards bottom of the Figure in order to see them better), and we see that the \emph{combination} of the modes is needed to describe
the simulated mode.   
\subsubsection{A $2D$ test case without Best frequency}
Now, if we change and take $n_1=2,\ n_2=-1$, we have no more Best frequency, and the main  frequencies that intervene on the same spatial mode $(1,1)\frac{2\pi}{L}$ are
$\omega_{\sqrt{2}\frac{2\pi}{L},\pm}$ and $\omega_{\sqrt{13}\frac{2\pi}{L},\pm}+\omega_{\sqrt{5}\frac{2\pi}{L},\pm}$ (these frequencies were defined in the previous subsection), as we have this time
$$
k_1 = (3,2)\frac{2\pi}{L},\ k_2 = (-2,-1)\frac{2\pi}{L},\ k_1+k_2=(1,1)\frac{2\pi}{L},
$$
and
$$
|k_1| = \sqrt{13}\frac{2\pi}{L},\ |k_2| = \sqrt{5}\frac{2\pi}{L},\ |k_1+k_2| = \sqrt{2}\frac{2\pi}{L},
$$
the initial data being changed to
$$
g_0(x,v) = \left(\cos(0.3x_1+0.2x_2)+\cos(0.2x_1+0.1x_2)\right)f^{eq}(v),
$$
and we have still $L=20\pi$. For the least square procedure, we consider the minimization problem

$$
\min_{y\in \mathbb{C}^3} \sum_{t_{\min}\le t_j \le t_{\max}}\left(e^{\lambda t_j}\Re\left(\mathcal{E}(t_j,y)-\varepsilon^{-2}\widehat{\rho}_{1,1,num}(t_j)\right)\right)^2,
$$
with
 $$
\mathcal{E}(t,y) =  \Re\left(y_1e^{-i\omega_{\sqrt{2}/10}t}+y_2e^{-i(\omega_{\sqrt{5}/10,+}+\omega_{\sqrt{13}/10,-})t}+y_3e^{-i(\omega_{\sqrt{5}/10,+}+\omega_{\sqrt{13}/10,+})t}
\right), 
$$
attained for $y_j=z_j,\ j=1,2,3$, where the $z_j$ are given in Figure \ref{fignumbest4}. We remark here that we have some unexpected frequency at the beggining which might be interpreted as a Best frequency (the simu-first approx curve),
but such one is damped and we get the right asymptotic behavior, which shows that we cannot get a Best frequency in the asymptotic limit, which is fully consistant with Theorem \ref{The_th}.



\begin{figure}
\begin{center}
\includegraphics[width=0.9\linewidth]{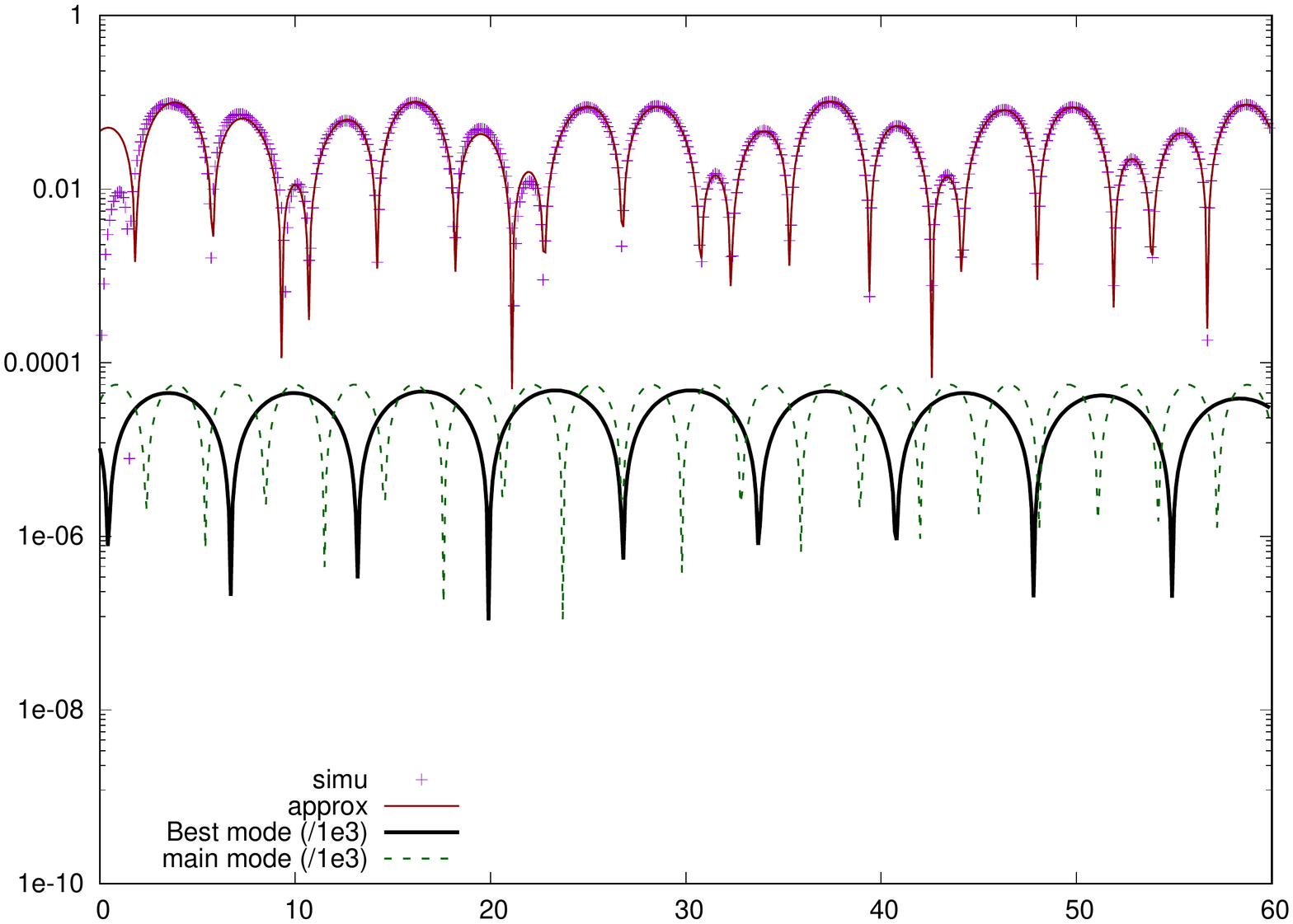}\\
\end{center}
\captionsetup{singlelinecheck=off}
\caption[foo bar]{
A $2D$-case with Best frequency: 
time evolution of 
\begin{itemize}
\item $|\Re(\varepsilon^{-2}\widehat{\rho}_{1,1,num})(t)|$ (simu) 
\item $|\Re\left(z_1e^{-i\omega_{\sqrt{2}/10}t}+(z_2t+z_3)e^{-i\frac{1}{3}\omega_{3\sqrt{2}/10}t}\right)|$ (approx) 
\item $10^{-3}|\Re\left(z_1e^{-i\omega_{\sqrt{2}/10}t}\right)|$ (main mode /1e3)
\item $10^{-3}|\Re\left((z_2t+z_3)e^{-i\frac{1}{3}\omega_{3\sqrt{2}/10}t}\right)|$ (Best mode /1e3)
\end{itemize}
}
The parameters $\lambda=0.09$ and $[t_{\min},t_{\max}]=[0,60]$ are used for the least square procedure to fit (simu) by (approx) and leads to $z_1\simeq 0.036159+0.042602i$, 
$z_2\simeq  -0.0031761-0.00089598i$ and $z_3\simeq 0.010351-0.046355i$.
\label{fignumbest3}
\end{figure}


\begin{figure}
\begin{center}
\includegraphics[width=0.9\linewidth]{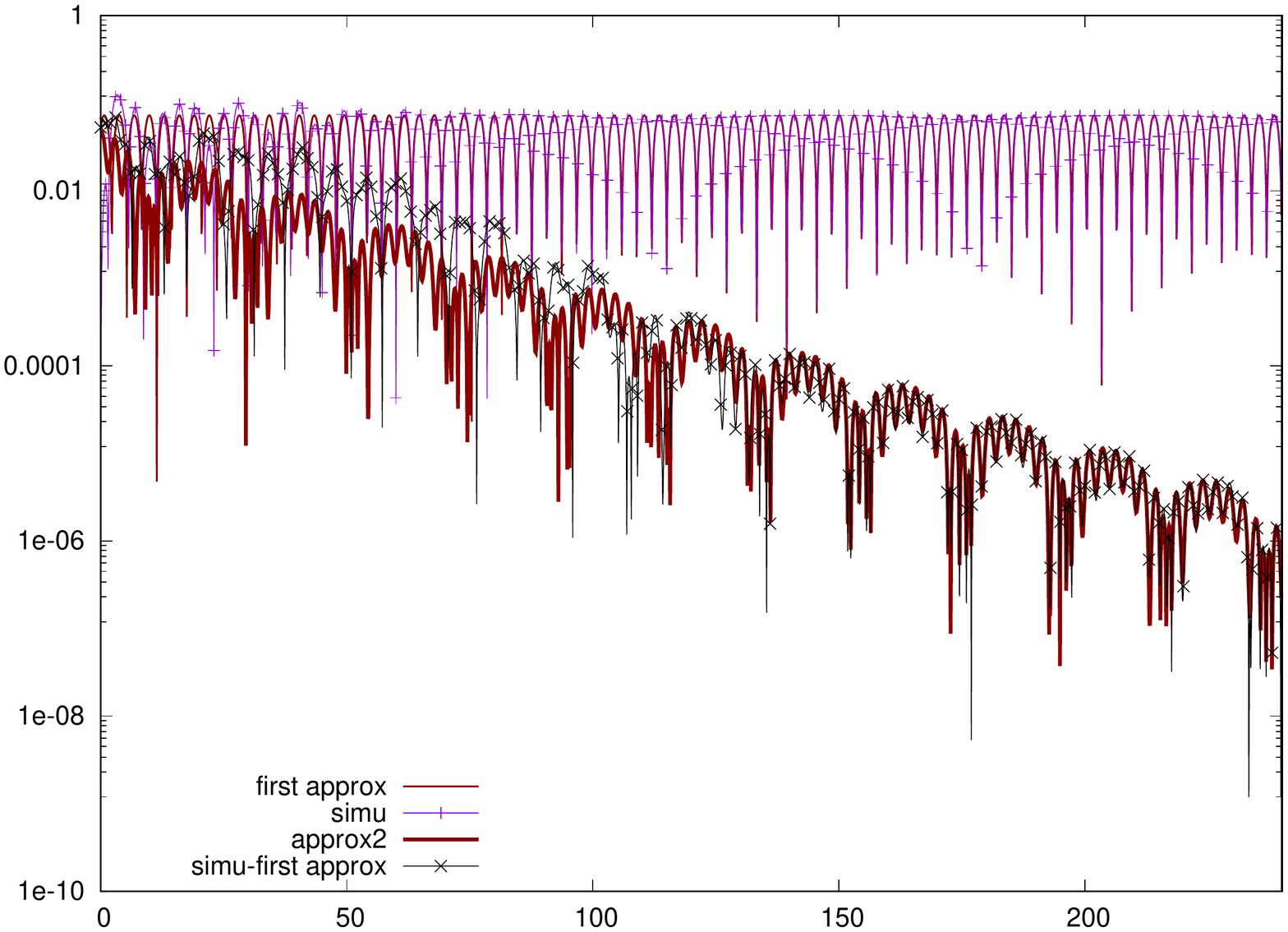}\\
\end{center}
\captionsetup{singlelinecheck=off}
\caption[foo bar]{
A $2D$-case without Best frequency: 
time evolution of 
\begin{itemize}
\item $|\Re(\varepsilon^{-2}\widehat{\rho}_{1,1,num})(t)|$ (simu) 
\item $|\Re\left(z_1e^{-i\omega_{\sqrt{2}/10}t}\right)|$ (first approx) 
\item $|\Re\left(\varepsilon^{-2}\widehat{\rho}_{1,1,num}(t)-z_1e^{-i\omega_{\sqrt{2}/10}t}\right)|$ (simu - first approx)
\item $|\Re\left(z_2e^{-i(\omega_{\sqrt{5}/10,+}+\omega_{\sqrt{13}/10,-})t}+z_3e^{-i(\omega_{\sqrt{5}/10,+}+\omega_{\sqrt{13}/10,+})t}\right)|$ (approx2)
\end{itemize}
}
The parameters $\lambda=0.05$ and $[t_{\min},t_{\max}]=[0,240]$ are used for the least square procedure leads to $z_1\simeq 0.052836+0.049810i$, 
$z_2\simeq  -0.032921-0.0010657i$ and $z_3\simeq -0.013703-0.0050901i$.
\label{fignumbest4}
\end{figure}

\section{Appendix}
\subsection{Some remarks about space $\mathscr{E}(\mathbb{R}^d)$}
\label{Appendix1} The aim of this subsection is to present some tools to construct explicit examples of functions of $\mathscr{E}(\mathbb{R}^d)$ (characterized by \eqref{def_Erd}).

\medskip

The {\it Gelfand-Shilov} spaces $S_{\alpha}^{\beta}(\mathbb{R}^d)$ provide many useful examples of functions of $\mathscr{E}(\mathbb{R}^d)$. They are defined, for $\alpha,\beta>0$, by
\begin{multline*}  S_{\alpha}^{\beta}(\mathbb{R}^d) := \{ f\in \mathscr{S}(\mathbb{R}^d) \ | \ \exists \varepsilon,C>0,\forall v,\xi\in \mathbb{R}^d, \ |f(v)|\leq Ce^{-\epsilon |v|^{\frac1{\alpha}}}\\ \textrm{ and } |\Four f(\xi)|\leq C e^{-\epsilon |\xi|^{\frac1{\beta}}}    \} .\end{multline*}
Many details about these spaces can be found in \cite{MR2668420}, in particular these spaces are stable by multiplication by a polynomial or a trigonometric polynomial, derivation and the natural action of the affine group of $\mathbb{R}^d$. Furthermore, we obviously have $\Four  S_{\alpha}^{\beta}(\mathbb{R}^d)= S_{\beta}^{\alpha}(\mathbb{R}^d)$.
\begin{proposition}
 If $\nu\in (0,1)$, then $S_{\nu}^{1-\nu}(\mathbb{R}^d)\subset \mathscr{E}(\mathbb{R}^d)$.
\end{proposition}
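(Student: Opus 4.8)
The plan is to show directly that every $f \in S_\nu^{1-\nu}(\mathbb{R}^d)$ satisfies the defining conditions of $\mathscr{E}(\mathbb{R}^d)$ given in \eqref{def_Erd}, namely that $\Four f$ extends to an entire function on $\mathbb{C}^d$ and that there is an opening angle $\alpha \in (0,\tfrac{\pi}{2})$ on which the complexified transform decays faster than any exponential. Since $\Four S_\nu^{1-\nu}(\mathbb{R}^d) = S_{1-\nu}^{\nu}(\mathbb{R}^d)$, writing $g = \Four f$ we have $g \in \mathscr{S}(\mathbb{R}^d)$ with $|g(\xi)| \le C e^{-\epsilon |\xi|^{1/(1-\nu)}}$ and $|\Four g(v)| = |(2\pi)^d f(-v)| \le C' e^{-\epsilon |v|^{1/\nu}}$ for suitable $\epsilon, C, C' > 0$. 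The first step is to observe that the bound $|\Four g(v)| \le C' e^{-\epsilon|v|^{1/\nu}}$ with $1/\nu > 1$ forces $\Four g$ to be integrable against $e^{|v \cdot w|}$ for every $w \in \mathbb{C}^d$, so that
\[
g(z) = (2\pi)^{-d} \int_{\mathbb{R}^d} \Four g(v)\, e^{i v \cdot z}\, \mathrm{d}v
\]
converges for all $z \in \mathbb{C}^d$ and defines an entire extension of $g = \Four f$ by Morera/differentiation under the integral. This gives the entire-extension requirement.

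The second step is to estimate $|g(e^{i\theta} x)|$ for $x \in \mathbb{R}^d$ and small $\theta$. The natural tool is a Phragmén–Lindelöf / contour-shift argument: since $g$ decays super-exponentially on $\mathbb{R}^d$ (rate $|\xi|^{1/(1-\nu)}$ with exponent $1/(1-\nu) > 1$) and is entire of a controlled order, one shifts the real axis in the direction $e^{i\theta}$. Concretely, for each coordinate direction one moves the contour of integration defining $g$ (or one applies the standard fact that a function in the Gelfand–Shilov class $S_{1-\nu}^{\nu}$ extends holomorphically to a sector with an estimate of the same Gaussian-type order in the real part). The key quantitative point: decay of $g$ on $\mathbb{R}^d$ at rate $\exp(-\epsilon|\xi|^{1/(1-\nu)})$ combined with growth of $\Four g$ at rate $\exp(+\epsilon|v|^{1/\nu})$ and the conjugate relation $\tfrac{1}{1-\nu} \cdot \tfrac{1}{\text{(dual)}}$ — here one uses that the Legendre-type duality between the exponents $1/\nu$ and $1/(1-\nu)$ is exactly what makes the sector nonempty — yields a bound of the form
\[
\sup_{x \in \mathbb{R}^d} \sup_{\theta \in (-\beta,\beta)} e^{\lambda |x|} |g(e^{i\theta} x)| < \infty
\]
for all $\lambda \in \mathbb{R}$, provided $\beta$ is smaller than some $\alpha(\nu) \in (0,\tfrac{\pi}{2})$ depending only on $\nu$. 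This is precisely \eqref{def_Erd}.

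I expect the main obstacle to be the second step: making the contour-shift estimate uniform in $x$ and in $\theta \in (-\beta,\beta)$ while retaining the arbitrary exponential weight $e^{\lambda|x|}$, and in particular extracting the correct admissible angle $\alpha$ from the pair of exponents $(1/\nu, 1/(1-\nu))$. The cleanest route is probably to invoke the known structural theory of Gelfand–Shilov spaces — the characterization of $S_a^b$ in terms of holomorphic extension to a sector with prescribed growth (see \cite{MR2668420}) — which directly gives that $f \in S_\nu^{1-\nu}$ has Fourier transform holomorphic on a sector $\Sigma_{\alpha}$-type region with $\exp(-\epsilon|\Re z|^{1/(1-\nu)})$-type decay there; since $1/(1-\nu) > 1$, such decay dominates $e^{-\lambda|x|}$ for every $\lambda$, and the half-angle of that sector is an explicit increasing function of $\nu$ lying in $(0,\tfrac{\pi}{2})$, which we then shrink to any $\beta < \alpha$ as \eqref{def_Erd} requires. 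The remaining verifications (entirety, the supremum being finite for each fixed $\lambda$) are then routine, and the proposition follows.
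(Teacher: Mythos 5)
Your proposal arrives at the same place as the paper, which disposes of the proposition in a single line by citing Proposition $6.1.8$ of \cite{MR2668420} --- exactly the holomorphic-extension characterization of Gelfand--Shilov classes that you fall back on at the end. Your first step (entirety of $\Four f$ via Fourier inversion, since $\Four[\Four f](v) = (2\pi)^d f(-v)$ decays like $e^{-\epsilon|v|^{1/\nu}}$ with $1/\nu > 1$) is a clean, correct way to make the entirety explicit.

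One point in your second step needs correcting. For $g = \Four f \in S_{1-\nu}^{\nu}$ the relevant characterization reads
\[
|g(x+iy)| \le C\,e^{-a|x|^{1/(1-\nu)} + b|y|^{1/(1-\nu)}},
\]
with the \emph{same} power $1/(1-\nu)$ governing both the decay in $\Re z$ and the allowed growth in $\Im z$. This coincidence of exponents is exactly the reflection of $\alpha + \beta = (1-\nu) + \nu = 1$; along a ray $z = e^{i\theta}u$, $u \in \mathbb{R}^d$, it gives
\[
|g(e^{i\theta}u)| \le C\,e^{\left[-a(\cos\theta)^{1/(1-\nu)}+b(\sin\theta)^{1/(1-\nu)}\right]|u|^{1/(1-\nu)}},
\]
and the bracket is strictly negative as soon as $\tan\theta < (a/b)^{1-\nu}$, so a nontrivial sector exists and, since $1/(1-\nu) > 1$, the resulting super-exponential decay absorbs every $e^{\lambda|u|}$. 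Your explanation instead pits $1/\nu$ against $1/(1-\nu)$ and appeals to a ``Legendre-type duality''; that is not the operative mechanism, and if those two different powers really were in competition, the larger one would win and no sector of decay would emerge. A further caution: a bare triangle inequality on the inversion integral $g(z) = (2\pi)^{-d}\int \Four g(v)\,e^{iv\cdot z}\,\mathrm{d}v$ does \emph{not} give decay on a sector --- a Laplace-method estimate shows it produces growth of order $\exp\bigl(c(|\sin\theta|\,|u|)^{1/(1-\nu)}\bigr)$ --- so the structural / Phragm\'en--Lindel\"of ingredient you gesture at is genuinely required, which is why it is cleanest simply to cite the proposition from \cite{MR2668420}, as the paper does.
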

\begin{proof}
It is a direct corollary of Proposition $6.1.8$ of \cite{MR2668420}.
\end{proof}
\begin{example} $\empty$
\begin{itemize}
\item $|v|^2 \cos(v_1-v_2) e^{-v_1^2 - (v_1+v_2)^2}\in S_{\frac12}^{\frac12}(\mathbb{R}^2) \subset \mathscr{E}(\mathbb{R}^2)$,
\item If $k\in \mathbb{N}^*$ then $e^{-v^{2k}}\in S_{\frac{1}{2k}}^{{1-\frac1{2k}}}(\mathbb{R})\subset \mathscr{E}(\mathbb{R})$ (see \cite{MR2668420}).
\end{itemize}
\end{example}

\medskip

To get some other example, we remark that $\mathscr{E}(\mathbb{R}^d)$ is clearly stable by multiplication by a trigonometric polynomial, derivation and the natural action of the affine group of $\mathbb{R}^d$. Furthermore, it enjoys the following tensor product property.
\begin{proposition}
If $d_1,d_2 \in \mathbb{N}^*$ then $\mathscr{E}(\mathbb{R}^{d_1})\otimes \mathscr{E}(\mathbb{R}^{d_2}) \subset \mathscr{E}(\mathbb{R}^{d_1+d_2})$.
\end{proposition}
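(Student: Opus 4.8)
The plan is to verify directly the three defining properties of $\mathscr{E}(\mathbb{R}^{d_1+d_2})$ for the tensor product $f=f_1\otimes f_2$, i.e. $f(v_1,v_2)=f_1(v_1)f_2(v_2)$ with $f_j\in\mathscr{E}(\mathbb{R}^{d_j})$. First, $f\in\mathscr{S}(\mathbb{R}^{d_1+d_2})$: this is the classical fact that a product of Schwartz functions in complementary groups of variables is again Schwartz (one bounds $|v^a\partial^b f|$ by the product of the corresponding seminorms of $f_1$ and $f_2$). Second, by Fubini one has $\Four f(\xi_1,\xi_2)=\Four f_1(\xi_1)\,\Four f_2(\xi_2)$ on $\mathbb{R}^{d_1+d_2}$; since $\Four f_1$ and $\Four f_2$ extend to entire functions on $\mathbb{C}^{d_1}$ and $\mathbb{C}^{d_2}$ respectively, the map $(z_1,z_2)\mapsto \Four f_1(z_1)\,\Four f_2(z_2)$ is entire on $\mathbb{C}^{d_1+d_2}=\mathbb{C}^{d_1}\times\mathbb{C}^{d_2}$ and furnishes the required holomorphic extension of $\Four f$.

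The only point deserving care is the exponential bound \eqref{def_Erd}. Let $\alpha_j\in(0,\tfrac\pi2)$ be the constant attached to $f_j$ and set $\alpha=\min(\alpha_1,\alpha_2)$. Fix $\beta\in(0,\alpha)$ and $\lambda\in\mathbb{R}$. Writing $x=(x_1,x_2)\in\mathbb{R}^{d_1}\times\mathbb{R}^{d_2}$ one has $e^{i\theta}x=(e^{i\theta}x_1,e^{i\theta}x_2)$, hence
\[ \Four f(e^{i\theta}x)=\Four f_1(e^{i\theta}x_1)\,\Four f_2(e^{i\theta}x_2). \]
Using the elementary two-sided comparison $\tfrac12(|x_1|+|x_2|)\le |x|=\sqrt{|x_1|^2+|x_2|^2}\le |x_1|+|x_2|$ one finds $\lambda'\in\mathbb{R}$ (take $\lambda'=\lambda$ if $\lambda\ge 0$ and $\lambda'=\lambda/2$ if $\lambda<0$) such that $e^{\lambda|x|}\le e^{\lambda'|x_1|}e^{\lambda'|x_2|}$ for all $x$, so that
\[ e^{\lambda|x|}|\Four f(e^{i\theta}x)|\le\Big(e^{\lambda'|x_1|}|\Four f_1(e^{i\theta}x_1)|\Big)\Big(e^{\lambda'|x_2|}|\Four f_2(e^{i\theta}x_2)|\Big). \]
Taking the supremum over $x$ and $\theta\in(-\beta,\beta)$, and using that the supremum of a product of nonnegative quantities is bounded by the product of the suprema (allowing independent angles in the two factors), gives
\[ \sup_{x}\sup_{\theta\in(-\beta,\beta)}e^{\lambda|x|}|\Four f(e^{i\theta}x)|\ \le\ \prod_{j=1}^{2}\ \sup_{x_j}\sup_{\theta\in(-\beta,\beta)}e^{\lambda'|x_j|}|\Four f_j(e^{i\theta}x_j)|. \]
Since $\beta<\alpha\le\alpha_j$ and $\lambda'$ is an arbitrary real, each factor on the right is finite because $f_j\in\mathscr{E}(\mathbb{R}^{d_j})$; this establishes \eqref{def_Erd} for $f$ with constant $\alpha$, completing the proof.

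There is no real obstacle: the argument is pure bookkeeping once the right splitting is identified. The single subtlety is that the canonical Hermitian norm on $\mathbb{C}^{d_1+d_2}$ is not additive over the two coordinate blocks, so the reduction from $e^{\lambda|x|}$ to a product of exponentials in $|x_1|$ and $|x_2|$ must go through the comparison $\tfrac12(|x_1|+|x_2|)\le|x|\le|x_1|+|x_2|$, which is why the sign of $\lambda$ has to be distinguished; everything else reduces to the standard stability of the Schwartz class and of entire functions under tensor products.
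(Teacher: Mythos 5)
The paper states this proposition without proof, so there is no argument of the authors' to compare against. Your proof is correct and complete: the factorization of the Fourier transform under tensor product yields the entire extension, the choice $\alpha=\min(\alpha_1,\alpha_2)$ is the right one, and the two-sided comparison $\tfrac12(|x_1|+|x_2|)\le|x|\le|x_1|+|x_2|$ together with the sign-dependent choice of $\lambda'$ is exactly what is needed to reduce the weight $e^{\lambda|x|}$ to a product; the estimate $\sup(ab)\le(\sup a)(\sup b)$ for nonnegative quantities then closes the argument. One may also note, since $\otimes$ denotes the algebraic tensor product, that the reduction to pure tensors is legitimate because $\mathscr{E}(\mathbb{R}^{d_1+d_2})$ is a linear subspace (with $\alpha$ taken as the minimum of the two constants when summing), a point worth stating but entirely routine.
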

\begin{example} $\partial_{v_1} e^{-v_1^4 - (v_1-3v_2)^2} \in \mathscr{E}(\mathbb{R}^2)$.
\end{example}
\subsection{An algebraic decomposition}
\label{Appendix2} The aim of this subsection is to prove Lemma \ref{lem_algb_dec}. 

\begin{proof}[Proof of Lemma \ref{lem_algb_dec}]
If $\omega=0$, we get the result by expanding the polynomial $(t-s)^n$. So we suppose now that $\omega\not=0$.
Since we recognize a convolution product, we apply a Laplace transform. So we get
\[ \Lap \left[ \int_0^t e^{i\omega s} s^m (t-s)^n \textrm{d}s \right] (z) = \Lap \left[ t^m \right] (z+\omega) \Lap \left[ t^n \right] (z) = \frac{n!m! (-i)^{n+m+2}}{(z+\omega)^{m+1}z^{n+1}}.   \]
We can apply a partial fraction decomposition to get some complex coefficients $(a_j)_{j=0,\dots,n}$ and $(b_j)_{j=0,\dots,m}$ such that
\[ \frac{n!m! (-i)^{n+m+2}}{(z+\omega)^{m+1}z^{n+1}} = \sum_{j=0}^{n} \frac{a_j}{z^{j+1}} + \sum_{j=0}^{m} \frac{b_j}{(z+\omega)^{j+1}}.    \]
Consequently, we have
\[ \Lap \left[ \int_0^t e^{i\omega s} s^m (t-s)^n \textrm{d}s \right] (z)  = \Lap \left[ \sum_{j=0}^{n} \frac{a_j i^{j+1}}{j!} t^j + \sum_{j=0}^{m} \frac{b_j i^{j+1}}{j!} t^j e^{i\omega t}  \right] (z), \]
Since the Laplace transform characterizes the continuous functions with an exponential order (see Theorem $1.7.3$ in \cite{MR2798103}), we have proved the lemma.
\end{proof}

\subsection{Computation of the zeros} \label{App:Computation of the zeros}
We have used the following Maple code to compute the zeros of $D_k$. We recall from Lemma \ref{Hilb_to_Four} that for $\Im(z)>0$
\begin{equation*}
\begin{split}
D_k(z) &= 1- \frac1{|k|^2} \int \frac{k\cdot \nabla_v f^{eq}(v)}{v\cdot k-z} \textrm{d}v 
= 1- \frac i{|k|^2}\Lap\left[ \Four [k\cdot\nabla_v f^{eq}(v)](kt) \right](z)\\
&=1- \frac i{|k|^2}\Lap\left[ ik\cdot(kt)\Four [f^{eq}(v)](kt) \right](z)=1+\Lap\left[ t\Four [f^{eq}(v)](kt) \right](z)\\
&=1+\int_0^\infty t\Four[f^{eq}(v)](kt)e^{izt}{\textrm{d}}t = 1+\int_0^\infty t\int_{\mathbb{R}^d}f^{eq}(v)e^{itk\cdot v}{\textrm{d}} v e^{izt}{\textrm{d}} t.
\end{split}
\end{equation*}
We can write $v=v_\parallel+v_\perp$, with $v_\parallel$ the component of $v$ along $k$ and  $v_\perp$ perpendicular to $k$,(when $d\ge 2$), so that
$$
D_k(z)  = 1+\int_0^\infty t\int_{(\mathbb{R}^d)_\parallel}\left(\int_{(\mathbb{R}^d)_\perp}f^{eq}(v_\parallel+v_\perp){\textrm{d}}v_\perp\right)e^{itk\cdot v_\parallel}{\textrm{d}} v_\parallel e^{izt}{\textrm{d}} t.
$$
\begin{remark}
\label{remDk}
Note that $D_{-k}(z)=D_k(z)$, if $\Four [f^{eq}(v)]$ is an even function.
\end{remark}

\begin{verbatim}
with(inttrans):
with(RootFinding):
Digits:=20:
feq:=exp(-((v)^2)/2);
#the space mode
k:=1.;
#Fourier transform of the equilibrium
Tfeq:=fourier(feq,v,t):
#the analytic function
Dk:=1+int(t*subs(t=k*t,Tfeq)*exp(I*om*t),t=0..infinity):
#the time modes
l:=sort([Analytic(Dk,om,re=-8..8,im=-8..8)],(a,b)->Im(a)>Im(b));
\end{verbatim}

\medskip
Received xxxx 20xx; revised xxxx 20xx.
\medskip

\end{document}